\newtheorem{dfn}{Definition}[section]
\newtheorem{thm}[dfn]{Theorem}
\newtheorem{pro}[dfn]{Proposition}
\newtheorem{lem}[dfn]{Lemma}
\theoremstyle{definition}
\newtheorem{que}[dfn]{Question}
\newcommand{\co}{\colon}
\newcommand{\mb}{\mathbb}
\newcommand{\mr}{\mathrm}
\newcommand{\ol}{\overline}
\newcommand{\mac}{\mathcal}
\newcommand{\tb}{\textbf}
\newcommand{\ep}{\emph}
\newcommand{\X}{X_1,\ldots,X_{n+1}}
\newcommand{\rmc}{{\rm char}}
\newcommand{\da}{\dashrightarrow}
\title[Subgroups Realized by wild Galois Points]{Subgroups of the Projective Linear Group Realized by wild Galois Points}
\author{Taro Hayashi}
\author{Kashu Ito}
\author{Atsuya Nakajima}
\author{Keika Shimahara}
\address{(Taro Hayashi)
Department of Mathematical Sciences,
Ritsumeikan University,
1$-$1$-$1 Nojihigashi, Kusatsu, Shiga, 525$-$8577, Japan}
\email{haya4taro@gmail.com}
\address{(Kashu Ito)
Graduate School of Mathematical Sciences,
Ritsumeikan University,
1$-$1$-$1 Nojihigashi, Kusatsu, Shiga, 525$-$8577, Japan}
\email{ra0118vf@ed.ritsumei.ac.jp}
\address{(Atsuya Nakajima)
Graduate School of Mathematical Sciences,
Ritsumeikan University,
1$-$1$-$1 Nojihigashi, Kusatsu, Shiga, 525$-$8577, Japan}
\email{rp0111hf@ed.ritsumei.ac.jp}
\address{(Keika Shimahara)
Graduate School of Mathematical Sciences,
Ritsumeikan University,
1$-$1$-$1 Nojihigashi, Kusatsu, Shiga, 525$-$8577, Japan}
\email{ra0134hp@ed.ritsumei.ac.jp}
\date{\today}
\subjclass{Primary 14J70; Secondary 14G17, 20G15}
\keywords{Projective hypersurface; Galois cover; Wildly ramified}
\begin{document}
\maketitle
\begin{abstract}
We work over an algebraically closed field of positive characteristic. This paper investigates linear representations of Galois groups arising from wild Galois points on projective hypersurfaces. We prove that these Galois groups lift to the general linear group and act naturally on vector spaces. Furthermore, we establish necessary and sufficient conditions for subgroups of the projective linear group to be realized as Galois groups of wild Galois points. In addition, we show that projections from wild Galois points on normal hypersurfaces are necessarily wildly ramified. We provide a geometric criterion for detecting wild ramification via the fixed loci of birational automorphisms, linking group-theoretic properties to the geometry of the hypersurface.
\end{abstract}
\section{Introduction}
Let $k$ be an algebraically closed field, and
let \( \rmc(k) \) denote  the characteristic of $k$.
Let \( \mathrm{PGL}(m,k) \) denote the projective linear group over \( k \), i.e., the quotient of \( \mathrm{GL}(m,k) \) by the scalar subgroup \( \{ aI_m \mid a \in k^\times \} \) where \( I_m \) is the identity matrix. For \( A \in \mathrm{GL}(m,k) \), we denote its equivalence class in \( \mathrm{PGL}(m,k) \) by \( [A] \). We write \( e := [I_m] \in \mathrm{PGL}(m,k) \). Let 
\[
pr \colon \mathrm{GL}(m,k)\ni A\mapsto [A]\in \mathrm{PGL}(m,k)
\] 
be the natural projection, which is a group homomorphism.
We say that a subgroup \( G \subset \mathrm{PGL}(m,k) \) is \emph{\tb{liftable}} if there exists a subgroup \( G' \subset \mathrm{GL}(m,k) \) such that the restriction \( pr\mid_{G'} \colon G' \to G \) is an isomorphism ([\ref{bio:z22},\ Definition 2.1]).

For an irreducible algebraic variety $Y$ over $k$, let $k(Y)$ be the function field of $Y$. 
For a positive integer $m$, let $\mb P^m$ be the projective space of dimension $m$ over $k$.
Let $\mac X\subset\mb P^{n+1}$ be an irreducible hypersurface of degree $d\geq 3$.
Consider a point $P\in \mathbb P^{n+1}$ and the projection 
\[\pi_P\co \mac X\dashrightarrow \mb P^n\] with center $P$. 
Since $\pi_P$ is a dominant rational map,
we have an extension of function fields $\pi_P^{\ast}\co k(\mb P^n)\hookrightarrow  k(\mac X)$ with 
\[[k(\mac X)\co k(\mb P^n)]=d-m\]
where $m$ is the multiplicity of $\mac X$ at $P$. 
Note that if $m=0$, i.e. $P\not\in \mac X$, then $\pi_P$ is a finite morphism.
The point $P\in\mathbb P^{n+1}$ is called a \ep{\tb{Galois point}} for $\mac X$ if the extension $k(\mac X)/k(\mb P^n)$ is a Galois extension ([\ref{bio:my00}]).
In particular, when $p := \mathrm{char}(k) > 0$ and $d-m$ is divisible by $p$, 
we call such a Galois point a \emph{\tb{wild Galois point}}. 
Elements of ${\rm PGL}(n+2, k)$ that preserve $\mac X$ induce automorphisms of $\mac X$, and hence they induce automorphisms of $k(\mac X)$.
If the Galois group $G$ of $k(\mac X)/k(\mb P^n_k)$ is given by a subgroup of ${\rm PGL}(n+2,k)$,
then $P$ is called an \ep{\tb{extendable Galois point}} ([\ref{bio:fm14}]). If $\mac X$ is smooth, then all Galois points for $\mac X$ are extendable ([\ref{bio:acgh},\ref{bio:mm63},\ref{bio:y01g}]).
Galois points have also been investigated for reducible curves~[\ref{bio:it24}] and within arithmetic frameworks~[\ref{bio:thosn25}], making them a research theme with broad connections.

Let \( P \in \mathbb{P}^{n+1} \) be an extendable Galois point for an irreducible hypersurface \( \mac X \subset \mathbb{P}^{n+1} \) of degree $d\geq 3$, and let $m$ be the multiplicity of $\mac X$ at $P$. 
If $d-m$ is coprime to $\rmc(k)$, then the Galois group of the projection \( \pi_P\co \mac X\dashrightarrow \mb P^n \) is a cyclic group of order $d-m$ (Lemma~\ref{2.2}).
Over a field of characteristic zero, this result is already known (see [\ref{bio:ft14},\ Key\ Lemma\ 3.1]).
If \( d-m \) is divisible by \( \rmc(k) \), 
then the structure of the Galois group of $\pi_P$ has a more complicated structure.
For the cases \( m \in \{0,1\} \), the structure is already known.
\begin{thm}\label{1.15}$([\ref{bio:f07},\ {\rm Theorem}\ 2],[\ref{bio:ft14},\ {\rm Corollary}\ 1.4])$.
Let $k$ be an algebraically closed field of characteristic $p>0$,
and	let $\mac X \subset \mb P^{n+1}$ be a normal hypersurface of degree $d \geq 3$.
Let $P \in \mb P^{n+1}$ be an inner $($resp. outer$)$ Galois point for $\mac X$, $G$ be the Galois group of the projection \( \pi_P \colon \mac X \dashrightarrow \mathbb{P}^n \), and let $m$ be the multiplicity of $\mac X$ at $P$.
We write $d - m= p^u l$ where $\gcd(p,l)=1$.
If  \( m \in \{0,1\} \), then $l$ divides $p^u - 1$, and
\[
G\cong (\mathbb{Z}/p\mathbb{Z})^{\oplus u} \rtimes \mathbb{Z}/l\mathbb{Z}.
\]
\end{thm}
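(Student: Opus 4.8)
The plan is to realize $G$ as a group of affine transformations along the pencil of lines through $P$ and then to read off its structure from elementary group theory and finite-field arithmetic. Normalize coordinates so that $P=[0:\cdots:0:1]$ and $\mac X=V(F)$. A multiplicity computation at $P$ shows that $F$, regarded as a polynomial in $X_{n+1}$ over $k[X_0,\ldots,X_n]$, has degree exactly $d-m$; it is irreducible over $k(\mathbb P^n)$ because $\mac X$ is irreducible, so the last affine coordinate $\theta:=X_{n+1}/X_0$ is a primitive element of $k(\mac X)/k(\mathbb P^n)$ and $[k(\mac X):k(\mathbb P^n)]=d-m\ge 2$. The outer case ($m=0$, i.e.\ $P\notin\mac X$) and the inner case ($m=1$, i.e.\ $P$ a smooth point of $\mac X$) are handled uniformly; in the inner case $P$ is simply the point ``$\theta=\infty$'' on each line through it.

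The first --- and essentially only substantive --- step is to show that every $\sigma\in G$ acts on the primitive element by an affine substitution over the base,
\[
\sigma(\theta)=a_\sigma\,\theta+b_\sigma,\qquad a_\sigma\in k^\times,\quad b_\sigma\in k(\mathbb P^n)
\]
(in fact $b_\sigma$ is a polynomial of degree $\le 1$ in $x_1,\ldots,x_n$). I would deduce this from the extendability of $P$: for smooth $\mac X$ this is classical ([\ref{bio:acgh}, \ref{bio:mm63}, \ref{bio:y01g}]), and for normal $\mac X$ with $m\in\{0,1\}$ it follows from the rigidity of automorphisms of hypersurfaces of degree $\ge 3$ together with the inputs of [\ref{bio:f07}, \ref{bio:ft14}]. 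Once $G\subset\mathrm{PGL}(n+2,k)$, every $\sigma$ induces the identity on $\mathbb P^n$ through $\pi_P$; comparing the linear forms defining $\pi_P\circ\sigma$ and $\pi_P$ on the hypersurface $\mac X$ of degree $\ge 3$ forces $\sigma$ to be represented by a matrix of the form $\left(\begin{smallmatrix}I_{n+1}&0\\ v^{T}&a\end{smallmatrix}\right)$, which sends $\theta$ to $a\theta+(v_0+v_1x_1+\cdots+v_nx_n)$.

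With the affine rule in hand, the structure is routine. The assignment $\sigma\mapsto a_\sigma$ is a homomorphism $G\to k^\times$; since $k$ is algebraically closed, its image is a finite cyclic group $\mu_l=\langle\zeta\rangle$ of some order $l$, necessarily coprime to $p$ because $k^\times$ has no $p$-torsion. Its kernel $N=\{\sigma:\sigma(\theta)=\theta+b_\sigma\}$ embeds into $(k(\mathbb P^n),+)$ via $\sigma\mapsto b_\sigma$, so $N$ is an elementary abelian $p$-group, say $N\cong(\mathbb Z/p\mathbb Z)^{\oplus u}$. Being a normal $p$-subgroup with prime-to-$p$ quotient, $N$ is the unique Sylow $p$-subgroup of $G$; since $|G|=d-m=p^ul$, this forces $|N|=p^u$ and $|G/N|=l$, and Schur--Zassenhaus gives $G\cong N\rtimes(G/N)\cong(\mathbb Z/p\mathbb Z)^{\oplus u}\rtimes\mathbb Z/l\mathbb Z$. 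For the divisibility $l\mid p^u-1$: if $u=0$ it is vacuous (and the isomorphism reduces to $G\cong\mathbb Z/l\mathbb Z$, which is Lemma~\ref{2.2}), so assume $u\ge 1$ and identify $N$ with its image $W\subset k(\mathbb P^n)$, a nonzero $\mathbb F_p$-subspace of dimension $u$. A direct computation of the conjugation action gives $b_{\tau\sigma\tau^{-1}}=\zeta^{-1}b_\sigma$ for $\sigma\in N$ and $\tau\in G$ with $a_\tau=\zeta$, so $W$ is stable under multiplication by every $l$-th root of unity; hence $W$ is a vector space over the subfield $\mathbb F_p[\mu_l]=\mathbb F_{p^j}\subset k$, where $j$ is the multiplicative order of $p$ modulo $l$. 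Thus $j\mid\dim_{\mathbb F_p}W=u$, whence $l\mid p^j-1\mid p^u-1$.

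The only real obstacle I anticipate is the first step --- establishing (and correctly attributing) the linearity of the $G$-action for a normal, not necessarily smooth, hypersurface; this is precisely where the hypotheses $d\ge 3$ and $m\in\{0,1\}$ do their work. Everything afterward --- finite subgroups of $k^\times$, Schur--Zassenhaus, and the identity $\mathbb F_p[\mu_l]=\mathbb F_{p^j}$ --- is entirely standard.
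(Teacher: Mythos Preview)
Note first that Theorem~\ref{1.15} is quoted from [\ref{bio:f07},\ref{bio:ft14}] rather than proved in this paper; the paper instead proves the generalization Theorem~\ref{1.1} (via Theorem~\ref{4.1}), assuming extendability, and your argument matches that proof essentially line for line. Both realize $G$ inside the one-dimensional affine group $\{x\mapsto ax+b\}$ via the extendability step (the paper's Lemmas~\ref{2.1} and~\ref{2.2}(1)), take the character $\sigma\mapsto a_\sigma\in k^\times$, identify its kernel as the unique Sylow $p$-subgroup and show it is elementary abelian, and then split the resulting short exact sequence.

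The only differences are cosmetic. For the splitting, the paper computes directly that a preimage $g$ of a generator of $\mu_l$ satisfies $g^l=I_{n+2}$, whereas you invoke Schur--Zassenhaus; both are fine. For the divisibility $l\mid p^u-1$, the paper observes that conjugation by $g$ scales each $b_\sigma$ by a primitive $l$-th root of unity and hence every orbit of $\langle g\rangle$ on $N\setminus\{1\}$ has size exactly $l$, giving $l\mid p^u-1$ by counting; your argument that $W$ becomes a vector space over $\mathbb F_p[\mu_l]=\mathbb F_{p^j}$ is the same observation packaged module-theoretically and yields the slightly sharper statement $j\mid u$. Your identification of extendability as the only substantive input, and its attribution to the cited references in the normal case with $m\in\{0,1\}$, is exactly the role those references play here.
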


Over a field of characteristic zero, the Galois groups arising from extendable Galois points are closely related to the automorphism groups of hypersurfaces~([\ref{bio:fmt19},\ref{bio:hmo18},
\ref{bio:hmo22},\ref{bio:th21l},\ref{bio:th21d},\ref{bio:kty01},\ref{bio:mo15}]). Moreover, the research results of extendable Galois points are applied to the study of Galois coverings of the projective spaces~([\ref{bio:th23g},\ref{bio:th24a}]). 
In positive characteristic, the study of Galois points is initiated in~[\ref{bio:h06}]. 
In contrast, the study of Galois groups associated with extendable Galois points in positive characteristic is still at an early stage. 
There remains considerable room for further development, including their possible applications~([\ref{bio:bf22},\ref{bio:f14},\ref{bio:f22},\ref{bio:f25}]).
To extend the applicability of the theory of Galois points, it is essential to investigate in detail the Galois groups of wild Galois points, a phenomenon unique to positive characteristic. 
The aim of this paper is to address this problem.
We prove that the Galois groups arising from extendable wild Galois points lift to $\mathrm{GL}(n+2,k)$ and can be analyzed via their natural linear representations on vector spaces. 
In addition, we establish necessary and sufficient conditions for subgroups of the projective linear group to occur as Galois groups of wild Galois points.
\begin{thm}\label{1.1}
Let $k$ be an algebraically closed field of characteristic $p>0$.
Let \( \mac X \subset \mathbb{P}^{n+1} \) be an irreducible hypersurface of degree \( d \) over $k$ with an extendable Galois point  \( P \in \mathbb{P}^{n+1} \), let $m$ be the multiplicity of $\mac X$ at $P$, and let $G$ be the Galois group of the projection \( \pi_P \colon \mac X \dashrightarrow \mathbb{P}^n \).
	We assume that $p$ divides $d-m$. We write $d-m=p^ul$ where $\gcd(p,l)=1$.
Then we have the following:
\begin{enumerate}
\item[$(1)$]The number $l$ divides $p^u-1$.
\item[$(2)$]The group $G$ is isomorphic to $(\mathbb{Z}/p\mathbb{Z})^{\oplus u} \rtimes \mathbb{Z}/l\mathbb{Z}$.
\end{enumerate}
\end{thm}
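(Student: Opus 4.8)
The plan is to realize $G$ as a finite subgroup of a concrete affine subgroup of $\mathrm{PGL}(n+2,k)$ and then read off its structure. First I would choose homogeneous coordinates so that $P = [0:\cdots:0:1]$, so that $\pi_P$ is the projection $[X_0:\cdots:X_{n+1}]\mapsto[X_0:\cdots:X_n]$. Take $\sigma = [A]\in G\subseteq\mathrm{PGL}(n+2,k)$; then $\sigma$ preserves $\mathcal{X}$ and $\sigma|_\mathcal{X}$ is a birational automorphism of $\mathcal{X}$ over $\mathbb{P}^n$, so $\pi_P\circ\sigma|_\mathcal{X} = \pi_P|_\mathcal{X}$ as rational maps $\mathcal{X}\dashrightarrow\mathbb{P}^n$. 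The key geometric step is to deduce $\sigma(P)=P$ from this. Set $P'' := \sigma^{-1}(P)$; since $\pi_P\circ\sigma$ is constant exactly along the punctured lines through $P''$, while $\pi_P$ is constant along those through $P$, for general $x\in\mathcal{X}$ the fiber $F$ of $\pi_P|_\mathcal{X}$ through $x$ --- which by hypothesis equals the fiber of $(\pi_P\circ\sigma)|_\mathcal{X}$ through $x$ --- is contained in both $\overline{Px}$ and $\overline{P''x}$. As $p\mid d-m$ forces $|F| = d-m\ge p\ge 2$, the lines $\overline{Px}$ and $\overline{P''x}$ share at least two points and hence coincide, for general $x$; since $\mathcal{X}$ is not contained in a line this forces $P''=P$, i.e. $\sigma(P)=P$. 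Knowing $\sigma(P)=P$ and that $\sigma$ induces the identity on the $\mathbb{P}^n$ of lines through $P$ (because $\pi_P\circ\sigma=\pi_P$), one sees that $A$, after a unique rescaling, has the block form $\begin{pmatrix} I_{n+1} & 0 \\ v_\sigma & a_\sigma\end{pmatrix}$ with $v_\sigma\in k^{n+1}$ and $a_\sigma\in k^\times$. Uniqueness of this normalized representative makes $\sigma\mapsto A_\sigma$ a group homomorphism, so $G$ embeds into the affine group $\mathcal{G} := \{\,\begin{pmatrix} I_{n+1} & 0 \\ v & a\end{pmatrix} : v\in k^{n+1},\ a\in k^\times\,\}\cong k^{n+1}\rtimes k^\times$; this is the promised lifting of $G$ to $\mathrm{GL}(n+2,k)$.

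Next I would analyze $G$ via the homomorphism $\mathcal{G}\to k^\times$, $\begin{pmatrix} I & 0\\ v & a\end{pmatrix}\mapsto a$. Its restriction to $G$ yields $1\to N\to G\to C\to 1$, where $N = G\cap\{a=1\}$ is a finite subgroup of $(k^{n+1},+)$, hence an $\mathbb{F}_p$-vector space of some dimension $s$, and $C\subseteq k^\times$ is finite, hence cyclic of some order $l'$ coprime to $p$. Thus $|G| = p^s l'$; since $k(\mathcal{X})/k(\mathbb{P}^n)$ is Galois of degree $d-m = p^u l$, we get $s=u$ and $l'=l$. As $N$ is a $p$-group and $|C|$ is prime to $p$, Schur--Zassenhaus splits the sequence and gives $G\cong N\rtimes C\cong(\mathbb{Z}/p\mathbb{Z})^{\oplus u}\rtimes\mathbb{Z}/l\mathbb{Z}$, which is assertion $(2)$.

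For assertion $(1)$, identify $N$ with the $\mathbb{F}_p$-subspace $V := \{v\in k^{n+1} : \begin{pmatrix} I & 0\\ v & 1\end{pmatrix}\in G\}$, and let $\zeta\in k^\times$ be a primitive $l$-th root of unity generating $C$, lifted to $g_0 = \begin{pmatrix} I & 0\\ w & \zeta\end{pmatrix}\in G$. A short matrix computation gives $g_0\begin{pmatrix} I & 0\\ v & 1\end{pmatrix}g_0^{-1} = \begin{pmatrix} I & 0\\ \zeta v & 1\end{pmatrix}$, so normality of $N$ forces $\zeta V\subseteq V$. Hence $V$ is a nonzero module over the ring $\mathbb{F}_p[\zeta]\subseteq k$, which, being a finite integral domain, is a field $\mathbb{F}_{p^r}$ with $r$ the multiplicative order of $p$ modulo $l$; in particular $l\mid p^r-1$, and since $\dim_{\mathbb{F}_p}V = u$ is a multiple of $[\mathbb{F}_{p^r}:\mathbb{F}_p]=r$ we get $r\mid u$. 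Therefore $p^u=(p^r)^{u/r}\equiv 1\pmod{l}$, i.e. $l\mid p^u-1$.

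The hardest part will be the first step: converting the abstract fact that $\sigma$ lies over $\pi_P$ into the concrete normal form of $A$ and the resulting embedding $G\hookrightarrow\mathcal{G}$, since this is where one must treat $\pi_P$ carefully as a rational (not regular) map when $m\ge 1$ and genuinely use that $\mathcal{X}$ is an irreducible hypersurface. After that, assertion $(2)$ is the structure theory of finite subgroups of $k^{n+1}\rtimes k^\times$ together with Schur--Zassenhaus, and assertion $(1)$ comes down to the observation that closure of $V$ under multiplication by the root of unity $\zeta$ upgrades $V$ to a vector space over $\mathbb{F}_p(\zeta)$.
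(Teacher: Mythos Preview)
Your proposal is correct and follows the same overall strategy as the paper: both lift $G$ to the affine subgroup $\mathrm{UT}(*,I_{n+1})\cong k^{n+1}\rtimes k^{\times}$ of $\mathrm{GL}(n+2,k)$, then use the character $G\to k^{\times}$ to obtain the semidirect product decomposition. The differences are in execution. For the embedding, the paper (Lemma~2.1) writes down the relation $\pi_P\circ\sigma=\pi_P$ as a polynomial identity and uses $d\ge 3$ to force the desired matrix shape directly, whereas you argue geometrically that $\sigma(P)=P$ via the coincidence of fibers; your version is perhaps more transparent and only needs $d-m\ge 2$, which you get for free from $p\mid d-m$. For the splitting, the paper computes $g^l=I_{n+2}$ by hand, while you invoke Schur--Zassenhaus; both are immediate here. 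The most interesting divergence is in part~(1): the paper shows that conjugation by $g$ acts freely on $\ker(\varphi)\setminus\{I\}$ and counts orbits to get $l\mid p^u-1$, whereas you observe that $\zeta V\subseteq V$ makes $V$ an $\mathbb{F}_p(\zeta)\cong\mathbb{F}_{p^r}$-vector space, forcing $r\mid u$ and hence $l\mid p^r-1\mid p^u-1$. Your argument is a touch more structural and yields the sharper intermediate statement $r\mid u$; the paper's orbit count is more self-contained. Neither approach offers a real advantage over the other for the theorem as stated.
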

Let \( \mathrm{UT}(m,k) \subset \mathrm{GL}(m,k) \) denote the set of all upper triangular matrices.
\begin{thm}\label{1.2}
Let $k$ be an algebraically closed field of characteristic $p>0$.
Let \( G \subset \mathrm{PGL}(n+2,k) \) be a subgroup isomorphic to \( (\mathbb{Z}/p\mathbb{Z})^{\oplus u} \rtimes \mathbb{Z}/l\mathbb{Z} \) where \( \gcd(p, l) = 1 \) if $l\geq2$. Then the following statements hold:
\begin{enumerate}
\item[$(1)$] The group \( G \) is liftable.
\item[$(2)$]There exists an irreducible hypersurface \( \mac X \subset \mathbb{P}^{n+1} \)  with an extendable wild Galois point $P\in\mb P^{n+1}$ such that \( G \) is the Galois group of the projection $\pi_P \colon \mac X \dashrightarrow \mathbb{P}^n $ if and only if there exists a group \( G' \subset \mathrm{GL}(n+2,k) \) such that \( pr\mid_{G'} \colon G' \to G \) is an isomorphism, and \( G' \) satisfies the following two conditions:
\begin{itemize}
\item[$(i)$] \( \dim \mathrm{Im}(A - I_{n+2}) = 1 \) for some \( A \in G' \);
\item[$(ii)$] \( \mathrm{Im}(B - I_{n+2}) = \mathrm{Im}(C - I_{n+2}) \) for all \( B, C \in G' \setminus \{I_{n+2}\} \).
\end{itemize}
\end{enumerate}
\end{thm}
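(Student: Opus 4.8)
\emph{Proof plan.}

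\textbf{Part (1): liftability.}  The plan is to lift the normal Sylow $p$-subgroup first and then graft on the cyclic complement.  Write $G=H\rtimes C$ with $H\cong(\mb Z/p\mb Z)^{\oplus u}$ the (normal) Sylow $p$-subgroup and $C\cong\mb Z/l\mb Z$.  I would first observe that, since $\rmc(k)=p$, the abelian group $k^{\times}$ is divisible and has no $p$-torsion, so $H^{2}(H,k^{\times})=\mr{Ext}^{1}_{\mb Z}(H_{1}(H,\mb Z),k^{\times})\oplus\mr{Hom}(H_{2}(H,\mb Z),k^{\times})=0$; hence the central extension $pr^{-1}(H)$ of $H$ by $k^{\times}$ splits, producing a lift $H'\subset\mr{GL}(n+2,k)$, which is moreover the \emph{unique} lift of $H$ because it is the $p$-primary part of the abelian group $pr^{-1}(H)=k^{\times}\times H'$.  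Next, a cyclic subgroup always lifts since $k$ is algebraically closed: a preimage $t$ of a generator of $C$ satisfies $t^{l}=\alpha I_{n+2}$, and $\beta^{-1}t$ with $\beta^{l}=\alpha$ generates a lift $\langle\tau\rangle$ of order $l$.  By the uniqueness of $H'$, $\tau$ normalizes $H'$, so $G':=H'\langle\tau\rangle$ is a subgroup of $\mr{GL}(n+2,k)$, and a short check using $H\cap C=\{e\}$ and $|\langle\tau\rangle|=l$ shows $pr\mid_{G'}\co G'\to G$ is an isomorphism.

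\textbf{Part (2), the implication ``$\mac X$ exists $\Rightarrow$ $G'$ exists''.}  I would start from the fact that the Galois group of an extendable Galois point $P$ fixes $P$ and every line through $P$, re-deriving it from $d\ge3$: for $\sigma\in G\subset\mr{PGL}(n+2,k)$ one has $\pi_{P}\circ\sigma=\pi_{P}$ on $\mac X$, and after placing $P=[1:0:\cdots:0]$ with $\pi_{P}[X_{0}:\cdots:X_{n+1}]=[X_{1}:\cdots:X_{n+1}]$ this says that the degree-two forms $(\sigma^{*}X_{i})X_{j}-(\sigma^{*}X_{j})X_{i}$ ($1\le i,j\le n+1$) vanish on the irreducible hypersurface $\mac X$; since $\deg\mac X>2$ they vanish identically, so $\sigma^{*}X_{i}=c_{\sigma}X_{i}$ for all $i\ge1$.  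Consequently each $\sigma\in G$ has a unique representative $\bigl(\begin{smallmatrix}\lambda_{\sigma}&v_{\sigma}^{T}\\0&I_{n+1}\end{smallmatrix}\bigr)\in\mr{UT}(n+2,k)$; these representatives form a subgroup $G'$ with $pr\mid_{G'}$ an isomorphism onto $G$, and since $g-I_{n+2}$ has image contained in — and, for $g\ne I_{n+2}$, equal to — the line spanned by the first basis vector, conditions (i) and (ii) hold.

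\textbf{Part (2), the implication ``$G'$ exists $\Rightarrow$ $\mac X$ exists''.}  Given $G'$ with (i) and (ii), I would first change basis so that $\mr{Im}(g-I_{n+2})$ is the first coordinate line for all $g\in G'$, i.e.\ $G'\subset\{\bigl(\begin{smallmatrix}\lambda&v^{T}\\0&I_{n+1}\end{smallmatrix}\bigr)\}$, and put $P=[1:0:\cdots:0]$.  With $R:=k[X_{1},\ldots,X_{n+1}]$ and $S:=R[X_{0}]$, the group $G'$ fixes $R$ and acts on $X_{0}$ by affine maps $X_{0}\mapsto\lambda_{g}^{-1}(X_{0}-\ell_{g})$, $\ell_{g}\in R_{1}$.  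The Sylow $p$-part $H$ acts by the translations $X_{0}\mapsto X_{0}-\ell$ over an $\mb F_{p}$-subspace $\Lambda\subset R_{1}$ of dimension $u$, so $S^{H}=R[u_{H}]$ with $u_{H}:=\prod_{\ell\in\Lambda}(X_{0}-\ell)$, a separable additive polynomial of $X_{0}$-degree $p^{u}$; a generator $\tau$ of the cyclic complement carries $u_{H}$ to $\eta(u_{H}-\delta)$ for a primitive $l$-th root of unity $\eta$ and some $\delta\in R_{p^{u}}$ (using $\tau\Lambda\tau^{-1}=\Lambda$), and as $\gcd(p,l)=1$ I can linearize, replacing $u_{H}$ by $v:=u_{H}-\tfrac{\eta}{\eta-1}\delta$ with $\tau(v)=\eta v$, to obtain $S^{G'}=R[F_{0}]$ where $F_{0}:=v^{l}$ (or $F_{0}:=u_{H}$ if $l=1$) is homogeneous of degree $d:=p^{u}l$.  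Thus every $G'$-invariant degree-$d$ form is $F_{0}+g$ with $g\in R_{d}$.  Since $F_{0}(P)=1\ne0$, none of the $V(F_{0}+g)$ passes through $P$, so this linear system is base-point-free and its associated map has image of dimension $\ge2$; by Bertini a general $\mac X=V(F_{0}+g)$ is irreducible.  Then $P\notin\mac X$ gives $m=0$ and $d-m=p^{u}l$; $\partial F_{0}/\partial X_{0}=l\,v^{l-1}\,\partial u_{H}/\partial X_{0}\ne0$ (again using $\gcd(p,l)=1$ and separability of $u_{H}$) makes $\pi_{P}\co\mac X\da\mb P^{n}$ separable of degree $d$; and since $G=pr(G')$ acts faithfully on $\mac X$ with $|G|=d=[k(\mac X):\pi_{P}^{*}k(\mb P^{n})]$, the invariant subfield is $\pi_{P}^{*}k(\mb P^{n})$ and $\pi_{P}$ is Galois with group $G$.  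So $P$ is an extendable wild Galois point of $\mac X$ with Galois group $G$ (conjugating back by the chosen change of basis).

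\textbf{Where the difficulty lies.}  The cohomology argument in (1) and the minor-vanishing argument in the first implication of (2) are routine; the core of the proof is the second implication of (2) — computing the full invariant ring $S^{G'}=R[F_{0}]$ with the explicit generator coming from the additive polynomial $u_{H}$ and the linearized cyclic action, and then guaranteeing, via base-point-freeness together with Bertini, that some invariant form of degree $d$ defines an \emph{irreducible} hypersurface on which the Galois group is exactly $G$ and not something larger.  (One keeps $u\ge1$ in mind throughout: if $u=0$ then $|G|$ is prime to $p$ and no wild Galois point can have Galois group $G$, so the equivalence is to be read in the wild setting.)
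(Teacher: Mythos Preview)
Your argument is correct in all essential points, and parts (1) and the forward implication of (2) parallel the paper's reasoning (the paper proves Lemma 2.1 exactly as you do, and lifts $G$ by elementary matrix computations rather than by $H^2(H,k^{\times})=0$, but the content is the same).

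The backward implication of (2) is where you and the paper genuinely diverge.  The paper, after conjugating $G'$ into $\mathrm{UT}(*,I_{n+1})$ and further conjugating so that the cyclic generator is diagonal, writes down the single invariant $F_{H'}:=\prod_{h\in G'}h^{*}X_{0}$ and then constructs, for every $m\ge0$, an explicit form $A(X_{1},\dots,X_{n+1})F_{H'}+B(X_{1},\dots,X_{n+1})$ of degree $p^{u}l+m$; irreducibility is checked by hand via the factorization of the $X_{0}$-constant term, with a separate and more delicate argument when $n=1$.  Your approach instead computes the full invariant ring $S^{G'}=R[F_{0}]$ (building $F_{0}$ in two steps as $u_{H}\leadsto v\leadsto v^{l}$) and invokes Bertini on the base-point-free system $\langle F_{0}\rangle+R_{d}$.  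This is cleaner and avoids the $n=1$ case split, at the cost of producing only outer Galois points ($m=0$, degree exactly $p^{u}l$); the paper's construction yields hypersurfaces of every degree $p^{u}l+m$ and in particular inner wild Galois points.  One small wrinkle: the paper works throughout under the standing hypothesis $d\ge3$, so your construction strictly misses the single case $p=2$, $u=1$, $l=1$, where $d=2$; a one-line fix is to take $X_{1}F_{0}+g$ with $g\in R_{d+1}$, giving $m=1$ and degree $3$.  You should also make explicit why your $\eta$ is a \emph{primitive} $l$-th root of unity: it equals $\lambda_{\tau}^{p^{u}}$, and $\lambda_{\tau}$ has exact order $l$ because any $\tau^{j}$ with $\lambda_{\tau}^{j}=1$ lies in the unipotent part and hence has $p$-power order, forcing $\tau^{j}=I$.
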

In Theorem \ref{4.7}, we determine the defining equations of normal hypersurfaces with an extendable wild Galois point.

Section 2 contains preliminaries.
we provide preliminary definitions and background results on Galois points and their associated Galois groups. We focus on the case of positive characteristic, introducing the notion of extendable Galois points and describing how their Galois groups can be represented via upper triangular matrices. These foundational tools are essential for the structural analysis in the subsequent sections.
In Section 3, we prove the main theorems concerning the structure and liftability of Galois groups arising from wild Galois points. We show that such groups are semidirect products of elementary abelian $p$-groups and cyclic groups, and we give necessary and sufficient conditions for their realization via hypersurfaces. Explicit constructions of hypersurfaces with prescribed Galois groups are also provided.
In Section~4, we investigate the relation between wild Galois points and wild ramification. 
We prove that, if a hypersurface in projective space is normal, then the projection from a wild Galois point is wildly ramified in the sense of discrete valuation rings. 
In this sense, wildly ramified Galois covers of smooth curves in positive characteristic are studied, for example, in [\ref{bio:p02}]. 
As an outcome, we finally pose an open question concerning the existence of normal hypersurfaces with wild Galois points.
\section{Preliminary}
Let $X$ be an irreducible algebraic variety, and let $g$ be a birational self-map of $X$. 
Then the pullback map
\[
g^* \colon k(X) \ni f \mapsto f \circ g \in k(X)
\]
is a $k$-automorphism of the function field $k(X)$. 
In particular, the group of $k$-automorphisms $\mathrm{Aut}_k(k(X))$ is identified with 
the group of birational self-maps $\mathrm{Bir}(X)$ of $X$ via the pullback ([\ref{bio:rh}, \S1. Theorem 4.4]), and we shall make this identification throughout the paper. 

For variables $Y_1,\ldots, Y_m$ and $d\geq 0$, let 
$k[Y_1,\ldots ,Y_m]_d$ be the $k$-vector space of forms of degree $d$ in the variables $Y_1,\ldots, Y_m$.
For $A=(a_{ij})\in{\rm GL}(n+2,k)$ and
$F(X_0,\ldots,X_{n+1})\in k[X_0,\ldots,X_{n+1}]_d$, 
we define the action of $A$ on $F(X_0,\ldots,X_{n+1})$ as follows:
\[ A^{\ast}F(X_0,\ldots,X_{n+1}):=F\left(\sum_{i=1}^{n+2}a_{1i}X_{i-1},\ldots,\sum_{i=1}^{n+2}a_{n+2\,i}X_{i-1}\right).\]
Let $\mac X\subset \mb P^{n+1}$ be a hypersurface of degree $d$ defined by $F(X_0,\ldots,X_{n+1})=0$.
 If there exists $t \in k$ such that
\[A^{\ast}F(X_0,\ldots,X_{n+1})=tF(X_0,\ldots,X_{n+1}),\] then $A$ induces an automorphism of $\mac X$, which we denote by $g_A$.
Let $\pi_P\colon \mac X\dashrightarrow \mathbb P^n$ be the projection with center $P\in \mathbb P^{n+1}$.
Define
\begin{equation}\label{1}
G_{\pi_P} := \{ g \in \mathrm{Bir}(\mathcal{X}) \mid \pi_P \circ g = \pi_P \}
\end{equation}
which is a subgroup of $\mathrm{Bir}(\mathcal{X})$.  
Let $m$ be the multiplicity of $\mac X$ at $P$.
Then $P$ is a Galois point if and only if $|G_{\pi_P}|=d-m$.
The point $P$ is an extendable Galois point if and only if $G_{\pi_P}\subset \mr{PGL}(n+2,k)$.
We assume that $P=[1:0:\cdots:0]$.  
Then the restriction of the projection $\pi_P$ to $\mathcal{X}\setminus\{P\}$ is given by  
\[
\pi_P\mid_{\mathcal{X}\backslash\{P\}} \colon \mathcal{X}\backslash\{P\}\ni[X_0:X_1:\cdots:X_{n+1}]
\mapsto[X_1:\cdots:X_{n+1}] \in\mathbb{P}^n.
\]
We define the subgroup $\mr{UT}(*,I_{n+1})$ of $\mr{GL}(n+2,k)$ by  
\[
\mr{UT}(*,I_{n+1}) := 
\left\{
\begin{pmatrix}
	a & \overline{b} \\
	0 & I_{n+1}
\end{pmatrix}\in \mr{GL}(n+2,k)
\;\middle|\;
a \in k^{\times},\; \overline{b} \in k^{n+1}
\right\}
\]
where $k^{\times}:=k\backslash\{0\}$.
\begin{lem}\label{2.1}$([\ref{bio:y03},\ p.528])$.
Let $P:=[1:0:\cdots:0]\in\mb P^{n+1}$ be a point, 
let $\mac X\subset \mb P^{n+1}$ be an irreducible hypersurface of degree $d\geq3$, and
let	$\alpha\in {\rm PGL}(n+2,k)$ be an element  such that $\alpha\in G_{\pi_P}$.
Then there is a unique matrix $A\in \mathrm{UT}(*,I_{n+1})$ such that $\alpha=[A]$.
\end{lem}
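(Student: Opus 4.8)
The plan is to fix any matrix $A=(a_{ij})\in\mathrm{GL}(n+2,k)$ representing $\alpha$, say with $\alpha=g_A$, and to extract from the defining relation of $G_{\pi_P}$ enough vanishing to force $A$ into the block shape prescribed by $\mathrm{UT}(*,I_{n+1})$. Since $g_A$ sends $[X_0:\cdots:X_{n+1}]$ to $[L_1:\cdots:L_{n+2}]$ with $L_j:=\sum_{i=1}^{n+2}a_{ji}X_{i-1}$, and $\pi_P$ sends $[Y_0:\cdots:Y_{n+1}]$ to $[Y_1:\cdots:Y_{n+1}]$, the relation $\pi_P\circ\alpha=\pi_P$ says that on a dense open subset of $\mac X$ (where both rational maps are defined) the points $[L_2:\cdots:L_{n+2}]$ and $[X_1:\cdots:X_{n+1}]$ of $\mb P^n$ coincide. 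Consequently every $2\times2$ minor $L_jX_{j'-1}-L_{j'}X_{j-1}$ with $j,j'\in\{2,\ldots,n+2\}$ vanishes on that open subset, and hence, taking the Zariski closure, on all of $\mac X$.

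The key step is a degree count. The defining polynomial $F$ of $\mac X$ is irreducible of degree $d\geq3$, so any polynomial of degree at most $2$ vanishing on $\mac X$ is identically zero; applied to the quadrics above this yields $L_jX_{j'-1}=L_{j'}X_{j-1}$ in $k[X_0,\ldots,X_{n+1}]$ for all $j,j'\in\{2,\ldots,n+2\}$. (This is the only place the hypothesis $d\geq3$ enters.) Note that $n\geq1$, since an irreducible hypersurface in $\mb P^1$ has degree one; thus for each fixed $j\in\{2,\ldots,n+2\}$ one may pick $j'\neq j$ in the same range, and then $X_{j-1}\mid L_jX_{j'-1}$ together with $\gcd(X_{j-1},X_{j'-1})=1$ forces $X_{j-1}\mid L_j$, so $L_j=c_jX_{j-1}$ for some $c_j\in k$. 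Comparing $c_jX_{j-1}X_{j'-1}=c_{j'}X_{j'-1}X_{j-1}$ shows that all the $c_j$ equal a single scalar $c$, and $c\neq0$, for otherwise rows $2,\ldots,n+2$ of $A$ would vanish, contradicting invertibility. Hence $A=\begin{pmatrix}a_{11}&\overline b\\0&cI_{n+1}\end{pmatrix}$ with $\overline b:=(a_{12},\ldots,a_{1,n+2})\in k^{n+1}$, and $\det A=a_{11}c^{n+1}\neq0$ gives $a_{11}\neq0$; therefore $c^{-1}A=\begin{pmatrix}a_{11}/c&\overline b/c\\0&I_{n+1}\end{pmatrix}$ lies in $\mathrm{UT}(*,I_{n+1})$ and represents $\alpha$, which proves existence.

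Uniqueness is then immediate: if $A,A'\in\mathrm{UT}(*,I_{n+1})$ both represent $\alpha$, then $A'=sA$ for some $s\in k^\times$, and comparing the lower-right $(n+1)\times(n+1)$ blocks, which equal $I_{n+1}$ on each side, forces $s=1$, hence $A=A'$. I expect the main obstacle to be exactly the degree count that annihilates the quadrics; the remaining steps are routine, though one must take care that the proportionality of $[L_2:\cdots:L_{n+2}]$ and $[X_1:\cdots:X_{n+1}]$ is guaranteed a priori only on a dense open subset of $\mac X$, so it must be propagated to all of $\mac X$ by passing to closures before $F$ enters the argument.
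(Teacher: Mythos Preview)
Your proof is correct and follows essentially the same route as the paper: both arguments exploit that the quadratic minors $L_jX_{j'-1}-L_{j'}X_{j-1}$ vanish on $\mac X$ and then use the degree bound $d\geq3$ to conclude they vanish identically, after which the block shape and uniqueness fall out immediately. Your write-up is slightly more careful in two places---you explicitly justify $n\geq1$ and you spell out the divisibility step $X_{j-1}\mid L_j$---but the underlying argument is identical to the paper's.
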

\begin{proof}
Let $B:=(b_{ij})\in{\rm GL}(n+2,k)$ be a matrix such that $\alpha=[B]$.
Since $\pi_P\mid_{\mac X\backslash \{P\}}([X_0:\cdots :X_{n+1}])=[X_1:\cdots:X_{n+1}]$,
and $\pi_P\circ \alpha=\pi_P$, we have
\[[\sum_{i=1}^{n+2}b_{2i}X_{i-1}:\cdots :\sum_{i=1}^{n+2}b_{n+2\,i}X_{i-1}]=[X_1:\cdots:X_{n+1}]\]
for $[X_0:\cdots :X_{n+1}]\in  \mac X\backslash\{P\}$.
It follows that
\begin{equation*}
\left(\sum_{i=1}^{n+2}b_{u+1\,i}X_{i-1}\right)X_v-\left(\sum_{i=1}^{n+2}b_{v+1\,i}X_{i-1}\right)X_u=0
\end{equation*}
for any point $[X_0:\cdots :X_{n+1}]\in \mac X$ and $u,v=1,\ldots, n+1$.
Since the degree of $\mac X$ is at least $3$, 
it follows that the above relation holds as an identity of polynomials in $X_0,\dots,X_{n+1}$.
Consequently, we obtain $b_{22}=b_{ii}$ for $i=2,\ldots, n+2$ and $b_{ij}=0$ for $2\leq i,j\leq n+2$ and $i\neq j$.
We set $A:=a_{22}^{-1}B$ and $a_{1i}:=b_{1i}a_{22}^{-1}$ for $i=1,\ldots, n+2$.
Then
\[
A=\begin{pmatrix} 
a_{11} & a_{12} & \dots  & a_{1n+2} \\
0 & 1 & \dots  & 0\\
\vdots & \vdots & \ddots & \vdots \\
0 & 0& \dots  & 1
\end{pmatrix}\in\mathrm{UT}(*,I_{n+1})\quad{\rm and}\quad \alpha=[A].
\]
Let \( C \in \mathrm{UT}(*, I_{n+1}) \) be a matrix such that \([A] = [C]\). 
Then there exists \( t \in k^{\times} \) with \( A = tC \). 
Since the \((i,i)\)-entries of both \( A \) and \( C \) are equal to \(1\) for \( i = 2, \ldots, n+2 \), 
it follows that \( t = 1 \). 
Hence, the uniqueness of \( A \) is established.
\end{proof}
Let $G$ be a group. For a subset $S \subset G$, we write $\langle S \rangle$ for the subgroup of $G$ generated by $S$.
For an integer $l\geq 2$, let $e_l$ denote a primitive $l$-th root of unity in the field $k$.
\begin{lem}\label{2.2}
Let $k$ be an algebraically closed field, and let $p:=\rmc(k)$.
Let $P:=[1:0:\cdots:0]\in\mb P^{n+1}$ be a point, 
let $\mac X\subset \mathbb P^{n+1}$ be an irreducible hypersurface of degree $d\geq3$, and let $G_{\pi_P}$ be the group defined in $(\ref{1})$.
We assume that $G_{\pi_P}\subset \mathrm{PGL}(n+2,k)$.
Then we have the following:
\begin{enumerate}
\item[$(1)$]There exists a group \( G \subset \mathrm{UT}(*,I_{n+1}) \) such that the homomorphism $pr\mid_{G} \colon G \to G_{\pi_P}$ is an isomorphism.
\item[$(2)$]We assume that $p>0$ and the order $l:=|G_{\pi_P}|$ of $G_{\pi_P}$ is coprime to $p$. 
Then $G_{\pi_P}$ is a cyclic group. Moreover, there exists a matrix 
$B \in \mathrm{UT}(*, I_{n+1})$ such that
\[
\beta G_{\pi_P} \beta^{-1}
= \left\langle 
\left[\begin{pmatrix}
e_l & 0 \\
0 & I_{n+1}
\end{pmatrix}\right]
\right\rangle
\]
where $\beta:=[B]\in\mathrm{PGL}(n+2,k)$.
\end{enumerate}
\end{lem}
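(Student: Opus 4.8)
The plan is to use Lemma~\ref{2.1} to transport all of $G_{\pi_P}$ into $\mathrm{UT}(*,I_{n+1})$ and then exploit the block form of matrices in $\mathrm{UT}(*,I_{n+1})$. First I would record that $\mathrm{UT}(*,I_{n+1})$ is a subgroup of $\mathrm{GL}(n+2,k)$: products and inverses stay in it, as one sees from
\[
\begin{pmatrix} a & \overline{b} \\ 0 & I_{n+1}\end{pmatrix}\begin{pmatrix} a' & \overline{b}' \\ 0 & I_{n+1}\end{pmatrix}=\begin{pmatrix} aa' & a\overline{b}'+\overline{b} \\ 0 & I_{n+1}\end{pmatrix}.
\]
Moreover the restriction of $pr$ to $\mathrm{UT}(*,I_{n+1})$ is injective: if two such matrices have the same class they differ by a scalar $t$, and comparing the $(2,2)$-entries forces $t=1$. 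By Lemma~\ref{2.1} its image contains $G_{\pi_P}$. Hence for part $(1)$ I would simply take $G:=\bigl(pr\mid_{\mathrm{UT}(*,I_{n+1})}\bigr)^{-1}(G_{\pi_P})$, which is a subgroup of $\mathrm{UT}(*,I_{n+1})$ mapped isomorphically onto $G_{\pi_P}$ by $pr$.

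For part $(2)$ I would look at the homomorphism $\rho\co G\ra k^{\times}$ extracting the upper-left entry, i.e. $\rho\left(\begin{smallmatrix} a & \overline{b}\\ 0 & I_{n+1}\end{smallmatrix}\right)=a$; the product formula above shows $\rho$ is a group homomorphism. Its kernel consists of the matrices $\left(\begin{smallmatrix} 1 & \overline{b}\\ 0 & I_{n+1}\end{smallmatrix}\right)$, which form a subgroup of the additive group $(k^{n+1},+)$; since $\rmc(k)=p>0$, every nontrivial element there has order $p$, so $\ker\rho$ is a finite $p$-group. As $|\ker\rho|$ divides $|G|=l$ with $\gcd(p,l)=1$, it is trivial, so $\rho$ embeds $G$ into $k^{\times}$. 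Since every finite subgroup of the multiplicative group of a field is cyclic, $G$, and hence $G_{\pi_P}$, is cyclic of order $l$. If $l=1$ nothing further is needed, so assume $l\geq 2$, fix a generator $A=\left(\begin{smallmatrix} a & \overline{b}\\ 0 & I_{n+1}\end{smallmatrix}\right)$ of $G$, and note that $a=\rho(A)$ has order $l$ in $k^{\times}$, hence is a primitive $l$-th root of unity; in particular $a\neq 1$.

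It remains to diagonalize. Conjugating $A$ by $C:=\left(\begin{smallmatrix} 1 & \overline{c}\\ 0 & I_{n+1}\end{smallmatrix}\right)\in\mathrm{UT}(*,I_{n+1})$ yields, again by the product formula, the matrix $\left(\begin{smallmatrix} a & \overline{b}+(1-a)\overline{c}\\ 0 & I_{n+1}\end{smallmatrix}\right)$, so taking $\overline{c}:=(a-1)^{-1}\overline{b}$ makes $CAC^{-1}=\left(\begin{smallmatrix} a & 0\\ 0 & I_{n+1}\end{smallmatrix}\right)$. With $B:=C$ and $\beta:=[B]$ we then have $\beta G_{\pi_P}\beta^{-1}=\langle[CAC^{-1}]\rangle$, and since $a$ and $e_l$ are both primitive $l$-th roots of unity, $\left(\begin{smallmatrix} a & 0\\ 0 & I_{n+1}\end{smallmatrix}\right)$ is a power of $\left(\begin{smallmatrix} e_l & 0\\ 0 & I_{n+1}\end{smallmatrix}\right)$, so the two cyclic groups coincide, completing $(2)$. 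In truth there is no serious obstacle: the lemma is essentially a bookkeeping exercise built on Lemma~\ref{2.1}, the one step carrying real content being the triviality of $\ker\rho$ — it is exactly there that the hypotheses $p>0$ and $\gcd(l,p)=1$ are used, and it is what forces cyclicity.
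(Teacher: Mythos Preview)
Your proof is correct and follows essentially the same route as the paper's: lift $G_{\pi_P}$ to $\mathrm{UT}(*,I_{n+1})$ via the uniqueness in Lemma~\ref{2.1}, use the $(1,1)$-entry homomorphism to $k^{\times}$, show its kernel is a $p$-group (you via the additive structure of $k^{n+1}$, the paper by computing $A^p$ directly) and hence trivial, and then conjugate a generator to diagonal form by the same matrix $B$. The only cosmetic differences are that you package part~(1) as ``$pr$ is injective on the subgroup $\mathrm{UT}(*,I_{n+1})$'' rather than checking $A(gh)=A(g)A(h)$ by hand, and you explicitly dispose of the case $l=1$.
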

\begin{proof}
We show Lemma \ref{2.2} (1).
By Lemma \ref{2.1}, for $g\in G_{\pi_P}$, there is a unique matrix  $A(g)\in \mr{UT}(*,I_{n+1})$ such that 
$g=[A(g)]$.
For each $g \in G_{\pi_P}$, let $A(g)$ denote the associated matrix, and denote by $A(g)_i \in k$ its $(1,i)$-entry for $i=1,\ldots,n+2$.
Let $G\subset \mr{UT}(*,I_{n+1})$ be the group generated by $\{\,A(h)\,\}_{h\in G_{\pi_P}}$.
Since 
\[A(g)A(h)=\begin{pmatrix} 
a(g)_1a(h)_1& a(g)_1a(h)_2 +a(g)_2& \dots  & a(g)_1a(h)_{n+2}+a(g)_{n+2}\\
0 & 1 & \dots  & 0\\
\vdots & \vdots & \ddots & \vdots \\
0 & 0& \dots  & 1
\end{pmatrix}\]
is contained in $\mathrm{UT}(*,I_{n+1})$,
we have $A(gh)=A(g)A(h)$ for $g,h\in G_{\pi_P}$.
Since 
\[A(g)^{-1}=
\begin{pmatrix} 
\frac{1}{a(g)_1} & -\frac{a(g)_{2}}{a(g)_1} & \dots  & -\frac{a(g)_{n+2}}{a(g)_1} \\
0 & 1 & \dots  & 0\\
\vdots & \vdots & \ddots & \vdots \\
0 & 0& \dots  & 1
\end{pmatrix}\in\mathrm{UT}(*,I_{n+1}),\]
we get $A(g^{-1})=A(g)^{-1}$ for $g\in G_{\pi_P}$.
Thus, $pr\mid_{G}\co G\to G_{\pi_P}$ is an isomorphism.

We show Lemma \ref{2.2} (2).
We assume that $p>0$ and $\gcd(|G_{\pi_P}|,p)=1$. 
Let $G\subset \mathrm{UT}(*,I_{n+1})$ be a group such that $pr\mid_{G}\co G\to G_{\pi_P}$ is an isomorphism.
Let $A:=\begin{pmatrix} 
a_{11} & a_{12} & \dots  & a_{1n+2} \\
0 & 1 & \dots  & 0\\
\vdots & \vdots & \ddots & \vdots \\
0 & 0& \dots  & 1
\end{pmatrix}\in G$.
We assume that $a_{11}=1$.
Then
\begin{equation*}
\begin{split}
A^p&=\begin{pmatrix} 
	1 & pa_{12} & \dots  & pa_{1n+2} \\
	0 & 1 & \dots  & 0\\
	\vdots & \vdots & \ddots & \vdots \\
	0 & 0& \dots  & 1
\end{pmatrix}\\
&=I_{n+2}.
\end{split}
\end{equation*}
Since $\gcd(|G_{\pi_P}|,p)=1$, we get that $A=I_{n+2}$.  
Thus, a homomorphism
\[\varphi\co G\ni A\mapsto a_{11}\in k^{\times}\]
is injective.
Since $\mr{im}(\varphi)$ is a finite subgroup of $k^{\times}$, 
$\mr{im}(\varphi)$ is a cyclic group. Thus, $G_{\pi_P}$ is a cyclic group.

Let $A=\begin{pmatrix}
	a_{11} & a_{12} & \dots  & a_{1n+2} \\
	0 & 1 & \dots  & 0\\
	\vdots & \vdots & \ddots & \vdots \\
	0 & 0& \dots  & 1
\end{pmatrix}\in G$ be a generator of $G$.
Since $G$ is a cyclic group of order $l$, $a_{11}$ is a primitive \(l\)-th root of unity.
We set
\[
B:=\begin{pmatrix}
	1 & \frac{a_{12}}{a_{11}-1} & \dots  & \frac{a_{1n+2}}{a_{11}-1} \\
	0 & 1 & \dots  & 0\\
	\vdots & \vdots & \ddots & \vdots \\
	0 & 0& \dots  & 1
\end{pmatrix}\in\mathrm{UT}(*,I_{n+1}).
\]
Then we have
\[
BAB^{-1}=
\begin{pmatrix}
	a_{11} & 0 \\
	0 & I_{n+1}
\end{pmatrix}.
\]
We set $\beta:=[B]\in\mathrm{PGL}(n+2,k)$.
Then $\beta G_{\pi_P} \beta^{-1}
= \left\langle 
\left[\begin{pmatrix}
	e_l & 0 \\
	0 & I_{n+1}
\end{pmatrix}\right]
\right\rangle$.
\end{proof}
We establish necessary and sufficient conditions for the existence of an extendable Galois point on a hypersurface. 
This gives a partial extension of the corresponding statement in $[\ref{bio:ft14},\ \mathrm{Key\ Lemma}\ 3.1]$ to the case of positive characteristic, 
and the proof is carried out in a similar manner.
\begin{thm}\label{2.3}
Let $k$ be an algebraically closed field of characteristic $p\geq0$, and
let $\mac X\subset \mb P^{n+1}$ be an irreducible hypersurface of degree $d\geq 3$.
Let $P\in\mathbb P^{n+1}$ be a point, let $G_{\pi_P}$ be the group defined in $(\ref{1})$, and let $m$ be the multiplicity of $\mac X$ at $P$.
We assume that  $p=0$ or $\gcd(p,d-m)=1$.
Then $P$ is an extendable Galois point of $X$ if and only if by replacing the coordinate system if necessary, $P=[1:0:\cdots:0]$ and  $\mac X$ is defined by
\[ F_m(\X)X_0^{d-m}+F_d(\X)=0\]
where  $F_u(\X)\in k[\X]_u$ for $u=m,d$.
Furthermore, when the defining equation takes this form, 
\[G_{\pi_P}=\left\langle 
\left[\begin{pmatrix}
e_{d-m}&0\\
0&I_{n+1}
\end{pmatrix}\right]\right\rangle.
\]
\end{thm}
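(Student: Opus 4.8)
The plan is to prove the equivalence by its two implications and then read off $G_{\pi_P}$ from the explicit equation. After a projective change of coordinates one may assume $P=[1:0:\cdots:0]$, so that $\pi_P\mid_{\mac X\setminus\{P\}}$ is $[X_0:\cdots:X_{n+1}]\mapsto[X_1:\cdots:X_{n+1}]$ as in Lemma~\ref{2.1}; recall also that $[k(\mac X):k(\mb P^n)]=d-m$ and that $P$ is a Galois point precisely when this extension is Galois, equivalently when $|G_{\pi_P}|=d-m$. The $(\Leftarrow)$ direction will be Kummer theory for the projection, and the $(\Rightarrow)$ direction will combine the normalization of $G_{\pi_P}$ furnished by Lemma~\ref{2.2} with the invariance of the defining form under the resulting diagonal matrix. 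I describe the case $\rmc(k)=p>0$, $p\nmid d-m$; the case $\rmc(k)=0$ is $[\ref{bio:ft14},\ \mathrm{Key\ Lemma}\ 3.1]$, and the same argument applies there.

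For $(\Leftarrow)$, assume $\mac X$ is defined by $F=F_m(\X)X_0^{d-m}+F_d(\X)$ with $F_u\in k[\X]_u$. First I would check that the multiplicity of $\mac X$ at $P$ is $m$: dehomogenizing at $X_0=1$ gives $F_m+F_d$, whose lowest-degree part is $F_m$, of degree $m$ (note $F_m\neq0$ and $F_d\neq0$, as otherwise $F$ would be reducible or would force $d-m=0$). On the chart $X_{n+1}\neq0$, writing $y_i:=X_i/X_{n+1}$ and $x_0:=X_0/X_{n+1}$, one has $k(\mb P^n)=k(y_1,\ldots,y_n)$ and $k(\mac X)=k(\mb P^n)(x_0)$, and the equation of $\mac X$ becomes $x_0^{\,d-m}=-f_d/f_m$ for suitable $f_m,f_d\in k(\mb P^n)$ with $f_m\neq0$. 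Since $d-m$ is prime to $\rmc(k)$ and $k$ is algebraically closed, $k(\mb P^n)$ contains a primitive $(d-m)$-th root of unity, so $k(\mac X)/k(\mb P^n)$ is a Kummer extension, hence Galois with cyclic Galois group; together with $[k(\mac X):k(\mb P^n)]=d-m$ this forces $X^{d-m}+f_d/f_m$ to be the minimal polynomial of $x_0$, so the extension has degree $d-m$, $P$ is a Galois point, and $|G_{\pi_P}|=d-m$. Finally, the class $\alpha:=[\mathrm{diag}(e_{d-m},I_{n+1})]$ satisfies $\mathrm{diag}(e_{d-m},I_{n+1})^{\ast}F=F$ (since $e_{d-m}^{\,d-m}=1$), so it is an automorphism of $\mac X$; as the matrix fixes $X_1,\ldots,X_{n+1}$ we get $\pi_P\circ\alpha=\pi_P$, i.e. $\alpha\in G_{\pi_P}$, and $\alpha$ has order $d-m$ in $\mr{PGL}(n+2,k)$. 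Hence $G_{\pi_P}=\langle\alpha\rangle\subset\mr{PGL}(n+2,k)$, which gives both that $P$ is extendable and the ``furthermore'' assertion.

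For $(\Rightarrow)$, assume $P$ is an extendable Galois point, so $|G_{\pi_P}|=d-m$ and $G_{\pi_P}\subset\mr{PGL}(n+2,k)$. By Lemma~\ref{2.2}(1) there is a subgroup $G\subset\mr{UT}(*,I_{n+1})$ with $pr\mid_G\colon G\to G_{\pi_P}$ an isomorphism, and since $\gcd(p,d-m)=1$, Lemma~\ref{2.2}(2) yields $B\in\mr{UT}(*,I_{n+1})$ with $\beta G_{\pi_P}\beta^{-1}=\langle[\mathrm{diag}(e_{d-m},I_{n+1})]\rangle$ for $\beta=[B]$. Because $B\in\mr{UT}(*,I_{n+1})$ fixes $P$ and acts trivially on $X_1,\ldots,X_{n+1}$, replacing $\mac X$ by $g_B(\mac X)$ keeps $P=[1:0:\cdots:0]$, preserves the multiplicity $m$ at $P$, and replaces $G_{\pi_P}$ by the above conjugate; so one may assume $G_{\pi_P}=\langle\alpha\rangle$ with $\alpha=[A]$, $A=\mathrm{diag}(e_{d-m},I_{n+1})$. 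Now write $F=\sum_{j=0}^{d}G_j(\X)X_0^{\,j}$ with $G_j\in k[\X]_{d-j}$. Since $\alpha\in G_{\pi_P}$ preserves $\mac X$, there is $t\in k^{\times}$ with $A^{\ast}F=tF$, that is, $e_{d-m}^{\,j}=t$ for every $j$ with $G_j\neq0$. The multiplicity of $\mac X$ at $P$ equals $d-\max\{\,j:G_j\neq0\,\}$, which is $m$; hence $G_{d-m}\neq0$ and $G_j=0$ for $j>d-m$, so $t=e_{d-m}^{\,d-m}=1$ and $(d-m)\mid j$ whenever $G_j\neq0$. Since $0\leq j\leq d-m$, only $j\in\{0,d-m\}$ remain, and therefore $F=G_{d-m}(\X)X_0^{d-m}+G_0(\X)$ with $G_{d-m}\in k[\X]_m$ and $G_0\in k[\X]_d$ — the asserted form, with $F_m:=G_{d-m}$ and $F_d:=G_0$.

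The multiplicity computations and the bookkeeping of conventions for the $\mr{PGL}$-action and the coordinate change are routine, and the coprimality hypothesis is used only through Lemma~\ref{2.2}(2) in $(\Rightarrow)$ and through the existence of a primitive $(d-m)$-th root of unity in $(\Leftarrow)$. I expect the one point that deserves care to be the claim in $(\Leftarrow)$ that the Kummer extension has degree exactly $d-m$, and hence is cyclic of that order: this is not automatic from Kummer theory in general, but it follows immediately from the equality $[k(\mac X):k(\mb P^n)]=d-m$ recalled in the introduction, which pins the minimal polynomial of $x_0$ down to $X^{d-m}+f_d/f_m$ itself.
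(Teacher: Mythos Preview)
Your proof is correct and follows essentially the same approach as the paper's: both directions hinge on Lemma~\ref{2.2}(2) to normalize $G_{\pi_P}$ to $\langle[\mathrm{diag}(e_{d-m},I_{n+1})]\rangle$, and then on comparing coefficients of $X_0^j$ in $A^*F=tF$ to force the binomial shape. The only difference is cosmetic: in $(\Leftarrow)$ you insert a Kummer-theory step to certify that the extension is Galois, whereas the paper skips this and concludes directly from $\alpha\in G_{\pi_P}$ having order $d-m$ together with $|G_{\pi_P}|\le[k(\mac X):k(\mb P^n)]=d-m$; your extra step is redundant but not wrong.
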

\begin{proof}
Since the case of characteristic zero was established in $[\ref{bio:ft14},\ \mathrm{Key\ Lemma}\ 3.1]$, we assume that $p>0$ and $\gcd(p,d-m)=1$.
We set $A:=
\begin{pmatrix}
e_{d-m} & 0 \\
0 & I_{n+1}
\end{pmatrix}$, and 
$\alpha:=[A]\in\mathrm{PGL}(n+2,k)$.
We show the only if part.
We assume that, after a suitable change of coordinates, 
 $P=[1:0:\cdots:0]$ and
$\mathcal{X}$ is defined by
\[
F_m(\X)X_0^{\,d-m}+F_d(\X)=0,
\]
where $F_u(\X) \in k[\X]_u$ for $u=m,d$.  
From the form of this defining equation, we see that $\alpha$
is an automorphism of $\mathcal{X}$ of order $d-m$.  
Since $P=[1:0:\cdots:0]$, we have $\alpha \in G_{\pi_P}$.  
As the projection $\pi_P \colon \mathcal{X} \dashrightarrow \mathbb{P}^n$ has degree $d-m$, $G_{\pi_P}=\langle \alpha\rangle$.
It follows that $P$ is a Galois point.  
Since $\alpha \in \mathrm{PGL}(n+2,k)$, the point $P$ is an extendable Galois point for $\mathcal{X}$.

We show the if part.
By replacing the coordinate system if necessary, 
$P=[1:0:\cdots:0]$. 
Since $P$ is an extendable Galois point for $\mac X$, by Lemma \ref{2.2} $(2)$ we may assume that 
 $G_{\pi_P}=\langle \alpha \rangle$.
Since the multiplicity of \(\mathcal{X}\) at \(P\) is \(m\), 
\[
F(X_0,\ldots, X_{n+1})
= \sum_{i=0}^{d-m} F_{d-i}(X_1,\ldots, X_{n+1}) X_0^{i},
\]
where \(F_{d-i}(X_1,\ldots,X_{n+1}) \in k[X_1,\ldots,X_{n+1}]_{d-i}\) for \(i=0,\ldots,d-m\). 
In particular, \(F_{m}(X_1,\ldots,X_{n+1})\neq 0\).  
Since \(\mathcal{X}\) is irreducible, \(F_d(X_1,\ldots,X_{n+1}) \neq 0\).  
Since $\alpha$ is an automorphism of \(\mathcal{X}\), and \(F_d(X_1,\ldots,X_{n+1})\neq 0\), it follows that  
\[
A^*F(X_0,\ldots, X_{n+1})=F(X_0,\ldots, X_{n+1}).
\]
On the other hand, we compute
\[
A^*F(X_0, \ldots, X_{n+1})
= \sum_{i=0}^{d-m} F_{d-i}(X_1,\ldots, X_{n+1})(e_{d-m} X_0)^i.
\]
Since this is equal to \(F(X_0,\ldots, X_{n+1})\), by comparing coefficients, we get that 
\[
F_{d-i}(X_1,\ldots, X_{n+1})
=e_{d-m}^iF_{d-i}(X_1,\ldots, X_{n+1})
\]
for $i=1,\ldots,d-m-1$.
Thus,
we conclude that
\[
F_{d-i}(X_1,\ldots, X_{n+1})=0
\]
for  $i=1,\ldots,d-m-1$. Thus the defining equation takes the desired form.
\end{proof}
From Theorem \ref{2.3}, in order to determine whether an irreducible hypersurface 
$\mathcal{X} \subset \mathbb{P}^{n+1}$ admits an extendable Galois point, 
the problem reduces to checking whether $\mathcal{X}$ admits an automorphism 
that is conjugate in $\mathrm{PGL}(n+2,k)$ to a matrix of the form
\[
\begin{pmatrix} 
	a & 0 \\
	0 & b I_{n+1}
\end{pmatrix}
\]
where $\frac{b}{a} = e_l$. 
In characteristic zero, when a hypersurface is smooth, 
the existence of such an automorphism can be determined 
from its fixed point set and the order of the automorphism, as we explain below.
\begin{thm}$([\ref{bio:bb16n}],\ \mr{cf.\ Corollaries}\ 24\ \mr{and}\ 32\ \mr{in\ the\ preprint}\ [\mr{arXiv}:1510.06192])$.\label{2.4}
Let $k$ be an algebraically closed field of characteristic zero. 
Let $\mac X$ be a smooth curve of degree $d\geq 5$ in $\mathbb P^2$ over $k$, and let $g\in{\rm Aut}(\mac X)$ be an automorphism of order $u(d-1)$ $(resp.\ ud)$ for $u\geq 1$. 
If $u\geq2$, then $\mac X$ has an inner $(resp.\ outer)$ Galois point $P$ and $G_{\pi_P}=\langle g^u\rangle$. 
\end{thm}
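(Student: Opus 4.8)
The plan is to reduce the statement to linear algebra over the eigenvalues of a lift of $g$, carry out a finite case analysis governed by where $\mac X$ meets the $g$-invariant lines, and finish with the criterion ``$P$ is a Galois point iff $|G_{\pi_P}|=d-m$'' from Section~2 (or, equivalently, Theorem~\ref{2.3}).

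First I would use the classical fact, part of [\ref{bio:acgh},\ref{bio:mm63},\ref{bio:y01g}], that every automorphism of a smooth plane curve of degree $d\geq 4$ is induced by an element of $\mathrm{PGL}(3,k)$: so $g=[M]$ with $M\in\mathrm{GL}(3,k)$, and since $\rmc(k)=0$ and $g$ has finite order we may take $M=\mathrm{diag}(\lambda_0,\lambda_1,\lambda_2)$. Set $o_{ij}:=\mathrm{ord}(\lambda_i/\lambda_j)$. From $\lambda_i/\lambda_j=(\lambda_i/\lambda_k)(\lambda_k/\lambda_j)$ one reads off that $N:=\mathrm{ord}(g)=\mathrm{lcm}(o_{01},o_{02},o_{12})$ and that the $\mathrm{lcm}$ of any two of the $o_{ij}$ already equals $N$. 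Put $h:=g^u$; since $u\mid N$ the order of $h$ is $N/u$, which is $d-1$ in the inner hypothesis and $d$ in the outer one, and $h=[\mathrm{diag}(\lambda_0^u,\lambda_1^u,\lambda_2^u)]$.

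The crux is to show that $h$ is a \emph{homology}, i.e.\ that two of $\lambda_0^u,\lambda_1^u,\lambda_2^u$ coincide and the ratio to the third has order exactly $d-1$ (resp.\ $d$); I would argue as follows. $(a)$ $g$ itself is not a homology: if, say, $\lambda_1=\lambda_2$, then $g$-invariance of $\mac X$ forces the exponents of $X_0$ occurring in the defining form $F$ to lie in one residue class modulo $o_{01}=N$; since $N>d$ this class meets $\{0,\dots,d\}$ in a single value, so $F=X_0^{i_0}\cdot(\text{binary form})$, contradicting irreducibility of $\mac X$. $(b)$ Hence $g$ has exactly the three fixed points $P_0,P_1,P_2$ and exactly the three invariant lines, the coordinate lines $L_0,L_1,L_2$. $(c)$ Not all of $P_0,P_1,P_2$ lie off $\mac X$, for otherwise each $L_i$ meets $\mac X$ only in free $\langle g|_{L_i}\rangle$-orbits, whence $o_{jk}\mid d$ for all pairs and $N\mid d$, impossible since $N>d$. $(d)$ For each $P_j\in\mac X$, smoothness makes the tangent line to $\mac X$ at $P_j$ a $g$-invariant line, hence one of the two coordinate lines through $P_j$; writing $\mac X\cap L_i=m_j\,P_j+m_k\,P_k+(\text{free }\langle g|_{L_i}\rangle\text{-orbits})$ for $\{j,k\}=\{0,1,2\}\setminus\{i\}$ gives $o_{jk}\mid(d-m_j-m_k)$, where $m_j\in\{0,1\}$ unless $L_i$ is that tangent line, in which case $2\leq m_j\leq d$.

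Running the relations of $(d)$ against ``the $\mathrm{lcm}$ of any two $o_{jk}$ equals $N=u(d-1)$ (resp.\ $ud$)'' and $u\geq 2$, through the finitely many configurations (which vertices lie on $\mac X$, and which coordinate line is tangent there), one finds in each case that either the configuration is impossible, or exactly one pair $o_{jk}$ equals $u$ --- so the corresponding two entries of $h$ become equal --- while the surviving ratio has order exactly $d-1$ (resp.\ $d$); moreover the configurations compatible with $N=u(d-1)$ are precisely those in which the centre of the homology $h$ lies on $\mac X$, and those with $N=ud$ are precisely those in which it lies off $\mac X$, so the inner/outer dichotomy is a consequence, not an assumption. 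Granting this, move the centre of $h$ to $P:=[1:0:0]$; then $h$, a homology with centre $P$ preserving $\mac X$, is represented by a matrix in $\mathrm{UT}(*,I_2)$ fixing the fibres of $\pi_P$, so $h\in G_{\pi_P}$ (cf.\ Lemma~\ref{2.1}), and $|\langle h\rangle|=d-1=d-m$ (resp.\ $d=d-m$) where $m$ is the multiplicity of $\mac X$ at $P$. As $|G_{\pi_P}|\leq[k(\mac X):k(\mb P^1)]=d-m$, equality holds, so $P$ is an extendable Galois point with $G_{\pi_P}=\langle h\rangle=\langle g^u\rangle$. The main obstacle is step $(d)$ together with the ensuing configuration bookkeeping: it is elementary but delicate, and it is exactly there that all of the hypotheses $\mathrm{ord}(g)\in\{u(d-1),ud\}$, $u\geq 2$, and $d\geq 5$ get used.
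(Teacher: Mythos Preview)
The paper does not prove Theorem~\ref{2.4}: it is quoted without proof from Badr--Bars~[\ref{bio:bb16n}] (Corollaries~24 and~32 of the preprint version), so there is no in-paper argument to compare your proposal against. Your outline is in fact a faithful sketch of the Badr--Bars method: lift $g$ to a diagonal matrix, rule out that $g$ itself is a homology because $\mr{ord}(g)>d$, and then run intersection-theoretic constraints on the three $g$-invariant lines to pin down the $o_{jk}$. The plan is sound, and the only substantive work you have deferred is the configuration bookkeeping after step~$(d)$, which you correctly flag as the place where the hypotheses $u\ge 2$ and $d\ge 5$ are actually consumed.

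One minor simplification worth noting: once you know $h=g^u$ is a homology of order $d-1$ (resp.\ $d$) preserving a smooth $\mac X$, the inner/outer dichotomy follows directly from the shape of the defining equation (cf.\ the argument of Theorem~\ref{2.3}) and need not be extracted from the case analysis. Indeed, a homology of order $d-1$ with centre $P\notin\mac X$ would force $F=X_0\bigl(cX_0^{d-1}+F_{d-1}(X_1,X_2)\bigr)$, which is reducible, while a homology of order $d$ with centre $P\in\mac X$ would force $F=F_d(X_1,X_2)$, equally impossible; so the centre automatically lies on $\mac X$ in the first case and off $\mac X$ in the second.
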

\begin{thm}\label{thm:6}$([\ref{bio:th21l},\ \mr{Theorems}\ 1.7,\,1.8,\ \mr{and}\ 1.9])$.
Let $k$ be an algebraically closed field of characteristic zero. 
Let $\mac X\subset\mathbb P^{n+1}$ be a smooth hypersurface of degree $d\geq 4$ over $k$, and let $g$ be an automorphism of $\mathcal X$ for $n\geq1$.
\begin{enumerate}
\item[$(1)$]We assume that $n=1$ and ${\rm ord}(g)=d-1$ $($resp. ${\rm ord}(g)=d)$.
Then $|{\rm Fix}(g)|\not=2$ $($resp. ${\rm Fix}(g)\not=\emptyset)$
if and only if $\mac X$ has an inner $($resp. outer$)$ Galois point $P$ and $G_{\pi_P}=\langle g\rangle$.
\item[$(2)$] We assume that $n=2$ and ${\rm ord}(g)=d-1$.
Then ${\rm Fix}(g)$ contains a curve $C'$ which is not a smooth rational curve if and only if $X$ has an inner Galois point $P$ and $G_{\pi_P}=\langle g\rangle$.
\item[$(3)$] We assume that $n\geq3$ and ${\rm ord}(g)=d-1$.
Then ${\rm Fix}(g)$ has codimension $1$ in $X$ if and only if $X$ has an inner Galois point $P$ and $G_{\pi_P}=\langle g\rangle$.
\item[$(4)$] We assume that $n\geq2$ and ${\rm ord}(g)=d$.
Then ${\rm Fix}(g)$ has codimension $1$ in $X$ if and only if $X$ has an outer Galois point $P$ and $G_{\pi_P}=\langle g\rangle$.
\end{enumerate}
\end{thm}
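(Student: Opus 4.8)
The plan is to reduce the statement to linear algebra on the eigenspace decomposition of the matrix inducing $g$, and then to invoke Theorem \ref{2.3} in its characteristic-zero form $[\ref{bio:ft14},\ \mathrm{Key\ Lemma}\ 3.1]$. First, since $\mac X$ is smooth of degree $d\geq 4$, every automorphism of $\mac X$ is induced by an element of $\mathrm{PGL}(n+2,k)$ by $[\ref{bio:mm63}]$ (see also $[\ref{bio:acgh},\ref{bio:y01g}]$), so $g=g_A$ for a matrix $A\in\mathrm{GL}(n+2,k)$, unique up to a scalar; as $g$ has finite order and $\rmc(k)=0$, the matrix $A$ is diagonalizable, and after a linear change of coordinates we may assume $A=\mathrm{diag}(\lambda_0,\ldots,\lambda_{n+1})$. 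Grouping the coordinates by common eigenvalue yields
\[\mathrm{Fix}(g)=\bigcup_\lambda\bigl(\mb P(E_\lambda)\cap\mac X\bigr),\qquad E_\lambda:=\ker(A-\lambda I_{n+2}),\]
a finite union of linear sections of $\mac X$: each $\mb P(E_\lambda)\cap\mac X$ is either all of $\mb P(E_\lambda)$ --- possible only if $\mb P(E_\lambda)\subset\mac X$, which by the classical bound on linear subspaces of a smooth hypersurface forces $\dim E_\lambda\leq\lfloor n/2\rfloor+1$ --- or a hypersurface of degree $d$ in $\mb P(E_\lambda)$, of codimension $n+2-\dim E_\lambda$ in $\mac X$. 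The normal form to aim for is $A\sim\mathrm{diag}(a,b,\ldots,b)$, that is, $A$ having exactly two eigenvalues, one of multiplicity $1$ and one of multiplicity $n+1$, since this is precisely the configuration in which Theorem \ref{2.3} produces a Galois point.

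For the ``only if'' direction, suppose $\mac X$ has an inner (resp.\ outer) Galois point $P$ with $G_{\pi_P}=\langle g\rangle$. By $[\ref{bio:ft14},\ \mathrm{Key\ Lemma}\ 3.1]$ we may choose coordinates with $P=[1:0:\cdots:0]$ so that $\mac X$ is defined by $F_1(\X)X_0^{d-1}+F_d(\X)=0$ (resp.\ $X_0^{d}+F_d(\X)=0$) and $g$ is induced by $\mathrm{diag}(e_{d-1},1,\ldots,1)$ (resp.\ $\mathrm{diag}(e_d,1,\ldots,1)$). A direct computation then gives $\mathrm{Fix}(g)=(\{X_0=0\}\cap\mac X)\cup\{P\}$ in the inner case and $\mathrm{Fix}(g)=\{X_0=0\}\cap\mac X$ in the outer case, and $\{X_0=0\}\cap\mac X\cong\{F_d=0\}\subset\mb P^{n}$ is a smooth hypersurface of degree $d\geq 4$. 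From this, all four fixed-locus conclusions follow: for $n\geq 3$, and for $n=2$ in the outer case, it has codimension $1$ in $\mac X$; for $n=2$ in the inner case it is a smooth plane curve of degree $d\geq 4$, hence of genus $\geq 3$ and not a smooth rational curve; for $n=1$ it consists of $d$ distinct points, so $|\mathrm{Fix}(g)|=d+1\neq 2$ in the inner case and $\mathrm{Fix}(g)\neq\emptyset$ in the outer case.

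The substantive direction is ``if''. Assuming the relevant fixed-locus hypothesis, the key step is to deduce that $A$ has eigenvalue multiplicities $(1,n+1)$. For $n\geq 3$ this is immediate: a contained linear subspace $\mb P(E_\lambda)\subset\mac X$ has codimension $n+1-\dim E_\lambda\geq\lceil n/2\rceil\geq 2$ in $\mac X$, so a codimension-$1$ component of $\mathrm{Fix}(g)$ must be a proper section $\mb P(E_\lambda)\cap\mac X$, forcing $\dim E_\lambda=n+1$ and leaving the remaining eigenvalue of multiplicity $1$. For $n=2$, a non-smooth-rational curve in $\mathrm{Fix}(g)$ cannot be a line $\mb P(E_\lambda)$ with $\dim E_\lambda=2$ lying on $\mac X$ (such a line is a smooth rational curve) and is not an isolated fixed point, so it must be a section $\mb P(E_\lambda)\cap\mac X$ with $\dim E_\lambda=3$, again forcing the split; for $n=2$ in the outer case, and for $n=1$, one invokes smoothness of $\mac X$ together with $g$-invariance of the defining equation to exclude the remaining configurations --- a line of $\mac X$ arising as the projectivization of a $2$-dimensional eigenspace, or, for $n=1$, an ``accidental'' incidence of coordinate points with $\mac X$ when all eigenvalues of $A$ are distinct; a careful analysis of which coordinate points can lie on a smooth curve invariant under such a $g$ shows that this case falls outside the hypothesis. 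Once $A\sim\mathrm{diag}(a,b,\ldots,b)$ is established, set $P:=[1:0:\cdots:0]$ in the diagonalizing coordinates; then $[A]$ has order $\mathrm{ord}(a/b)$, so the hypothesis $\mathrm{ord}(g)=d-1$ (resp.\ $d$) forces $a/b=e_{d-1}$ (resp.\ $e_d$). Feeding the $g$-invariance $A^{\ast}F=tF$ into the monomial comparison used in the proof of Theorem \ref{2.3} shows that $F=F_m(\X)X_0^{d-m}+F_d(\X)$ with $m=1$ (resp.\ $m=0$), and then Theorem \ref{2.3} (characteristic-zero form) identifies $P$ as an inner (resp.\ outer) Galois point with $G_{\pi_P}=\langle[A]\rangle=\langle g\rangle$.

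The main obstacle is exactly the case analysis of the preceding paragraph: converting the qualitative fixed-locus hypotheses into the rigid eigenvalue split $(1,n+1)$ requires excluding every configuration in which $\mathrm{Fix}(g)$ superficially meets the hypothesis for the ``wrong'' reason --- a linear subspace of $\mac X$ cut out by a multi-dimensional eigenspace, or a Hessian-type automorphism $\mathrm{diag}(1,\zeta,\zeta^2,\ldots)$ with all eigenvalues distinct whose fixed locus is a prescribed set of coordinate points. Smoothness of $\mac X$ --- bounds on its linear subspaces, the singularity forced by a line contained in it in small codimension, and the smoothness constraints imposed on the coefficients of the defining equation $F$ --- is the indispensable input here, and it is exactly why the low-dimensional cases $n\in\{1,2\}$ are given the bespoke hypotheses ``$|\mathrm{Fix}(g)|\neq 2$'', ``$\mathrm{Fix}(g)\neq\emptyset$'', and ``contains a curve which is not a smooth rational curve'' rather than the uniform ``codimension $1$'' available when $n\geq 3$.
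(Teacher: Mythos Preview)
The paper does not give its own proof of this theorem: it is quoted verbatim as a result of \cite{bio:th21l} (Theorems 1.7, 1.8, 1.9) and is used only as background in Section~2 to motivate the study of Galois groups via fixed loci. There is therefore no in-paper argument to compare your proposal against.

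That said, your outline is the natural strategy and is essentially the one carried out in \cite{bio:th21l}: lift $g$ to a diagonalizable $A\in\mathrm{GL}(n+2,k)$, identify $\mathrm{Fix}(g)$ with the union of the eigenspace sections $\mathbb{P}(E_\lambda)\cap\mathcal X$, and show that the fixed-locus hypothesis forces the eigenvalue multiplicities to be $(1,n+1)$, after which Theorem~\ref{2.3} finishes. Your ``only if'' direction and the ``if'' direction for $n\ge 3$ are complete as written. The genuine work that you have flagged but not done is the exclusion, for $n\in\{1,2\}$, of the configurations where $A$ has more than two eigenvalues (or two eigenvalues each of multiplicity $\ge 2$) yet $\mathrm{Fix}(g)$ accidentally satisfies the hypothesis; in \cite{bio:th21l} this is handled by writing down the $g$-semi-invariant monomials of $F$ for each candidate diagonal form of $A$ of the prescribed order and checking that smoothness of $\mathcal X$ (via the Jacobian criterion at the coordinate points) is incompatible with the required incidence or containment. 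Your sketch correctly isolates where this case analysis lives, but as written the phrases ``a careful analysis\ldots shows'' and ``one invokes smoothness\ldots to exclude'' are placeholders, not arguments.
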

It is important to emphasize that, when $n \geq 2$, 
the presence of an automorphism of large order does not by itself guarantee the existence of a Galois point. 
Concrete counterexamples can be found in situations similar to those described in Theorem~\ref{2.4} 
([\ref{bio:th21l},\ Examples 2.9 and 3.9]). 
To determine whether a smooth hypersurface admits a Galois point by using automorphisms, 
one must also examine the fixed point set of the automorphism 
([\ref{bio:th21l},\ Theorem 1.10]).
In [\ref{bio:th23r}], it is shown that the Galois property of a given point $P\in\mathbb P^{n+1}$ can also be determined geometrically by examining the branching divisors and ramification indices of the projection with center $P$.

As stated in Theorem \ref{2.3}, the defining equations of irreducible hypersurfaces 
admitting extendable Galois points in characteristic zero, or non-wild Galois points in positive characteristic, 
are determined. 
In contrast, the defining equations of irreducible hypersurfaces with wild Galois points 
are still largely unknown. 
So far, explicit studies exist only in two cases: inner wild Galois points for smooth plane quartics 
in characteristic $3$, and extendable outer wild Galois points when $d = p^l$ with $l \geq 1$.
\begin{thm}\label{2.5}$([\ref{bio:f06},\ {\rm Proposition}\ 1])$.
Let $k$ be an algebraically closed field of $\rmc(k)=3$, and let $[X:Y:Z]$ be the coordinate system of $\mb P^2$.
A smooth quartic curve $C\subset\mb P^2$ has an inner Galois point if and only if $C$ is projectively equivalent to the curve defined by either of the following two forms:
\begin{enumerate}
\item[$(1)$]$X^{3}Z-XZ^{3}+Y^{4}+a_{3}Y^{3}Z+a_{2}Y^{2}Z^{2}+a_{1}YZ^{3}$,
\item[$(2)$]$X^{3}Y-XYZ^{2}+a_{3}Y^{3}Z+a_{2}Y^{2}Z^{2}+a_{1}YZ^{3}+Z^{4}$,
\end{enumerate}
where $a_{i}(i=1,2,3)$ are constants, and in the case $(2)$ we require $a_{3}\neq 0$ . 
Moreover, the dual map of $C$ is inseparable onto its dual if and only if $C$ is in the case $(1)$ with $a_{2}=0$.
\end{thm}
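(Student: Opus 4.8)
The plan is to first convert the hypothesis that $C$ carries an inner Galois point into a normal form for the defining quartic, then to split into two cases and normalize each, and finally to treat smoothness, the converse, and the dual map. Since $C$ is smooth, $P$ is extendable, so after a projective coordinate change we may take $P=[1:0:0]$ and $G_{\pi_P}=\langle\alpha\rangle$ with $\mathrm{ord}(\alpha)=3$. By Lemma~\ref{2.1}, $\alpha=[A]$ for a unique $A\in\mathrm{UT}(*,I_2)$; computing $A^3$ shows, using $\rmc(k)=3$, that the $(1,1)$-entry of $A$ equals $1$ and that $A^3=I_3$, so $A=I_3+N$ where $N$ has (at most) the two nonzero entries in positions $(1,2),(1,3)$, not both zero. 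Writing $L:=bY+cZ\neq 0$ for the corresponding linear form (coordinates $X_0=X$, $X_1=Y$, $X_2=Z$), the invariance $A^\ast F=F$ (the scalar is $1$ because $A^3=I_3$) reads $F(X+L,Y,Z)=F(X,Y,Z)$. Expanding $F=\sum_{i=0}^{4}X^iG_{4-i}(Y,Z)$, using $(X+L)^3=X^3+L^3$ in characteristic $3$, and comparing coefficients of powers of $X$, I would deduce $G_2=0$ and $G_3=-L^2G_1$, hence
\[
F=G_1(Y,Z)\,X(X-L)(X+L)+G_4(Y,Z),
\]
where $G_1\neq 0$ is linear (because $C$ has multiplicity $1$ at $P$) and $G_4\neq 0$ has degree $4$ (because $C$ is irreducible).

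Next I would distinguish whether $G_1$ and $L$ are proportional. If they are linearly independent, a coordinate change fixing the variable $X$ (hence $P$) turns the pair $(G_1,L)$ into $(Y,Z)$, giving $F=X^3Y-XYZ^2+G_4(Y,Z)$; the substitutions $X\mapsto X+\lambda Y+\mu Z$ alter $G_4$ by $Y(\lambda Y+\mu Z)^3-Y(\lambda Y+\mu Z)Z^2=\lambda^3Y^4-\lambda Y^2Z^2+(\mu^3-\mu)YZ^3$, and since $t\mapsto t^3$ and $t\mapsto t^3-t$ are surjective on $k$, a combination of these substitutions with rescalings kills the $Y^4$-coefficient of $G_4$ and normalizes the $Z^4$-coefficient to $1$. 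A direct computation of $F_X,F_Y,F_Z$ in characteristic $3$ then shows $C$ is singular exactly when the $Y^3Z$-coefficient vanishes, and smooth otherwise; this is form $(2)$ with $a_3\neq 0$. If instead $G_1=\lambda L$, a similar coordinate change gives $F=X^3Z-XZ^3+G_4(Y,Z)$; irreducibility forces the $Y^4$-coefficient of $G_4$ to be nonzero (otherwise $Z\mid F$), which we normalize to $1$, and $X\mapsto X+\mu Z$ then kills the $Z^4$-coefficient; the Jacobian criterion shows every member of this family is smooth, which is form $(1)$. For the converse, one checks directly that each of the two displayed families consists of smooth (hence irreducible) quartics through $[1:0:0]$ of multiplicity $1$, and that $X\mapsto X+Z$ induces an automorphism of order $3$ lying in $G_{\pi_P}$; since $|G_{\pi_P}|\le d-m=3$, the point $[1:0:0]$ is then an inner Galois point.

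For the last assertion I would compute the Gauss (dual) map $\gamma=[F_X:F_Y:F_Z]\colon C\to(\mb P^2)^{\vee}$ for each normal form. In case $(1)$ with $a_2=0$, each of $F_X,F_Y,F_Z$ is the cube of a linear form (using $\rmc(k)=3$), and the three linear forms form a coordinate system; hence $\gamma$ equals the Frobenius of $\mb P^2$ composed with a projective-linear automorphism, so $\gamma\mid_C$ is purely inseparable of degree $3$. In the remaining cases -- case $(1)$ with $a_2\neq 0$ and case $(2)$ -- I would show $\gamma$ is separable by checking that $d\gamma\neq 0$ at a general point of $C$: in an affine chart where the tangent direction to $C$ is $(F_Z:-F_Y)$, the derivative of $\gamma$ along $C$ is, up to a common nonzero factor, the vector with entries $F_{iY}F_Z-F_{iZ}F_Y$ for $i\in\{X,Y,Z\}$, and, via Euler's identity and the irreducibility of $F$, one verifies this vector is not proportional to $(F_X,F_Y,F_Z)$ along $C$.

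The conceptual core is the characteristic-$3$ linearization of the first paragraph, which is what makes the normal form so rigid; once it is in place, the case analysis is routine, although the successive rescalings needed to land exactly on the equations $(1)$ and $(2)$ (and to determine precisely which coefficients must vanish or be nonzero) require some bookkeeping. I expect the genuinely delicate step to be the ``only if'' direction of the final assertion: ruling out inseparability of the Gauss map in every case other than $(1)$ with $a_2=0$ requires a clean handle on the differential $d\gamma$ rather than a mere Frobenius factorization, and one must take care that the non-proportionality computation is carried out at a genuinely general point of $C$.
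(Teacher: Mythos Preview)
The paper does not prove this theorem; it is quoted verbatim as Proposition~1 of Fukasawa's paper~[\ref{bio:f06}], so there is no in-paper proof to compare against. Your strategy is in fact the natural one and matches the shape of the argument in the original source: use extendability to place $P=[1:0:0]$, use Lemma~\ref{2.1} and $\rmc(k)=3$ to force $A\in\mathrm{UT}(*,I_{2})$ with $(1,1)$-entry $1$, deduce the factorization $F=G_1\,X(X-L)(X+L)+G_4$, and then split on whether $G_1$ and $L$ are proportional. Your smoothness checks (every curve in family~$(1)$ is smooth; family~$(2)$ is smooth iff $a_3\neq 0$) and your Frobenius factorization of the Gauss map in case~$(1)$ with $a_2=0$ are correct computations.

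One small wrinkle in your normalization for case~$(2)$: the substitution $X\mapsto X+\lambda Y+\mu Z$ changes $G_4$ only in its $Y^4$, $Y^2Z^2$, and $YZ^3$ coefficients, so it cannot touch the $Z^4$-coefficient; to set that coefficient to $1$ you genuinely need the rescalings you mention, and you must first observe that after killing the $Y^4$-term the $Z^4$-coefficient is nonzero (else $Y\mid F$, contradicting irreducibility). Also, the surjectivity of $\mu\mapsto\mu^3-\mu$ is not needed to reach the normal form~$(2)$, since $a_1$ is retained as a free parameter; that freedom is instead what shows the family really is three-dimensional. These are bookkeeping issues, not gaps. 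The separability argument for the Gauss map in the remaining cases is, as you note, the most delicate step; computing $d\gamma$ along $C$ and checking non-proportionality to $(F_X,F_Y,F_Z)$ does work, and the Hessian criterion (the Gauss map is inseparable iff the Hessian vanishes on $C$) gives a cleaner route if you want one.
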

\begin{thm}\label{2.6}$([\ref{bio:f142},\ {\rm Proposition}\ 1])$.
Let $k$ be an algebraically closed field of characteristic $p>0$, and let $[X:Y:Z]$ be the coordinate system of $\mb P^2$.
Let $C \subset \mathbb{P}^2$ be an irreducible curve of degree $p^e$. Assume that $P =[1 : 0 : 0]\in\mb P^2$ is an extendable outer Galois point, and let 
\[
g_0(x, y) = \prod_{\tilde{\sigma} \in \tilde{G}_P} \tilde{\sigma}^* x = \prod_{\sigma \in G_P} (x + a_{12}(\sigma)y + a_{13}(\sigma)),
\]
where $x:=\frac{X}{Z}$ and $y:=\frac{Y}{Z}$.
Then, we have the following:
\begin{enumerate}
		\item[$(1)$] $G_{\pi_P} \cong (\mathbb{Z}/p\mathbb{Z})^e$.
		\item[$(2)$] $g_0(x, y) \in K[y][x]$ has only terms of degree a power of $p$ in the variable $x$.
		\item[$(3)$] The defining equation of $C$ on the affine space $Z\neq0$ is of the form $g_0(x, y) + h(y) = 0$.
\end{enumerate}
\end{thm}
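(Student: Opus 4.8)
The plan is to reduce everything to the upper‑triangular representation of $G_{\pi_P}$ provided in Section~2, together with one classical fact about additive polynomials. For part $(1)$: since $P=[1:0:0]\notin C$ its multiplicity is $m=0$, so $\pi_P$ has degree $d-m=p^e$ and $|G_{\pi_P}|=p^e$; and since $P$ is extendable, Lemma~\ref{2.2}$(1)$ gives a lift $G\subset\mr{UT}(*,I_2)$ with $pr\mid_G\co G\to G_{\pi_P}$ an isomorphism, where a typical element is $A=\left(\begin{smallmatrix}a_{11}&a_{12}&a_{13}\\0&1&0\\0&0&1\end{smallmatrix}\right)$. I would then consider the homomorphism $\varphi\co G\to k^\times$, $A\mapsto a_{11}$. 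Its image is a finite subgroup of $k^\times$, hence cyclic of order prime to $p$; its kernel consists of the matrices with $a_{11}=1$, and, exactly as in the proof of Lemma~\ref{2.2}, such matrices satisfy $A^p=I_3$, so $\ker\varphi$ is a finite subgroup of an additive group, i.e.\ an elementary abelian $p$‑group. Comparing orders in $p^e=|G|=|\ker\varphi|\cdot|\mr{im}\,\varphi|$ forces $\mr{im}\,\varphi$ to be trivial, so $G=\ker\varphi\cong(\mb Z/p\mb Z)^e$, which is $(1)$.

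Once $(1)$ holds, every $\sigma\in G_{\pi_P}$ has $(1,1)$‑entry $1$, and the multiplication rule in $\mr{UT}(*,I_2)$ shows $\sigma\mapsto(a_{12}(\sigma),a_{13}(\sigma))$ is an \emph{injective} homomorphism $G_{\pi_P}\hookrightarrow(k,+)^{\oplus2}$ with image an $\mb F_p$‑subspace of dimension $e$. Since $\sigma^*x=x+a_{12}(\sigma)y+a_{13}(\sigma)$, composing with the $k$‑linear injection $(b_1,b_2)\mapsto b_1y+b_2$ shows that $S:=\{\,a_{12}(\sigma)y+a_{13}(\sigma)\mid\sigma\in G_{\pi_P}\,\}$ is an $\mb F_p$‑subspace of the field $k(y)$ of dimension $e$, and, because $S=-S$, that $g_0(x,y)=\prod_{s\in S}(x+s)=\prod_{s\in S}(x-s)$. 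For $(2)$ the key input is the classical fact that the vanishing polynomial $\prod_{s\in S}(T-s)$ of a finite $\mb F_p$‑subspace $S$ of a field of characteristic $p$ is additive, i.e.\ of the form $\sum_i c_iT^{p^i}$; I would prove it by induction on $\dim_{\mb F_p}S$, picking a codimension‑one subspace $S'\subset S$ and $s_0\in S\setminus S'$, writing $S=\coprod_{j\in\mb F_p}(S'+js_0)$, and using that, for the inductively additive vanishing polynomial $h$ of $S'$, $\prod_{s\in S}(T-s)=\prod_{j\in\mb F_p}h(T-js_0)=\prod_{j\in\mb F_p}\bigl(h(T)-jh(s_0)\bigr)=h(T)^p-h(s_0)^{p-1}h(T)$, which is additive. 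Applying this over $k(y)$ gives $g_0(x,y)=\sum_{i=0}^{e}c_i(y)x^{p^i}$ with $c_e=1$ and $c_i(y)\in k[y]$ (integrality being clear from the explicit product), which is assertion $(2)$; it also records the identity $g_0(T-x,y)=g_0(T,y)-g_0(x,y)$.

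For $(3)$: translating $S$ by any $s_\tau$ merely permutes it, so $g_0(x,y)$ is $G_{\pi_P}$‑invariant, hence its image in $k(C)$ lies in $k(C)^{G_{\pi_P}}=k(y)$; call it $h_0(y)$. Because $\sigma\mapsto s_\sigma$ is injective, the $p^e$ conjugates $\sigma^*x=x+s_\sigma$ are pairwise distinct, so the minimal polynomial of $x$ over $k(y)$ is $\prod_{\sigma}(T-\sigma^*x)=g_0(T-x,y)=g_0(T,y)-h_0(y)$ by the additivity from $(2)$. On the other hand, since $P\notin C$ the affine equation $f(x,y)=0$ of the irreducible curve $C$ has $x$‑degree exactly $p^e=\deg C$ with nonzero \emph{constant} leading coefficient, so after scaling $f$ is monic in $x$ and therefore coincides with this minimal polynomial; comparing coefficients (which lie in $k[y]$ on the $g_0$ side) then forces $h_0\in k[y]$ and shows $C$ is cut out by $g_0(x,y)-h_0(y)=0$, i.e.\ by $g_0(x,y)+h(y)=0$ with $h:=-h_0\in k[y]$.

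I expect the main obstacle to be $(2)$, namely recognizing $g_0$ as an additive polynomial over $k(y)$; the remaining steps are essentially bookkeeping with the upper‑triangular representation of $G_{\pi_P}$ and the homomorphism $\varphi$. A secondary point requiring care is the passage from the minimal polynomial of $x$ over $k(y)$ to the honest defining polynomial of $C$, which uses crucially that $P$ is an \emph{outer} Galois point, so that $\deg_x f=\deg C=p^e$ and the leading $x$‑coefficient of $f$ is a nonzero scalar.
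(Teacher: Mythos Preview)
The paper does not prove Theorem~\ref{2.6}; it is quoted from~[\ref{bio:f142}] as background. What the paper does prove are generalizations: Theorem~\ref{4.1} extends part~$(1)$ to arbitrary $n$ and arbitrary $d-m$ divisible by $p$, and Theorem~\ref{4.7} extends part~$(3)$ likewise. Your proof is correct, so the relevant comparison is with those.

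For $(1)$ your argument is exactly the method of Theorem~\ref{4.1}: lift to $\mr{UT}(*,I_{n+1})$ via Lemma~\ref{2.2}(1), project onto the $(1,1)$-entry, and read off the structure of kernel and image. Nothing to add.

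For $(2)$ your additive-polynomial argument is standard and correct. The paper has no analogue of this step, because in the general setting of Theorem~\ref{4.7} the degree $d-m=p^u l$ need not be a $p$-power and the product $\prod_{g\in G}g^*X_0$ is not additive.

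For $(3)$ your route genuinely differs from the paper's. You exploit the additivity established in~$(2)$: the $G_{\pi_P}$-invariance of $g_0(x,y)$ gives $g_0(x,y)=h_0(y)$ in $k(C)$, additivity turns $\prod_\sigma(T-\sigma^*x)$ into $g_0(T,y)-h_0(y)$, and comparison with the monic-in-$x$ defining polynomial finishes. The paper's Theorem~\ref{4.7} instead works in an algebraic closure $K$ of $k(X_1,\ldots,X_{n+1})$, factors $F$ over $K$ as $F_m\cdot\prod_j(X_0-H_j)+F_d$, and shows that $G$ permutes the linear factors, forcing $\prod_j(X_0-H_j)=\prod_{g\in G}g^*X_0$. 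Your approach is lighter and stays inside $k[y][x]$, but it leans essentially on the additivity from~$(2)$ and hence on $d=p^e$; the paper's factorization argument makes no use of additivity and therefore goes through for any $d-m$ divisible by $p$, which is precisely what is needed for the generalization.
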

In Theorem \ref{4.7}, we determine the defining equations of normal hypersurfaces possessing extendable wild Galois points, thereby generalizing the above results.
\section{Proof of main theorems}
Throughout this paper, we work over an algebraically closed field $k$ of positive characteristic $p>0$. 
Unless otherwise stated, projective spaces and hypersurfaces are defined over $k$.
We begin the preparation for the proof of Theorem~\ref{1.1} and the if part of Theorem~\ref{1.2}~(2).
\begin{lem}\label{4.0}
Let $P:=[1:0:\cdots:0]\in\mb P^{n+1}$ be a point, 
let $\mac X\subset \mb P^{n+1}$ be an irreducible hypersurface of degree $d\geq3$, and let $G_{\pi_P}$ be the group defined in $(\ref{1})$.
We assume that there exists an element $g\in G_{\pi_P}$ such that $g\in\mathrm{PGL}(n+2,k)$.
Then we have the following:
\begin{enumerate}
\item[$(1)$]If \(g^{p^l} = e\) for some $l\geq1$, then $g^p=e$. 
\item[$(2)$]Let $A\in \mathrm{UT}(*,I_{n+1})$ be a matrix such that $g=[A]$.
Then $g^p=e$ if and only if the \((1,1)\)-entry of \(A\) equals \(1\).
\end{enumerate}
\end{lem}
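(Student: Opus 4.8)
The plan is to reduce both statements to a single block computation using the canonical representative supplied by Lemma~\ref{2.1}. Since $g\in G_{\pi_P}$ and $g\in\mathrm{PGL}(n+2,k)$, there is a unique $A\in\mathrm{UT}(*,I_{n+1})$ with $g=[A]$; write
\[
A=\begin{pmatrix} a & \overline{b}\\ 0 & I_{n+1}\end{pmatrix},\qquad a\in k^{\times},\ \overline{b}\in k^{n+1}.
\]
A routine induction on $k\geq 1$ then gives
\[
A^{k}=\begin{pmatrix} a^{k} & (1+a+\cdots+a^{k-1})\,\overline{b}\\ 0 & I_{n+1}\end{pmatrix}.
\]
I would first record the dictionary I intend to use: $g^{k}=[A^{k}]$ in $\mathrm{PGL}(n+2,k)$, so $g^{k}=e$ holds precisely when $A^{k}$ is a scalar matrix; and since the lower-right $(n+1)\times(n+1)$ block of $A^{k}$ equals $I_{n+1}$, any such scalar must be $1$. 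Hence $g^{k}=e$ is equivalent to $A^{k}=I_{n+2}$, i.e.\ to $a^{k}=1$ together with $(1+a+\cdots+a^{k-1})\overline{b}=\overline{0}$.

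To prove~(2), suppose first that the $(1,1)$-entry $a$ of $A$ equals $1$. Then $1+a+\cdots+a^{p-1}=p\cdot 1=0$ in $k$, so the formula above gives $A^{p}=I_{n+2}$, that is, $g^{p}=e$. Conversely, if $g^{p}=e$, then $a^{p}=1$; using the characteristic-$p$ identity $a^{p}-1=(a-1)^{p}$ and the fact that $k$, being a field, has no nonzero nilpotents, we conclude $a=1$. This establishes the equivalence in~(2).

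For~(1), assume $g^{p^{l}}=e$ for some $l\geq 1$. By the dictionary above, $a^{p^{l}}=1$, and writing $a^{p^{l}}-1=(a-1)^{p^{l}}$ we again obtain $a=1$; part~(2) then yields $g^{p}=e$.

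The argument is elementary and I do not anticipate a genuine obstacle. The two points that merit a little care are: that the $I_{n+1}$-block of $A$ is preserved under taking powers, which is what forces a scalar power of $A$ to be the identity matrix rather than an arbitrary homothety; and the use of the Frobenius identity (and its $l$-fold iterate) to pass from $a^{p^{l}}=1$ to $a=1$, which is the only place positive characteristic is genuinely used.
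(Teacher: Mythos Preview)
Your proof is correct and follows essentially the same route as the paper's own argument: lift $g$ to its unique representative in $\mathrm{UT}(*,I_{n+1})$, observe that the identity block forces any scalar power to be the identity matrix, and use the Frobenius identity $(a-1)^{p^{l}}=a^{p^{l}}-1$ to deduce $a=1$. One small cosmetic point: you use $k$ both for the ground field and for the exponent in $A^{k}$, so rename the latter before inclusion.
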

\begin{proof}
By Lemma \ref{2.1}, for $g\in G_{\pi_P}\cap \mathrm{PGL}(n+2,k)$, there is a unique matrix  $A\in \mr{UT}(*,I_{n+1})$ such that 
$g=[A]$.
We denote by $a_{1i} \in k$ its $(1,i)$-entry for $i=1,\ldots,n+2$.
We assume that $g^{p^l}=e$ for some $l\geq1$.
	Then $A^{p^l} = t I_{n+2} \quad \text{for some } t \in k^{\times}$.
Since the $(i,i)$-entry of $A^{p^l}$ equals $1$ for $i=2,\ldots,n+2$, it follows that $t=1$. 
Since the $(1,1)$-entry of $A^{p^l}$ is given by $a_{11}^{p^l}$, we obtain $a_{11}^{p^l}=1$.
	Since $p=\rmc(k)$, we obtain $a_{11}=1$.
Since $A^p=
\begin{pmatrix} 
1 & pa_{12} & \dots  & pa_{1n+2} \\
0 & 1 & \dots  & 0\\
\vdots & \vdots & \ddots & \vdots \\
0 & 0& \dots  & 1
\end{pmatrix}$ and $p=\rmc(k)$, we get  $A^p=I_{n+2}$. Thus, $g^p=e$.
Conversely, if $a_{11}=1$, then by the above $g^p=e$.
\end{proof}
Theorem~\ref{1.1} is derived from Theorem~\ref{4.1} (1). The if part of 
Theorem~\ref{1.2} (2) is obtained from Theorem~\ref{4.1} (2).
\begin{thm}\label{4.1}
Let \( \mathcal{X} \subset \mathbb{P}^{n+1} \) be an irreducible hypersurface of degree \( d \) with an extendable Galois point \( P \in \mathbb{P}^{n+1} \),
 and let $G_{\pi_P}$ be the Galois group of the projection $\pi_P\colon \mathcal X\dashrightarrow \mathbb P^n$.
Let $m$ be the multiplicity \( m \) of $\mac{X}$ at \( P \).
We assume that \( d - m \) is a multiple of \( p \).
We set $d-m=p^ul$ with $\gcd(p,l)=1$.
Then we have the following:
\begin{enumerate}
\item[$(1)$]The number $l$ divides $p^u-1$, and $G\cong (\mathbb{Z}/p\mathbb{Z})^{\oplus u} \rtimes \mathbb{Z}/l\mathbb{Z}$.
\item[$(2)$]There exist a subgroup $G \subset \mathrm{UT}(*,I_{n+1})$ and an element $\alpha \in \mathrm{PGL}(n+2,k)$ such that the homomorphism
$pr\mid_G \colon G \longrightarrow \alpha G_{\pi_P} \alpha^{-1}$
is an isomorphism, and
\[
G = \Biggl\langle 
\begin{pmatrix}
e_l & 0 \\
0 & I_{n+1}
\end{pmatrix},\ G_p 
\Biggr\rangle
\]
where $G_p$ is a Sylow $p$-subgroup of $G$. 
Moreover, $G$ satisfies the following conditions:
\begin{itemize}
\item[$(i)$]\( \dim \mathrm{im}(A - I_{n+2}) = 1 \) for some \( A \in G \);
\item[$(ii)$]\( \mathrm{im}(B - I_{n+2}) = \mathrm{im}(C - I_{n+2}) \) for all \( B, C \in G \setminus \{I_{n+2}\} \).
\end{itemize}
\end{enumerate}
\end{thm}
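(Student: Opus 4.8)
The plan is to replace $G_{\pi_P}$ by a concrete matrix group inside $\mathrm{UT}(*,I_{n+1})$ and read off all the structure from the $(1,1)$-entry. First, by Lemma~\ref{2.2}~(1) there is a subgroup $G_0\subset\mathrm{UT}(*,I_{n+1})$ with $pr\mid_{G_0}\colon G_0\to G_{\pi_P}$ an isomorphism. The map $\varphi\colon G_0\to k^{\times}$ sending a matrix to its $(1,1)$-entry is a group homomorphism, by the multiplication rule for $\mathrm{UT}(*,I_{n+1})$ recorded in the proof of Lemma~\ref{2.2}~(1). Its kernel $V$ is exactly the set of matrices $\left(\begin{smallmatrix}1&\overline{b}\\0&I_{n+1}\end{smallmatrix}\right)$, which is a subgroup isomorphic to a subgroup of $(k^{n+1},+)$, hence an elementary abelian $p$-group; its image is a finite subgroup of $k^{\times}$, hence cyclic of order prime to $p$. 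Since $|G_0|=d-m=p^{u}l$ with $\gcd(p,l)=1$, comparing the $p$-part and the prime-to-$p$ part across $1\to V\to G_0\to\mathrm{im}(\varphi)\to 1$ forces $|V|=p^{u}$ and $\mathrm{im}(\varphi)=\langle e_l\rangle$ (for $l\geq 2$; if $l=1$ then $V=G_0$). In particular $V\cong(\mathbb{Z}/p\mathbb{Z})^{\oplus u}$ is a normal Sylow $p$-subgroup of $G_0$, being $\ker\varphi$.

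Next I would diagonalize the prime-to-$p$ part. Choose $A_1\in G_0$ with $\varphi(A_1)=e_l$ (when $l\geq 2$; take $A_1=I_{n+2}$ and skip the conjugation if $l=1$), write $A_1=\left(\begin{smallmatrix}e_l&\overline{b_1}\\0&I_{n+1}\end{smallmatrix}\right)$, and conjugate by $B:=\left(\begin{smallmatrix}1&\overline{b_1}/(e_l-1)\\0&I_{n+1}\end{smallmatrix}\right)\in\mathrm{UT}(*,I_{n+1})$ exactly as in the proof of Lemma~\ref{2.2}~(2), so that $BA_1B^{-1}=D:=\left(\begin{smallmatrix}e_l&0\\0&I_{n+1}\end{smallmatrix}\right)$. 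Put $G:=BG_0B^{-1}\subset\mathrm{UT}(*,I_{n+1})$ and $\alpha:=[B]\in\mathrm{PGL}(n+2,k)$; then $pr\mid_{G}\colon G\to\alpha G_{\pi_P}\alpha^{-1}$ is an isomorphism. Conjugation by $B$ preserves $(1,1)$-entries, so $G_p:=BVB^{-1}=\ker(\varphi\mid_{G})$ is again a normal Sylow $p$-subgroup, of order $p^{u}$. The point of diagonalizing is that $D$ has order exactly $l$ (its only non-trivial eigenvalue is $e_l$), so $\langle D\rangle\cap G_p=\{I_{n+2}\}$ and, comparing orders, $G=G_p\rtimes\langle D\rangle\cong(\mathbb{Z}/p\mathbb{Z})^{\oplus u}\rtimes\mathbb{Z}/l\mathbb{Z}$ and $G=\langle D,G_p\rangle$. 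This yields the isomorphism assertion in (1) and the generation statement in (2). Conditions (i) and (ii) are then automatic: for every $A=\left(\begin{smallmatrix}a&\overline{b}\\0&I_{n+1}\end{smallmatrix}\right)\in\mathrm{UT}(*,I_{n+1})$ the image of $A-I_{n+2}$ is contained in the line spanned by the first standard basis vector of $k^{n+2}$, with equality exactly when $A\neq I_{n+2}$, and $|G|=d-m\geq p\geq 2$ guarantees such an $A$ exists in $G$.

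It remains to prove $l\mid p^{u}-1$, which is the one step that is not bookkeeping (and is trivial when $l=1$). I would identify $G_p$ with the additive subgroup $W\subseteq(k^{n+1},+)$ of its upper-right blocks, a finite $\mathbb{F}_p$-subspace with $|W|=p^{u}$. A direct computation gives $D\left(\begin{smallmatrix}1&\overline{b}\\0&I_{n+1}\end{smallmatrix}\right)D^{-1}=\left(\begin{smallmatrix}1&e_l\overline{b}\\0&I_{n+1}\end{smallmatrix}\right)$, so normality of $G_p$ in $G$ means $e_l W=W$. Hence $W$ is stable under the subring $\mathbb{F}_p[e_l]\subseteq k$, which is the field $\mathbb{F}_p(e_l)$ of order $p^{r}$, where $r$ is the multiplicative order of $p$ modulo $l$ (the $l$-th cyclotomic polynomial splits over $\mathbb{F}_p$ into irreducible factors of degree $r$ since $\gcd(p,l)=1$). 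Thus $W$ is a vector space over $\mathbb{F}_{p^{r}}$, so $p^{u}=|W|$ is a power of $p^{r}$, i.e.\ $r\mid u$, and therefore $p^{u}\equiv(p^{r})^{u/r}\equiv 1\pmod{l}$, that is $l\mid p^{u}-1$. I expect this final step to be the main obstacle: recognizing that $D$-invariance of the Sylow $p$-subgroup promotes $W$ to a module over the cyclotomic field $\mathbb{F}_p(e_l)$ and thereby forces the congruence $l\mid p^{u}-1$; everything else is an unpacking of the normal forms from Lemmas~\ref{2.1} and~\ref{2.2}.
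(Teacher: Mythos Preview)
Your proof is correct and follows the same overall architecture as the paper's: lift $G_{\pi_P}$ into $\mathrm{UT}(*,I_{n+1})$ via Lemma~\ref{2.2}(1), use the $(1,1)$-entry homomorphism $\varphi$ to identify the Sylow $p$-subgroup as $\ker\varphi$, and conjugate by an element of $\mathrm{UT}(*,I_{n+1})$ to put a generator of the prime-to-$p$ part into diagonal form. The one substantive difference is the argument for $l\mid p^u-1$. The paper counts orbits: conjugation by the order-$l$ element $g$ acts on $\ker\varphi\setminus\{I_{n+2}\}$ with no fixed points (and, implicitly, with trivial stabilizers for every nontrivial power $g^i$), so each orbit has size $l$ and hence $l\mid p^u-1$. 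You instead observe that conjugation by $D$ is scalar multiplication by $e_l$ on $W$, so $W$ becomes a vector space over $\mathbb{F}_p[e_l]=\mathbb{F}_{p^r}$ (with $r$ the multiplicative order of $p$ modulo $l$), forcing $r\mid u$ and hence $p^u\equiv 1\pmod l$. Your argument is more structural and self-contained; the paper's is more elementary but leaves the reader to check that every nontrivial $g^i$, not just $g$ itself, acts without fixed points. A minor stylistic difference: the paper verifies the splitting $g^l=I_{n+2}$ before conjugating, via the identity $\sum_{i=0}^{l-1}e_l^i=0$, whereas you conjugate first so that $D^l=I_{n+2}$ is immediate from the diagonal form.
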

\begin{proof}
Since $P$ is an extendable Galois point, $G_{\pi_P}\subset \mathrm{PGL}(n+2,k)$.
By replacing the coordinate system if necessary, we may assume that $P = [1 : 0 : \cdots : 0]$.
By Lemma \ref{2.2} (1), there exists a group $G\subset \mr{UT}(*,I_{n+1})$ such that $pr\mid_{G}\co G\to G_{\pi_P}$ is an isomorphism.
For $h\in G$, we write
\[
h=\begin{pmatrix}
h_{11} & h_{12} & \dots  & h_{1n+2} \\
0 & 1 & \dots  & 0\\
\vdots & \vdots & \ddots & \vdots \\
0 & 0& \dots  & 1
\end{pmatrix}
\]
where \(h_{11},\ldots,h_{1n+2}\in k\).
Then
\[
\varphi\co G\ni h \mapsto h_{11}\in k^{\times}
\]
is a group homomorphism such that for $g\in G$, $g\in \mr{ker}(\varphi)$ if and only if $g_{11}=1$.
Let $G_p$ be a Sylow $p$-subgroup of $G$.
We take an element $h\in G_p$.
Then $h^{p^e}=I_{n+2}$ for some $e\geq 1$.
By Lemma \ref{4.0} (1), $h^p=I_{n+2}$.
By Lemma \ref{4.1} (2), $h_{11}=1$.
Thus, $\mr{ker}(\varphi)=G_p$.
For $g,h\in \mr{ker}(\varphi)$,
\[
gh=\begin{pmatrix} 
1 & g_{12}+h_{12} & \dots  & g_{1n+2}+h_{1n+2} \\
0 & 1 & \dots  & 0\\
\vdots & \vdots & \ddots & \vdots \\
0 & 0& \dots  & 1
\end{pmatrix}=hg.
\]
Thus, $\mr{ker}(\varphi)\cong (\mathbb Z/p\mathbb Z)^{\oplus u}$.
Since $k$ is a field of characteristic $p>0$, 
$\mr{im}(\varphi)$ is a cyclic group of order $l'$ where $\gcd(p,l')=1$.
Since $|G'|=p^ul$ and $|\mr{ker}(\varphi)|=p^u$,
we get that $l=l'$.
Since $\mr{im}(\varphi)$ is a cyclic group of order $l$ and $\gcd(p,l)=1$,
there exists an element $g\in G'$  such that 
\[
g=\begin{pmatrix}
e_l & g_{12} & \dots  & g_{1n+2} \\
0 & 1 & \dots  & 0\\
\vdots & \vdots & \ddots & \vdots \\
0 & 0& \dots  & 1
\end{pmatrix}
\]
where $e_l$ is a primitive $l$-th root of unity.
Since 
\begin{equation*}
\begin{split}
g^l&=\begin{pmatrix}
	1 & g_{12}(\sum_{i=0}^{l-1}e_l^i) & \dots  & g_{1n+2}(\sum_{i=0}^{l-1}e_l^i)\\
	0 & 1 & \dots  & 0\\
	\vdots & \vdots & \ddots & \vdots \\
	0 & 0& \dots  & 1
\end{pmatrix}\\
&=I_{n+2},
\end{split}
\end{equation*}
we get that 
\[
\{I_{n+2}\}\longrightarrow \ker(\varphi)\longrightarrow G\longrightarrow \mathrm{im}(\varphi)\longrightarrow \{1\}
\]
is split, and hence
\[
G\cong \ker(\varphi)\rtimes \mathrm{im}(\varphi).
\]
Consequently, we obtain
\[
G_{\pi_P}\cong (\mathbb{Z}/p\mathbb{Z})^{\oplus u}\rtimes \mathbb{Z}/l\mathbb{Z}.
\]
Since for $h\in \ker(\varphi)$,
\[
gh=\begin{pmatrix} 
e_l & e_lh_{12} +g_{12}& \dots  & e_lh_{1n+2}+g_{1n+2}\\
0 & 1 & \dots  & 0\\
\vdots & \vdots & \ddots & \vdots \\
0 & 0& \dots  & 1
\end{pmatrix}
\]
and
\[
hg=\begin{pmatrix} 
e_l & h_{12} +g_{12}& \dots  &h_{1n+2}+g_{1n+2}\\
0 & 1 & \dots  & 0\\
\vdots & \vdots & \ddots & \vdots \\
0 & 0& \dots  & 1
\end{pmatrix},
\] we have $gh\neq hg$, and hence $ghg^{-1}\neq h$.
It follows that the number of conjugacy classes in $\ker(\varphi)$ 
under the action of the cyclic group generated by $g$ is 
\[
1 + \frac{p^u - 1}{l}.
\]
Thus, $p^u - 1$ is divisible by $l$.
Moreover,
we set 
\[
B:=
\begin{pmatrix}
1 & \frac{g_{12}}{g_{11}-1} & \cdots & \frac{g_{1n+2}}{g_{11}-1} \\
0 & 1 & \cdots & 0\\
\vdots & \vdots & \ddots &\vdots\\
0 & 0 &  \cdots & 1
\end{pmatrix}.
\]
Then
\[BgB^{-1}=
\begin{pmatrix}
e_l & 0\\
0 & I_{n+1}
\end{pmatrix}\]
and 
$BGB^{-1}\subset \mr{UT}(*,I_{n+1})$.
We set $\beta:=[B]\in\mathrm{PGL}(n+2,k)$.
Then $pr\mid_{BGB^{-1}}\colon BGB^{-1}\to \beta G_{\pi_P}\beta^{-1}$ is an isomorphism, and $BGB^{-1}$ is generated by a Sylow $p$-subgroup of $BGB^{-1}$ and  
$\begin{pmatrix}
e_l & 0\\
0 & I_{n+1}
\end{pmatrix}$.
Since $BGB^{-1}\subset \mr{UT}(*,I_{n+1})$, it is straightforward to verify that \( BGB^{-1} \) satisfies conditions~$(i)$ and~$(ii)$ of Theorem \ref{4.1} $(1)$.
\end{proof}
From this point, we begin the preparation for the proof of the only if part of Theorem~\ref{1.2}~(2).
\begin{lem}\label{4.2}
Let $A\in \mathrm{GL}(m,k)$ be a matrix such that $A^p=I_m$. 
Then all eigenvalues of $A$ are $1$.
\end{lem}
\begin{proof}
Let $\lambda\in k$ be an eigenvalue of $A$.
Since $A^p=I_m$, we have $\lambda^p=1$.
Since \( \rmc(k)=p \), \( \lambda =1 \).
\end{proof}
\begin{lem}\label{4.3}
Let $A,B,C\in \mathrm{GL}(m,k)$ be matrices such that $A^p=B^p=I_m$ and $[CA]=[BC]$ in $\mathrm{PGL}(m,k)$.
Then $CA=BC$.
\end{lem}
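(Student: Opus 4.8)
The plan is to reduce the statement to a comparison of eigenvalues, using Lemma~\ref{4.2}. Since $[CA] = [BC]$ in $\mathrm{PGL}(m,k)$, there exists a scalar $t \in k^{\times}$ with $CA = tBC$, and it suffices to show $t = 1$, for then $CA = BC$ as claimed.

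First I would rewrite the relation $CA = tBC$ as $CAC^{-1} = tB$. The matrix $CAC^{-1}$ is conjugate to $A$ in $\mathrm{GL}(m,k)$, so it has the same characteristic polynomial, and in particular the same eigenvalues, as $A$. Since $A^p = I_m$, Lemma~\ref{4.2} applies to $A$, so every eigenvalue of $A$, and hence of $CAC^{-1}$, equals $1$. On the other hand, since $B^p = I_m$, Lemma~\ref{4.2} applied to $B$ shows that every eigenvalue of $B$ equals $1$; consequently every eigenvalue of $tB$ equals $t$, because scaling a matrix by $t$ scales each of its eigenvalues by $t$.

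Putting these together, the matrix $CAC^{-1} = tB$ has all of its eigenvalues equal to $1$ and, simultaneously, all of its eigenvalues equal to $t$; since $m \geq 1$ it has at least one eigenvalue, so $t = 1$ and therefore $CA = BC$. I do not expect any real obstacle here once Lemma~\ref{4.2} is available; the only point requiring a moment's care is the elementary observation that passing from $B$ to $tB$ multiplies each eigenvalue by $t$ rather than merely permuting the eigenvalue set. A determinant-based alternative — noting that $\det A = \det B = 1$ since their $p$-th powers are $I_m$ and $\rmc(k) = p$, whence $\det(CA) = \det(BC) = \det C$ and $t^m = 1$ — only yields $t^m = 1$, which is strictly weaker, so the eigenvalue argument is the preferable route.
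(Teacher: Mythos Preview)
Your proof is correct and follows essentially the same route as the paper: write $CAC^{-1}=tB$, apply Lemma~\ref{4.2} to see all eigenvalues of $A$ and $B$ equal $1$, then compare eigenvalues of the conjugate $CAC^{-1}$ with those of $tB$ to force $t=1$. Your additional remark on the determinant approach (yielding only $t^m=1$) is a nice observation but is not present in the paper.
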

\begin{proof}
Since $[CA]=[BC]$, there exists an element $\lambda\in k^{\times}$ such that 
\[
CAC^{-1}=\lambda B.
\]
By Lemma \ref{4.2}, all eigenvalues of $A$ and $B$ are 1.
Then all eigenvalues of $\lambda B$ are $\lambda$.
Since the matrices $A$ and $CAC^{-1}$ have the same characteristic polynomial, 
all eigenvalues of $CAC^{-1}$ are 1.
Since $CAC^{-1}=\lambda B$, we get that $\lambda=1$.
Therefore, $CA=BC$.
\end{proof}
\begin{lem}\label{4.4}
Let $\alpha\in {\rm PGL}(m,k)$ be an element such that $\alpha^p=e$. 
Then there exists a matrix $A\in\mathrm{GL}(m,k)$ such that $\alpha=[A]$ and $A^p=I_m$.
\end{lem}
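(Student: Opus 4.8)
The plan is to take an arbitrary lift and then correct it by a scalar. First choose any matrix $B\in\mathrm{GL}(m,k)$ with $\alpha=[B]$. Since $\alpha^p=e$, we have $[B^p]=[B]^p=\alpha^p=e$, so $B^p$ lies in the scalar subgroup of $\mathrm{GL}(m,k)$; write $B^p=cI_m$ with $c\in k^{\times}$. The task is then to rescale $B$ so as to annihilate this scalar $c$.

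Because $k$ is algebraically closed, there exists $c'\in k^{\times}$ with $(c')^p=c$. Set $A:=(c')^{-1}B$. Since scalar matrices are central in $\mathrm{GL}(m,k)$, we get $A^p=(c')^{-p}B^p=c^{-1}(cI_m)=I_m$, while $[A]=[(c')^{-1}B]=[B]=\alpha$. This is exactly the matrix asserted to exist.

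The only point beyond formal manipulation is the extraction of the $p$-th root $c'$ of $c$, which is available here because $k$ is algebraically closed (over $\mathbb{F}_p$ the Frobenius $x\mapsto x^p$ is moreover bijective, but surjectivity is all that is needed). Consequently there is no genuine obstacle: the scalar ambiguity of a lift of $\alpha$ is precisely undone by taking a $p$-th root of the scalar $c$, and the lemma follows from this short computation. I would place the argument immediately after Lemma~\ref{4.3} so that it can be combined with Lemmas~\ref{4.2} and~\ref{4.3} in the proof of the only if part of Theorem~\ref{1.2}~(2).
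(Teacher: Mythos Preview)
Your proof is correct and is essentially identical to the paper's own argument: lift $\alpha$ to any matrix, observe its $p$-th power is a scalar, extract a $p$-th root of that scalar (using that $k$ is algebraically closed), and rescale. The only differences are notational.
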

\begin{proof}
Let $C\in {\rm GL}(m,k)$ be a matrix such that $\alpha=[C]$. Since $\alpha^p=e$, there exists a non-zero element $\lambda\in k$ such that $C^p=\lambda I_m$.
Let $\tau\in k$ be an element such that $\tau^p=\lambda$.
We set $A:=\tau^{-1}C\in\mathrm{GL}(m,k)$.
Then $\alpha=[A]$ and $A^p=I_m$
\end{proof}
An \emph{\tb{unit upper triangular matrix}} is an upper triangular matrix whose diagonal entries are all $1$. Let \( \mathrm{UUT}(m,k) \subset \mathrm{GL}(m,k) \) denote the set of all unit upper triangular matrices.
Theorem~\ref{1.2} (1) is obtained from Proposition~\ref{4.5}.
\begin{pro}\label{4.5}
Let $G\subset \mr{PGL}(m,k)$ be a finite group.
\begin{enumerate}
\item[$(1)$]If $G\cong (\mathbb{Z}/p\mathbb{Z})^{\oplus u}$, then $G$ is liftable. In particular, there exists a subgroup $G_0\subset \mathrm{UUT}(m,k)$ and an element $\alpha\in \mathrm{PGL}(m,k)$ such that $pr\mid_{G_0}\co G_0\rightarrow \alpha G\alpha^{-1}$ is an isomorphism. 
\item[$(2)$]If $G\cong (\mathbb{Z}/p\mathbb{Z})^{\oplus u}\rtimes\mathbb Z/l\mathbb Z$ where $\gcd(p,l)=1$, then $G$ is liftable. 
\end{enumerate}
\end{pro}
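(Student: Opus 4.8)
The plan is to prove (1) first and then bootstrap to (2). For (1), suppose $G \cong (\mathbb{Z}/p\mathbb{Z})^{\oplus u}$. By Lemma~\ref{4.4}, for each $\alpha \in G$ we may choose a preimage $A_\alpha \in \mathrm{GL}(m,k)$ with $A_\alpha^p = I_m$; by Lemma~\ref{4.2} every such $A_\alpha$ is unipotent, hence conjugate into $\mathrm{UUT}(m,k)$. The key point is that the preimages glue into an honest subgroup: given $\alpha, \beta \in G$, both $A_\alpha A_\beta$ and $A_{\alpha\beta}$ are preimages of $\alpha\beta$, so $A_\alpha A_\beta = \lambda A_{\alpha\beta}$ for some $\lambda \in k^\times$; comparing characteristic polynomials (both sides unipotent — here I would invoke Lemma~\ref{4.3} applied with suitable $A$, $B$, $C$, or redo its eigenvalue argument directly) forces $\lambda = 1$. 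Thus $\{A_\alpha\}_{\alpha \in G}$ is a subgroup $G_0 \subset \mathrm{GL}(m,k)$ mapping isomorphically onto $G$, so $G$ is liftable. Finally, $G_0$ is a finite $p$-group of unipotent matrices, hence (by the standard fact that a finite $p$-group in $\mathrm{GL}(m,k)$ over a field of characteristic $p$ is conjugate into the unipotent upper-triangular subgroup — a Lie–Kolchin / fixed-vector-flag argument) there is $C \in \mathrm{GL}(m,k)$ with $C G_0 C^{-1} \subset \mathrm{UUT}(m,k)$; setting $\alpha := [C]$ gives the refined statement.

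For (2), write $G \cong N \rtimes H$ with $N \cong (\mathbb{Z}/p\mathbb{Z})^{\oplus u}$ the normal $p$-part and $H \cong \mathbb{Z}/l\mathbb{Z}$ cyclic of order prime to $p$. By part (1), after conjugating by some $\alpha \in \mathrm{PGL}(m,k)$ we may assume $N$ lifts to $N_0 \subset \mathrm{UUT}(m,k)$. The task is to lift the complement $H$ compatibly. Pick a generator $\bar h \in H$ and a preimage $h \in \mathrm{GL}(m,k)$; since $|H| = l$ is prime to $p = \rmc(k)$, $h^l = \mu I_m$ for some $\mu \in k^\times$, and choosing an $l$-th root $\nu$ of $\mu$ (possible as $k$ is algebraically closed) and replacing $h$ by $\nu^{-1} h$ we arrange $h^l = I_m$. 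Now $h$ normalizes $N_0$ up to scalars: for $A \in N_0$, $h A h^{-1} = \lambda_A A'$ with $A' \in N_0$ (the lift of $\bar h [A] \bar h^{-1} \in N$) and $\lambda_A \in k^\times$; since $A$ and $A'$ are both unipotent, comparing characteristic polynomials of $h A h^{-1}$ and $\lambda_A A'$ forces $\lambda_A = 1$, so $h$ genuinely normalizes $N_0$. Therefore $G_0 := \langle N_0, h \rangle$ is a subgroup of $\mathrm{GL}(m,k)$, it surjects onto $\alpha G \alpha^{-1}$, and by order count $|G_0| \le |N_0| \cdot l = |G|$ the surjection is an isomorphism; conjugating back by $\alpha$ shows $G$ itself is liftable.

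I expect the main obstacle to be the lifting step for the complement in part~(2): verifying that the chosen preimage $h$ of a generator of $H$ can be normalized so that it both has order exactly $l$ \emph{and} normalizes the already-chosen lift $N_0$ on the nose rather than merely projectively. Both adjustments are available because $\gcd(l,p)=1$ lets us extract $l$-th roots and because unipotency of the elements of $N_0$ rigidifies the scalar ambiguity (via the characteristic-polynomial comparison of Lemmas~\ref{4.2} and~\ref{4.3}); but one must be careful that the two normalizations — the root-extraction fixing $h^l = I_m$ and the conjugation behaviour on $N_0$ — do not conflict, which is why I would first fix $N_0$ via part~(1) and only afterwards adjust $h$. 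The rest is bookkeeping with semidirect products and the order count.
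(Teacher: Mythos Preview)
Your proposal is correct and follows essentially the same route as the paper: choose lifts with $A_\alpha^p = I_m$ via Lemma~\ref{4.4}, use the eigenvalue rigidity behind Lemma~\ref{4.3} to kill the scalar ambiguities so that the lifts form a genuine subgroup, triangularize, and then for~(2) lift a generator of the $\mathbb{Z}/l\mathbb{Z}$-complement to exact order~$l$ and verify via the same eigenvalue argument that it normalizes the lifted $p$-part on the nose. The one point to watch in part~(1) is that $A_\alpha A_\beta$ is not \emph{a priori} unipotent, so Lemma~\ref{4.3} does not apply in a single shot to give $A_\alpha A_\beta = A_{\alpha\beta}$; the paper (and implicitly your ``suitable $A,B,C$'') first takes $C=A_\alpha$, $A=B=A_\beta$ to extract commutativity from the abelianness of $G$, whence $(A_\alpha A_\beta)^p=I_m$ and the product formula then follows---after that, your argument and the paper's coincide.
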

\begin{proof}
We show Proposition \ref{4.5} $(1)$.
We assume that $G\cong (\mathbb{Z}/p\mathbb{Z})^{\oplus u}$.
By Lemma \ref{4.4}, for any $g\in G$, we can choose a representative matrix $A_{g}\in \textrm{GL}(m,k)$ such that $g=[A_{g}]$ and $A_g^p=I_m$. 
We set
\[
\widetilde{G}:=\{A_{g}\in\textrm{GL}(m,k)\mid g\in G\ \textrm{where}\ g=[A_g]\ \textrm{and}\ A_g^p=I_m\},
\]
and
\[
G_0:=\langle \widetilde{G}\rangle\subset \mathrm{GL}(m,k).
\]
By Lemma \ref{4.3}, $A_{g_{1}}A_{g_{2}}=A_{g_{2}}A_{g_{1}}$ for $g_1,g_2\in G$.
Therefore, $G_0$  is an abelian group.
Let
$\phi:=pr\mid_{G_0}\co G_{0}\ni A \mapsto[A]\in G$ be the homomorphism.
Then $\phi$ is surjective, and $\mr{ker}(\phi)=G_{0}\cap \{\lambda I_m\mid \lambda\in k^{\times}\}$.
Since $G\cong (\mathbb{Z}/p\mathbb{Z})^{\oplus u}$, and $A_g^p=I_{m}$ for $g\in G$, we obtain
\[
G_0\cong (\mathbb{Z}/p\mathbb{Z})^{\oplus t}\quad {\rm and}\quad \mr{ker}(\phi)\cong (\mathbb{Z}/p\mathbb{Z})^{\oplus s}
\] where $u\leq t$ and $s\leq t$.
Since $\rmc(k)=p$, we get that $\mr{ker}(\phi)=\{I_m\}$.
Therefore, $\phi$ is an isomorphism, and hence, $G$ is liftable.

Because a commuting family of matrices over an algebraically closed field is simultaneously upper-triangularizable, there exists a matrix $B\in \text{GL}(m,k)$ such that 
\[BG_{0}B^{-1}\subset \mr{UT}(m,k).\]
By Lemma \ref{4.2}, all eigenvalues of $A\in G_0$ are 1. Therefore, $BG_0B^{-1}\subset \text{UUT}(m,k)$.
Let $\beta:=[B]\in\text{PGL}(m,k)$.
Then $pr\mid_{BG_0B^{-1}}\co BG_0B^{-1}\to \beta G\beta^{-1}$ is an isomorphism.

We show Proposition \ref{4.5} $(2)$.
We assume that $G\cong (\mathbb{Z}/p\mathbb{Z})^{\oplus u}\rtimes\mathbb Z/l\mathbb Z$ where $\gcd(p,l)=1$.
Let $G_p$ be a Sylow $p$-subgroup of $G$.
By Proposition \ref{4.5} $(1)$, there exists a group $H\subset {\rm GL}(m,k)$ such that 
\[pr\mid_{H}\co H\to G_p\] is an isomorphism.
Let $A\in {\rm GL}(m,k)$ be a matrix such that $[A]\in G$ is an element of order $l$.
As in the proof of Lemma \ref{4.4}, we may assume \( A^l = I_m \) after multiplying by a suitable constant if necessary.
Let 
\[H_A:=\langle H,A\rangle \subset {\rm GL}(m,k)\]
be the group generated by $H$ and $A$.
Then the homomorphism $pr\mid_{H_A}\colon H_A\to G$ is surjective.
By Lemma \ref{4.3}, and $G_p$ is a normal subgroup of $G$, we get that $AHA^{-1}\subset H$.
Thus, $H$ is a normal subgroup of $H_A$.
Since $\gcd(p,l)=1$, we get that $H\cap \langle A\rangle=\{I_m\}$.
Thus, $|H_A|=p^ul$.
This implies that $pr\mid_{H_A}$ is isomorphic, and $G$ is liftable.
\end{proof}
In characteristic zero, the paper~[\ref{bio:z22}] investigates abelian groups 
acting on smooth projective hypersurfaces that are liftable to the general 
linear group. On the other hand, the paper~[\ref{bio:th25n}] provides a 
classification of abelian groups acting on smooth hypersurfaces in 
$\mathbb{P}^3$ that are not liftable.
\begin{thm}\label{4.7}
Let $\mathcal{X}\subset \mathbb{P}^{n+1}$ be an irreducible hypersurface of degree $g$ such that $P=[1:0:\cdots:0]$ is an extendable wild Galois point for $\mathcal{X}$, 
let $m$ be the multiplicity of $\mathcal{X}$ at $P$, and
let $G_{\pi_P}$ be the Galois group of the projection $\pi\co \mac X\da\mathbb P^n$.
We assume that there exists a group  $G\subset \mathrm{UT}(*,n+1)$ such that the homomorphism $pr\mid_G\co G\to G_{\pi_P}$ is an isomorphism, and
$G = \Biggl\langle 
\begin{pmatrix}
	e_{d-m} & 0 \\
	0 & I_{n+1}
\end{pmatrix},\ G_p 
\Biggr\rangle$
where $G_p$ is a Sylow $p$-subgroup of $G$. 
Let $F(X_0,\ldots,X_{n+1})$ be the defining equation of $\mathcal{X}$.
Then there exist $A(X_1,\ldots, X_{n+1}) \in k[X_1,\ldots,X_{n+1}]_{m}$ and 
$B(X_1,\ldots, X_{n+1}) \in k[X_1,\ldots,X_{n+1}]_{d}$ such that
	\[
	F(X_0,\ldots,X_{n+1})
	= A(X_1,\ldots, X_{n+1})\left(\prod_{g\in G} g^*X_0\right) + B(X_1,\ldots, X_{n+1}).
	\]
\end{thm}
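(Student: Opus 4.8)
The strategy is to compare the defining form $F$ with the orbit product
\[
N:=\prod_{g\in G}g^{*}X_{0},
\]
both regarded as polynomials in $X_{0}$ over $R_{0}:=k[X_{1},\dots,X_{n+1}]$, and to show that after subtracting a suitable element of $R_{0}$ times $N$ from $F$ the variable $X_{0}$ disappears entirely.

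First I would fix notation and check that $F$ and $N$ are both $G$-invariant. Since $\mathcal{X}$ has multiplicity $m$ at $P=[1:0:\cdots:0]$, we may write $F=F_{m}X_{0}^{\,d-m}+(\text{terms of lower }X_{0}\text{-degree})$ with $F_{m}\in k[X_{1},\dots,X_{n+1}]_{m}$ nonzero. For $g\in G\subset\mathrm{UT}(*,I_{n+1})$, write its first row as $(a_{g},b^{(g)}_{1},\dots,b^{(g)}_{n+1})$ and set $\ell_{g}:=\sum_{i=1}^{n+1}b^{(g)}_{i}X_{i}$, so that $g^{*}X_{0}=a_{g}X_{0}+\ell_{g}$ while $g^{*}X_{i}=X_{i}$ for $i\geq1$. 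Then $N$ is a form of degree $|G|=d-m$ with $\deg_{X_{0}}N=d-m$ and leading $X_{0}$-coefficient $\prod_{g}a_{g}\in k^{\times}$, and reindexing the product by $g\mapsto hg$ gives $h^{*}N=N$ for all $h\in G$. On the other hand, since $P$ is extendable, $g^{*}F=t_{g}F$ for some $t_{g}\in k^{\times}$, and comparing coefficients of $X_{0}^{\,d-m}$ yields $t_{g}=a_{g}^{\,d-m}$. Finally, $g\mapsto a_{g}$ is a homomorphism $G\to k^{\times}$ whose kernel contains $G_{p}$ (any $h\in G_{p}$ has $h^{p}=I_{n+2}$, hence $a_{h}=1$ by Lemma~\ref{4.0}), so its image is the cyclic group generated by the $(1,1)$-entry $e_{d-m}$ of the generator $\begin{pmatrix}e_{d-m}&0\\0&I_{n+1}\end{pmatrix}$ of $G$, whose order divides $d-m$; hence $t_{g}=1$ and $g^{*}F=F$ for every $g\in G$.

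Next I would set $A:=\big(\prod_{g}a_{g}\big)^{-1}F_{m}\in k[X_{1},\dots,X_{n+1}]_{m}$ and $R:=F-A\,N$. Because $A$ involves no $X_{0}$, the element $R$ is a $G$-invariant form with $\deg_{X_{0}}R\leq d-m-1$, and it suffices to show $\deg_{X_{0}}R=0$: then $B:=R$ lies in $k[X_{1},\dots,X_{n+1}]_{d}$ and $F=A\,N+B$ as desired. This isolates the heart of the proof, namely the claim that \emph{every $G$-invariant element of $R_{0}[X_{0}]$ of $X_{0}$-degree strictly less than $|G|$ has $X_{0}$-degree $0$}. To prove it, observe first that the generator $\begin{pmatrix}e_{d-m}&0\\0&I_{n+1}\end{pmatrix}$ acts by $X_{0}\mapsto e_{d-m}X_{0}$, so $G$-invariance forces $l$ to divide every exponent of $X_{0}$ occurring in $R$; in particular $l\mid\deg_{X_{0}}R$. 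Next, $G_{p}$ acts on $R_{0}[X_{0}]$ by the translations $X_{0}\mapsto X_{0}+\ell_{h}$, indexed by the $\mathbb{F}_{p}$-subspace $T:=\{\ell_{h}:h\in G_{p}\}$ of $k(X_{1},\dots,X_{n+1})$, which has dimension $u\geq1$ since $P$ is wild; invariance of $R$ under all of these translations forces $R$ to be a polynomial in the additive polynomial $\phi_{T}(X_{0}):=\prod_{c\in T}(X_{0}-c)$ of degree $p^{u}$, whence $p^{u}\mid\deg_{X_{0}}R$. Since $\gcd(p,l)=1$, the number $d-m=p^{u}l$ divides $\deg_{X_{0}}R$, and as $0\leq\deg_{X_{0}}R<d-m$ we conclude $\deg_{X_{0}}R=0$.

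The one genuinely technical ingredient, which I would record as a lemma, is the assertion used above: over a field $K$ of characteristic $p$, a polynomial invariant under translation by every element of a finite $\mathbb{F}_{p}$-subspace $T\subset K$ is a polynomial in $\phi_{T}(X):=\prod_{c\in T}(X-c)$. I would prove it by induction on $\dim T$: fixing a hyperplane $T'\subset T$ and $c_{0}\in T\setminus T'$, the inductive hypothesis expresses the polynomial as $\rho(\phi_{T'}(X))$; additivity of $\phi_{T'}$ turns the remaining invariance under $X\mapsto X+c_{0}$ into invariance of $\rho$ under translation by $\phi_{T'}(c_{0})\neq0$; the one-dimensional case — a direct comparison of coefficients — then gives $\rho\in K[\psi]$ for the degree-$p$ additive polynomial $\psi$ associated with the line $\mathbb{F}_{p}\,\phi_{T'}(c_{0})$; and the identity $\psi\circ\phi_{T'}=\phi_{T}$ finishes the induction. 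Apart from this lemma the proof consists of routine coefficient comparisons.
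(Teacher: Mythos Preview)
Your proof is correct and follows a genuinely different line from the paper's. After the common first step (showing $g^{*}F=F$ for every $g\in G$ by comparing $X_{0}^{\,d-m}$-coefficients), the paper passes to the algebraic closure $K$ of $k(X_{1},\dots,X_{n+1})$, factors $F-F_{d}=F_{m}\prod_{j=1}^{d-m}(X_{0}-H_{j})$ with $H_{j}\in K$, notes that $H_{1}=0$, and then argues from $G$-invariance together with a count of linear factors that the multiset $\{X_{0}-H_{j}\}$ coincides with $\{g^{*}X_{0}:g\in G\}$. You avoid the algebraic closure entirely: having matched leading $X_{0}$-coefficients to form $R=F-AN$ with $\deg_{X_{0}}R<d-m$, you reduce everything to the claim that a $G$-invariant element of $k[X_{1},\dots,X_{n+1}][X_{0}]$ of $X_{0}$-degree below $|G|$ is free of $X_{0}$, which you extract from two divisibility constraints---the diagonal generator forces $l\mid\deg_{X_{0}}R$, while your additive-polynomial lemma for the $G_{p}$-translations forces $p^{u}\mid\deg_{X_{0}}R$. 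Your approach is more self-contained (the lemma on $\mathbb{F}_{p}$-linearised polynomials is standard and your inductive sketch is sound) and it sidesteps a factor-matching step in the paper that is delicate when the $(1,1)$-entry $a_{g}\neq 1$, since then $g^{*}X_{0}$ is only \emph{associate} to some $X_{0}-H_{j}$ rather than equal to it; in exchange, the paper's route is shorter once that step is granted and makes the explicit identifications $A=(\prod_{g}a_{g})^{-1}F_{m}$ and $B=F_{d}$ immediate---though of course your construction yields exactly the same formulas.
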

\begin{proof}
Since the multiplicity of $\mathcal X$ at \(P\) is \(m\), and $\mac{X}$ is irreducible,
the defining equation of \(\mathcal{X}\) is written as
\[
F(X_0,\ldots, X_{n+1})
=\sum_{i=0}^{d-m}F_{d-i}(X_1,\ldots, X_{n+1})X_0^i
\]
where \(F_{d-i}(X_1,\ldots, X_{n+1})\in k[X_1,\ldots, X_{n+1}]_{d-i}\) for $i=0,\ldots,d-m$, and for $u=d-m$ and $0$ we have \(F_{d-u}(\X)\neq 0\).
Since $G_{\pi_P}\subset \mr{Aut}(\mac X)$, we have that
for $g\in G$ there exists an element $t_g\in k^{\times}$ such that $g^*F(X_0,\ldots, X_{n+1})=t_gF(X_0,\ldots, X_{n+1})$.
Since $d-m=p^{u}l$, we get that if $g \in \left\langle 
\begin{pmatrix}
	e_{d-m} & 0 \\
	0 & I_{n+1}
\end{pmatrix}
\right\rangle$,
then 
\[
g^*(F_m(\X) X_0^{d-m}) = F_m(X_1,\ldots,X_{n+1}) X_0^{d-m}.
\]
This implies $t_g=1$.
By Lemma~\ref{4.0} (2), if $g \in G_p$, then 
\[
g^*X_0 = X_0 + \sum_{i=1}^{n+1} a_i X_i, \qquad a_i \in k.
\]
In particular, since $g \in \mathrm{UT}(*,I_{n+1})$, it follows that 
\[
g^*X_i = X_i \quad \text{for } i=1,\ldots,n+1.
\]
Hence, we conclude that the coefficient of $X_0^{d-m}$ in $g^*F(X_0,\ldots,X_{n+1})$ 
remains equal to $F_m(\X)$.
Therefore, $t_g=1$.
Since $G = \Biggl\langle 
\begin{pmatrix}
	e_{d-m} & 0 \\
	0 & I_{n+1}
\end{pmatrix},\ G_p 
\Biggr\rangle$, we get that $t_g=1$ for any $g\in G$.
Let \(K\) be an algebraic closure of the function field \(k(X_1,\ldots,X_{n+1})\).
Then we may factor
\[
\sum_{i=1}^{d-m}F_{d-i}(X_1,\ldots, X_{n+1})X_0^{i}
=F_m(X_1,\ldots, X_{n+1})\prod_{j=1}^{d-m}(X_0-H_j)
\]
with \(H_j\in K\).
Since the left-hand side is divisible by \(X_0\), there exists some \(j\) with \(H_j=0\).
Without loss of generality, we may assume \(H_1=0\).
Hence we obtain
\[
F(X_0,\ldots,X_{n+1})
=F_m(X_1,\ldots,X_{n+1})\left(X_0\prod_{j=2}^{d-m}(X_0-H_j)\right)
+F_d(X_1,\ldots,X_{n+1}).
\]
Since \(G\) acts trivially on $K$ , for any \(g\in G\)
\begin{dmath*}
g^*\!\Bigl(F_m(X_1,\ldots,X_{n+1})\,X_0\prod_{j=2}^{d-m}(X_0-H_j)\Bigr)=F_m(X_1,\ldots,X_{n+1})\left(g^*X_0\right)\prod_{j=2}^{d-m}(g^*X_0-H_j)=F_m(X_1,\ldots,X_{n+1})\,X_0\prod_{j=2}^{d-m}(X_0-H_j).
\end{dmath*}
Then
\[g^*X_0=X_0-H_j\ \text{for some }j.\]
Since $G\subset \mr{UT}(*,I_{n+1})$, $g^*X_0=X_0$ if and only if $g=I_{n+2}$.
Since \(|G|=d-m\), it follows that
\[
X_0\prod_{j=2}^{d-m}(X_0-H_j)
=\prod_{g\in G}g^*X_0.
\]
Therefore, 
\[
F(X_0,\ldots,X_{n+1})
=F_m(X_1,\ldots,X_{n+1})\left(\prod_{g\in G}g^*X_0\right)+F_d(X_1,\ldots,X_{n+1}).
\]
\end{proof}
We show the only if part of Theorem~\ref{1.2} (2) in Theorem~\ref{3.8}.
\begin{thm}\label{3.8}
Let \( G \subset \mathrm{PGL}(n+2,k) \) be a group isomorphic to \( (\mathbb{Z}/p\mathbb{Z})^{\oplus u} \rtimes \mathbb{Z}/l\mathbb{Z} \) where \( \gcd(p, l) = 1 \) if $l\geq2$.
We assume that here exists a group \( H \subset \mathrm{GL}(n+2,k) \) such that \( pr\mid_{H} \colon H \to G \) is an isomorphism, and \( H \) satisfies the following two conditions:
\begin{itemize}
\item[$(i)$] \( \dim \mathrm{im}(A - I_{n+2}) = 1 \) for some \( A \in H \);
\item[$(ii)$] \( \mathrm{im}(B - I_{n+2}) = \mathrm{im}(C - I_{n+2}) \) for all \( B, C \in H \setminus \{I_{n+2}\} \).
\end{itemize}
Then there exists an irreducible hypersurface \( \mac X \subset \mathbb{P}^{n+1} \) with an extendable wild Galois point $P\in\mb P^{n+1}$ such that \( G \) is the Galois group of the projection $\pi_P \colon \mac X \dashrightarrow \mathbb{P}^n $.
\end{thm}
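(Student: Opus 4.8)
The plan is to realize $G$ directly as the group $G_{\pi_P}$ for $P=[1:0:\cdots:0]$, using a hypersurface of the shape occurring in Theorem~\ref{4.7}; note that $u\geq 1$, since $P$ is to be wild. First I would normalize $H$. As $|H|=|G|=p^ul\geq 2$, choose $C_0\in H\setminus\{I_{n+2}\}$ and set $V:=\mr{im}(C_0-I_{n+2})$; by $(ii)$ we have $V=\mr{im}(B-I_{n+2})$ for every $B\in H\setminus\{I_{n+2}\}$, and since the $A$ of $(i)$ is $\neq I_{n+2}$, also $\dim V=1$. Replacing $H$ by $QHQ^{-1}$ for a suitable $Q\in\mr{GL}(n+2,k)$ (this conjugates $G$ as well; at the end one undoes it by the corresponding element of $\mr{PGL}(n+2,k)$, which only moves the final hypersurface by a coordinate change), I may assume $V=\langle e_1\rangle$ with $e_1={}^{t}(1,0,\ldots,0)$. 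Then for each $C\in H$ every column of $C-I_{n+2}$ lies in $\langle e_1\rangle$, so $C-I_{n+2}$ is supported on its first row; hence $C\in\mr{UT}(*,I_{n+1})$. Thus $H\subset\mr{UT}(*,I_{n+1})$, and in particular $C^*X_i=X_i$ for all $C\in H$ and $i=1,\ldots,n+1$.

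Next I would build the hypersurface. Put $m:=1$ and $d:=m+|H|=1+p^ul\geq 3$ (recall $u\geq 1$, so $p^ul\geq p\geq 2$), and set
\[
\Phi:=\prod_{h\in H}h^*X_0\in k[X_0,\ldots,X_{n+1}]_{p^ul},\qquad F:=X_1\,\Phi+X_{n+1}^{\,d},\qquad \mac X:=\{F=0\}\subset\mb P^{n+1}.
\]
Three features of $\Phi$ are needed, all immediate: since the action $h\mapsto h^*$ permutes the set of linear forms $\{g^*X_0:g\in H\}$, the form $\Phi$ is fixed by every $h^*$, $h\in H$; each factor $h^*X_0$ has $X_0$-coefficient $h_{11}\neq 0$, so $\Phi$ has $X_0$-degree $p^ul$ with leading coefficient $\prod_{h\in H}h_{11}\neq 0$; and $X_1\nmid X_{n+1}^{\,d}$ because $n\geq 1$.

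Now for the verifications. \emph{Irreducibility:} setting $X_1=0$ gives $F|_{X_1=0}=X_{n+1}^{\,d}$, so if $F=F_1F_2$ with $F_1,F_2$ non-constant then $F_1|_{X_1=0}\cdot F_2|_{X_1=0}=X_{n+1}^{\,d}$, forcing $F_i|_{X_1=0}=c_iX_{n+1}^{\,\delta_i}$ with $c_i\in k^\times$ and $\delta_i:=\deg F_i\in\{1,\ldots,d-1\}$ (neither restriction vanishes, else $X_1\mid F$ and $F|_{X_1=0}=0$). Writing $F_i=c_iX_{n+1}^{\,\delta_i}+X_1G_i$ and substituting into $F=F_1F_2=X_1\Phi+X_{n+1}^{\,d}$, one obtains $c_1c_2=1$ and then
\[
\Phi=c_1X_{n+1}^{\,\delta_1}G_2+c_2X_{n+1}^{\,\delta_2}G_1+X_1G_1G_2;
\]
putting $X_1=0$ here, the right side has $X_0$-degree at most $\max(\delta_1,\delta_2)-1\leq d-2$, while $\Phi|_{X_1=0}$ has $X_0$-degree $p^ul=d-1$ with leading coefficient $\prod_h h_{11}\neq 0$ — a contradiction. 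So $\mac X$ is an irreducible hypersurface of degree $d$. \emph{Multiplicity:} $\deg_{X_0}F=p^ul$ and the coefficient of $X_0^{p^ul}$ in $F$ is $X_1\prod_h h_{11}\neq 0$, so the multiplicity of $\mac X$ at $P$ equals $d-p^ul=m=1$; hence $[k(\mac X):k(\mb P^n)]=d-m=p^ul$ and therefore $|G_{\pi_P}|\leq p^ul$. \emph{The Galois group:} for $h\in H$, $h^*F=(h^*X_1)(h^*\Phi)+(h^*X_{n+1})^d=X_1\Phi+X_{n+1}^{\,d}=F$, so $h$ induces an automorphism of $\mac X$, which lies in $G_{\pi_P}$ because $h\in\mr{UT}(*,I_{n+1})$; hence $G=pr(H)\subseteq G_{\pi_P}$. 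Combined with $|G_{\pi_P}|\leq p^ul=|G|$ this gives $G_{\pi_P}=G\subset\mr{PGL}(n+2,k)$ and $|G_{\pi_P}|=d-m$, so $P$ is an extendable Galois point for $\mac X$ with Galois group $G$, and it is wild since $p\mid d-m$. Undoing the conjugation of the first step transports $\mac X$ to a hypersurface realizing the originally prescribed subgroup $G$.

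I expect the conceptual heart — and the reason conditions $(i)$ and $(ii)$ are the correct ones — to be the normalizing step: $(ii)$ supplies a common line $V$ and so permits simultaneously conjugating all of $H$ into $\mr{UT}(*,I_{n+1})$ by sending one spanning vector of $V$ to $e_1$, while $(i)$ forces $V$ to be one-dimensional, i.e. to correspond to a single center $P\in\mb P^{n+1}$ (the Galois point) rather than to a projection from a higher-dimensional linear subspace. After that, the construction follows the pattern of Theorem~\ref{4.7}, and the only point requiring a little care is the irreducibility of $F$, which is reduced to a degree count in $X_0$ after restricting to $\{X_1=0\}$; the equality $G_{\pi_P}=G$ (and not something larger) is then automatic from the bound $|G_{\pi_P}|\leq[k(\mac X):k(\mb P^n)]=d-m=|G|$.
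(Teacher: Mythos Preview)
Your proof is correct and follows the same overall strategy as the paper: conjugate $H$ into $\mathrm{UT}(*,I_{n+1})$ using conditions $(i)$--$(ii)$, then build the hypersurface from $\Phi=\prod_{h\in H}h^{*}X_{0}$. The execution differs in two respects. First, your normalization is more direct: you send a spanning vector of the common line $V$ to $e_{1}$ in one stroke, whereas the paper first upper-triangularizes the Sylow $p$-subgroup $H_{p}$, argues that the common image is $\langle e_{1}\rangle$ after a basis change, and then further conjugates to diagonalize an order-$l$ generator. Second, and more substantially, your irreducibility argument is uniform in $n$: by fixing $m=1$ and taking $F=X_{1}\Phi+X_{n+1}^{\,d}$, the restriction to $\{X_{1}=0\}$ reduces irreducibility to an $X_{0}$-degree count. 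The paper instead constructs hypersurfaces of \emph{arbitrary} multiplicity $m\geq 0$ and pays for this with a case split: for $n\geq 2$ it takes both $A$ and $B$ irreducible, while for $n=1$ it needs a carefully chosen $B$ and separate subcases for $l\geq 2$ and $l=1$, exploiting the diagonalized order-$l$ element. Your specialization to $m=1$ suffices for Theorem~\ref{3.8} as stated and yields a noticeably cleaner argument; the paper's version is what one needs if one also wishes to prescribe the multiplicity at $P$.
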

\begin{proof}
Let $H_p$ be a Sylow $p$-subgroup of $H$.
Since $H_p$ is an abelian group,
by simultaneously upper- triangularized,
there exists a matrix $Q\in \text{GL}(n+2,k)$ such that $QH_pQ^{-1}\subset \mr{UT}(n+2,k)$.
Since $\rmc(k)=p$, $QH_pQ^{-1}\subset \mr{UUT}(n+2,k)$.
Since \( Q \) is an invertible matrix, the conjugate \( Q H Q^{-1} \) satisfies conditions \((i)\) and \((ii)\). Hence, without loss of generality, we may assume from the outset that
$H_{p} \subset \mathrm{UUT}(n+2,k)$.

Let \( e_{1}, \dots, e_{n+2} \) be the standard basis of \( k^{n+2} \).  
Since \( H_{p} \subset \mathrm{UUT}(n+2,k) \), by conditions \((i)\) and \((ii)\), after renumbering the basis vectors if necessary, 
we may assume that
\[
\mathrm{im}(A - I_{n+2}) = \langle e_{1} \rangle_k
\]
for all  $A \in H \backslash\{ I_{n+2} \}$
where \( \langle e_{1} \rangle_k \) denotes the one-dimensional subspace generated by \( e_{1} \).
Hence every \( A\in H \) has the form
\[
A=\begin{pmatrix}
	a_{11} & a_{12} & \cdots & a_{1n+2}\\
	0 & 1 & \cdots & 0\\
	\vdots & \vdots & \ddots &\vdots\\
	0 & 0 &  \cdots & 1
\end{pmatrix}\in\mathrm{UT}(*,I_{n+1}).
\]
Thus, $H\subset \mr{UT}(*,I_{n+1})$.
By Lemma \ref{4.0}, an element \( A\in H \) lies in \( H_p \) if and only if its \((1,1)\)-entry is \( 1 \).
Since \( H\cong (\mathbb{Z}/p\mathbb{Z})^{\oplus u} \rtimes \mathbb{Z}/l\mathbb{Z} \), there exists a matrix $B\in H$ such that $B^l=I_{n+2}$ and $B_l^i\neq I_{n+2}$ for $1\leq i\leq l-1$. 
We denote the $(1,i)$-entry of the matrix $B$ by $b_{1i}$ for $i=1,\ldots, n+2$.
Note that $b_{11}$ is a primitive $l$-th root of unity, and $H=\langle B,H_p \rangle$.
We set $C:=
\begin{pmatrix}
1 & \frac{b_{12}}{b_{11}-1} & \cdots & \frac{b_{1n+2}}{b_{11}-1} \\
	0 & 1 & \cdots & 0\\
	\vdots & \vdots & \ddots &\vdots\\
	0 & 0 &  \cdots & 1
\end{pmatrix}\in \mathrm{UT}(*,I_{n+1})$.
Then $CBC^{-1}=\begin{pmatrix}
	b_{11} & 0\\
	0 & I_{n+1}
\end{pmatrix}$
 and 
 $CHC^{-1}\subset \mr{UT}(*,I_{n+1})$.
 We set $H':=CHC^{-1}$ and $D:=CBC^{-1}$.
Let $N:=p^ul$, and
\[F_{H'}(X_0,\ldots,X_{n+1}):=\prod_{h\in H'}h^*X_0\in k[X_0,\ldots,X_{n+1}]_N.\]
Since for $h\in H$, $h^*X_0=tX_0$ for some $t\in k^{\times}$ if and only if $h\in\langle D\rangle $,
we get that 
there exists a nonzero form $G(X_0,\ldots, X_{n+1})\in k[X_0,\ldots,X_{n+1}]_{N-l}$ of degree $N-l$ such that
\[F_{H'}(X_0,\ldots,X_{n+1})=G(X_0,\ldots, X_{n+1})X_0^l.\]
Moreover, the coefficient of $X_0^l$ in $F_{H'}(X_0,\ldots,X_{n+1})$ is a nonzero form.

We now prove that for $m \geq 0$ there exist forms $A(\X)$ of degree $m$ 
and $B(\X)$ of degree $N+m$ such that  
\[A(\X)F_{H'}(X_0,\ldots,X_{n+1})+B(\X)\] is irreducible. 
The proof is divided into two cases: first, the case $n \geq 2$, and then the case $n=1$.

We assume that $n\geq 2$.
We take irreducible forms $A(X_1,\ldots, X_{n+1})$ of degree $m$ and $B(X_1,\ldots,X_{n+1})$ of degree $N+m$.
We set
\[F(X_0,\ldots, X_{n+1}):=A(X_1,\ldots,X_{n+1})F_{H'}(X_0,\ldots,X_{n+1})+B(X_1,\ldots, X_{n+1}).\]
If $F(X_0,\ldots,X_{n+1})$ is reducible, then we may write
\[
F(X_0,\ldots,X_{n+1}) = Y_1(X_0,\ldots,X_{n+1}) \, Y_2(X_0,\ldots,X_{n+1})
\]
where $Y_1(X_0,\ldots,X_{n+1})$ and $Y_2(X_0,\ldots,X_{n+1})$ are forms of degree at least one. 
The constant term $B(X_1,\ldots,X_{n+1})$ of $F(X_0,\ldots,X_{n+1})$ with respect to $X_0$
is given by the product of the constant terms of 
$Y_1(X_0,\ldots,X_{n+1})$ and $Y_2(X_0,\ldots,X_{n+1})$ with respect to $X_0$.
Since $B(X_1,\ldots,X_{n+1})$ is irreducible, this leads to a contradiction. 
Therefore $F(X_0,\ldots,X_{n+1})$ is irreducible.

We assume that $n=1$.
Suppose $l \geq 2$.  
Let $D(X_1,X_2)$ be the coefficient of $X_0^l$ in $F_{H'}(X_0,X_1,X_2)$. 
Then $D(X_1,X_2)$ is a form of degree $N-l$.  
We take $(a_1,b_1),(a_2,b_2)\in k^2$ such that  
\[
a_1 b_2 \neq a_2 b_1\quad\text{and}\quad D(a_i,b_i)\neq 0
\] for $i=1,2$.  
Let $A(X_1,X_2)$ be a homogeneous polynomial of degree $m$ satisfying $A(a_i,b_i)\neq 0$ for $i=1,2$.  
We set
\[
F(X_0,X_1,X_2)
:= A(X_1,X_2)F_{H'}(X_0,X_1,X_2) 
+ (b_1X_1-a_1X_2)^{N+m-1}(b_2X_1-a_2X_2).
\]
If $F(X_0,X_1,X_2)$ is reducible, then 
\[
F(X_0,X_1,X_2)=Y_1(X_0,X_1,X_2)\,Y_2(X_0,X_1,X_2).
\]
where $Y_1(X_0,X_1,X_2)$ and $Y_2(X_0,X_1,X_2)$ are forms of degree at least one. 
For $i=1,2$, let $B_i(X_1,X_2)$ denote the constant term of 
$Y_i(X_0,X_1,X_2)$ with respect to the variable $X_0$.
Then
\[
(b_1X_1-a_1X_2)^{N+m-1}(b_2X_1-a_2X_2)=B_1(X_1,X_2)B_2(X_1,X_2).
\]
Hence, after renumbering if necessary, we may assume that 
\[B_1(X_1,X_2)=t(b_1X_1-a_1X_2)^M\]
and
\[B_2(X_1,X_2)=t^{-1}(b_1X_1-a_1X_2)^{N+m-M-1}(b_2X_1-a_2X_2)\]
where $t\in k^{\times}$ and $M$ is the degree of $Y_1(X_0,X_1,X_2)$.  
Consider the element
\[
h=\begin{pmatrix} 
	e_l & 0 & 0 \\ 
	0 & 1 & 0 \\ 
	0 & 0 & 1
\end{pmatrix}\in H'.
\]
The action $h^*$ sends $X_0 \mapsto e_l X_0$ and acts trivially on $X_1,X_2$.  
Thus, 
\[h^*F(X_0,X_1,X_2)=F(X_0,X_1,X_2).\]
It follows that $h^*Y_1(X_0,X_1,X_2)=sY_1(X_0,X_1,X_2)$ or $sY_2(X_0,X_1,X_2)$ for some $s\in k^{\times}$.
Since $a_1 b_2 \neq a_2 b_1$,
\[
B_1(X_1,X_2)\neq s'B_2(X_1,X_2)\quad\mr{for\ any}\ s'\in k^{\times}.
\]
Thus,
\[
h^*Y_1(X_0,X_1,X_2) = s_1Y_1(X_0,X_1,X_2)
\]
and
\[
h^*Y_2(X_0,X_1,X_2) = s_2Y_2(X_0,X_1,X_2)
\]
for some $s_1,s_2\in k^{\times}$.  
Moreover, since $h^*$ sends $X_0 \mapsto e_l X_0$ and acts trivially on $X_1,X_2$, 
 it follows that $s_i=1$, i.e. 
 \[h^*Y_i(X_0,X_1,X_2) = Y_i(X_0,X_1,X_2)\]
 for $i=1,2$.  
Hence the coefficients of $X_0^j$ in $Y_i(X_0,X_1,X_2)$ vanish whenever $j$ is not a multiple of $l$.  
From the equality $F(X_0,X_1,X_2)=Y_1(X_0,X_1,X_2)Y_2(X_0,X_1,X_2)$,
we deduce that
\[
A(X_1,X_2)D(X_1,X_2) 
= C_2(X_1,X_2)B_1(X_1,X_2)
+ C_1(X_1,X_2)B_2(X_1,X_2)
\]
where $C_i(X_1,X_2)$ is the coefficient of $X_0^l$ in $Y_i(X_0,X_1,X_2)$ for $i=1,2$.  
Substituting $(X_1,X_2)=(a_1,b_1)$, the left-hand side is nonzero while the right-hand side vanishes, which is a contradiction.  
Therefore $F(X_0,X_1,X_2)$ is irreducible.  
For the case $l=1$, define
\[
F(X_0,X_1,X_2)
:= A(X_1,X_2) F_{H'}(X_0,X_1,X_2) 
+ (b_1X_1-a_1X_2)^{N+m},
\]
where $A(a_1,b_1)\neq 0$ and $D(a_1,b_1)\neq 0$.  
Suppose for contradiction that $F(X_0,X_1,X_2)$ is reducible.  There are forms $Y_1(X_0,X_1,X_2)$ and $Y_2(X_0,X_1,X_2)$ of positive degree such that
$F(X_0,X_1,X_2)$
$=Y_1(X_0,X_1,X_2)Y_2(X_0,X_1,X_2)$.
The constant terms in $X_0$ of $Y_1(X_0,X_1,X_2)$ and $Y_2(X_0,X_1,X_2)$ are powers of $(b_1X_1-a_1X_2)$.  
Comparing the coefficients of $X_0$ after substituting $(X_1,X_2)=(a_1,b_1)$ yields a contradiction.  
Thus $F(X_0,X_1,X_2)$ is also irreducible in this case.

It follows from the above discussion that for every $m\geq0$ there exists an irreducible form $F(X_0,\ldots, X_{n+1})\in k[X_0,\ldots,X_{n+1}]_{N+m}$ such that
\[h^*F(X_0,\ldots,X_{n+1})=F(X_0,\ldots,X_{n+1})\]
 for $h\in H'$ , and $F(X_0,\ldots,X_{n+1})$ is a polynomial in $X_0$ of degree $N$.
Let $\mac Y\subset \mathbb P^{n+1}$ be an irreducible hypersurface defined by $F(X_0,\ldots,X_{n+1})=0$.
We set $P':=[1:0:\cdots:0]\in\mathbb P^{n+1}$.
Then the multiplicity of $\mac Y$ at $P'$ is $m$, and $P'$ is an extendable wild Galois point for $\mac Y$. Moreover, the Galois group of the projection $\pi_{P'}\colon \mac Y\da \mathbb P^n$ is $[Q]G[Q]^{-1}$.
Consequently, \( G \) is the Galois group of the wild Galois point $P:=[R](P')$ for the irreducible hypersurface $\mac X:=[Q](\mac Y)$ with the multiplicity $m$ at $P$.
\end{proof}

\section{Wildly ramified Galois point}
Let $X$ be a normal algebraic variety.
Let $D$ be a prime divisor, i.e. a codimension one irreducible closed subset.
Then 
\[
\mathcal{O}_{X,D} :=\varinjlim_{D \cap U \ne \emptyset} \mathcal{O}_X(U)
\]
is a discrete valuation ring
where \(\mathcal{O}_X(U)\) denotes the ring of regular functions on an open subset \(U \subset X\).  
Let \(\mathfrak{m}_D\) denote its maximal ideal, and let \(v_D\) be the corresponding discrete valuation.
Let \( f\co X \dashrightarrow Y \) be a dominant Galois rational map between normal algebraic varieties of the same dimension over \( k \). Let \( K := k(X) \), \( L := k(Y) \), and let \( G := \mathrm{Gal}(K/L) \) be the Galois group of the field extension $K/L$.
For each integer \( i \geq 0 \), we define the \emph{\textbf{\( i \)-th ramification group at the prime divisor \( D \subset X \)}} by
\[
G_i^D := \left\{ g \in G^D \mid g(x) \equiv x \bmod \mathfrak{m}_D^{i+1} \ \text{for all } x \in \mathcal{O}_{X,D} \right\},
\]
where
\[
G^D := \left\{ g \in G \mid g(\mathcal{O}_{X,D}) = \mathcal{O}_{X,D} \right\}.
\]
In this setting:
\begin{itemize}
\item \( G_0^D \) is called the \emph{\textbf{inertia group}} at \( D \),
\item \( G_1^D \subset G_0^D \) is called the \emph{\textbf{wild inertia group}} at \( D \).
\end{itemize}
We say that \( f \) is \emph{\tb{wildly ramified at \( D \)}} if \( G_1^D \ne 1 \), i.e., the wild inertia group is nontrivial. 
In particular, \( f \) is said to be \emph{\tb{wildly ramified}} if it is wildly ramified at some prime divisor on \( X \).
\begin{thm}\label{3.1}$([\ref{bio:jn}, \S10.\ \mathrm{Proposition}\ 10.2])$.
Let $k$ be an algebraically closed field of positive characteristic $p>0$.
	Let \( f\co X \dashrightarrow Y \) be a dominant Galois rational map between normal algebraic varieties of the same dimension, and let \( G\) be the Galois group of the field extension $k(X)/k(Y)$.
	Let $D$ be a prime divisor on $X$ such that $G^D\subset G$ is a non-trivial group.
	Then:
\begin{enumerate}
\item[$(1)$] The quotient group \( G_0^D / G_1^D \) is cyclic of order prime to \( p \).
\item[$(2)$] For each \( i \geq 1 \), the quotient group \( G_i^D / G_{i+1}^D \) is a finite direct product of cyclic groups of order \( p \).
\end{enumerate}
\end{thm}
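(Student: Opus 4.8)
The plan is to reduce the statement to the classical structure theory of the lower-numbering ramification filtration for a finite Galois extension of complete discretely valued fields (Serre, \emph{Local Fields}, Ch.~IV), and then to run the standard graded-piece argument. First I would localise and complete at $D$: writing $A:=\mathcal{O}_{X,D}$, with residue field $\ell:=k(D)$, the completion $\hat A$ is again a discrete valuation ring, with the same residue field and value group, a continuous $G^D$-action extending the given one, and $A/\mathfrak{m}_D^{\,i+1}=\hat A/\mathfrak{m}^{i+1}$ for every $i$; hence the groups $G_i^D$ are unchanged if $A$ is replaced by $\hat A$. So we may assume that $\mathrm{Frac}(\hat A)$ is a finite Galois extension of the complete field $\mathrm{Frac}(\hat A)^{G^D}$ with group $G^D$ and residue field $\ell$, and everything in sight is finite since $[k(X):k(Y)]<\infty$. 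This is the classical local setup, just transplanted to the geometric situation.

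Next I would pass to the graded pieces of the $\mathfrak{m}$-adic filtration. Fix a uniformiser $\pi$ of $\hat A$, set $U^{(0)}:=\hat A^{\times}$ and $U^{(j)}:=1+\mathfrak{m}^{j}$ for $j\ge 1$, and for each $i\ge 0$ define
\[
\theta_i\colon G_i^D\longrightarrow U^{(i)}/U^{(i+1)},\qquad \sigma\longmapsto \frac{\sigma(\pi)}{\pi}\ \bmod\ U^{(i+1)} .
\]
The routine points are: $\theta_i$ is well defined because $\sigma\in G_i^D$ forces $v_D(\sigma(\pi)-\pi)\ge i+1$; $\theta_i$ is a homomorphism because, writing $\tau(\pi)/\pi=1+a$ with $v_D(a)\ge i$, the defining property of $G_i^D$ gives $\sigma(a)\equiv a\pmod{\mathfrak{m}^{i+1}}$, whence $(\sigma\tau)(\pi)/\pi\equiv(\sigma(\pi)/\pi)(\tau(\pi)/\pi)\pmod{U^{(i+1)}}$; and $\ker\theta_i=G_{i+1}^D$. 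One then uses the standard isomorphisms $U^{(0)}/U^{(1)}\cong\ell^{\times}$ (via $u\mapsto\bar u$) and $U^{(j)}/U^{(j+1)}\cong(\ell,+)$ for $j\ge 1$ (via $1+a\pi^{j}\mapsto\bar a$, which is additive since $j\ge 1$).

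The two conclusions then follow formally. For $i=0$ we get an injection $G_0^D/G_1^D\hookrightarrow\ell^{\times}$; a finite subgroup of the multiplicative group of a field is cyclic, and $\ell^{\times}$ has no nontrivial element of order $p$, since $\zeta^{p}=1$ gives $(\zeta-1)^{p}=0$, hence $\zeta=1$, in characteristic $p$. This proves~(1). For $i\ge 1$ we get an injection $G_i^D/G_{i+1}^D\hookrightarrow(\ell,+)$, which is an $\mathbb{F}_p$-vector space, so the (finite) group $G_i^D/G_{i+1}^D$ is elementary abelian, i.e.\ a finite direct product of copies of $\mathbb{Z}/p\mathbb{Z}$. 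This proves~(2).

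The one genuinely delicate point I would expect is the identity $\ker\theta_i=G_{i+1}^D$: the inclusion $G_{i+1}^D\subseteq\ker\theta_i$ is immediate, but the reverse inclusion requires that a $\sigma\in G_i^D$ fixing $\pi$ modulo $\mathfrak{m}^{i+2}$ already fixes \emph{every} element of $\hat A$ modulo $\mathfrak{m}^{i+2}$, and the classical argument for this runs through the monogenicity $\hat A=\hat A_0[\pi]$ of $\hat A$ over its inertia subring $\hat A_0$, which is automatic when $\ell$ is perfect but needs care when $\ell=k(D)$ is imperfect, i.e.\ when $\dim X\ge 2$. I would handle this either by reducing to the case of curves --- restricting the cover over a general complete intersection curve $C'\subset Y$ cut out by pullbacks of ample divisors and passing through a general point of the image of $D$, so that the cover over $C'$ stays Galois with group $G$ by Bertini, a general curve $C$ above it meets $D$ transversally at a general point $\xi$ with $v_\xi(\bar a)=v_D(a)$ for $a\in\mathcal{O}_{X,D}$, whence the ramification filtration at $\xi$ coincides with $\{G_i^D\}$ while $\kappa(\xi)=k$ is algebraically closed so Serre's theory applies verbatim --- or else directly, by passing to the maximal unramified-and-tamely-ramified subextension and using the Eisenstein description of the remaining totally wildly ramified part, where a single Eisenstein uniformiser again generates the valuation ring. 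Either route reduces the imperfect-residue case to a situation in which monogenicity is available, after which the argument above goes through unchanged; since the statement is in any case classical, the reference cited in the statement may simply be invoked instead.
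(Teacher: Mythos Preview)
The paper does not give its own proof of this theorem: it is stated with a citation to Neukirch and then used as a black box in the proof of Proposition~\ref{3.2} and Theorem~\ref{3.3}. So there is no ``paper's proof'' to compare against; your write-up supplies exactly the kind of argument the citation is standing in for.

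Your sketch is the standard Serre/Neukirch argument, and the core steps (completion at $D$, the homomorphisms $\theta_i\colon G_i^D\to U^{(i)}/U^{(i+1)}$, the identifications $U^{(0)}/U^{(1)}\cong\ell^\times$ and $U^{(j)}/U^{(j+1)}\cong(\ell,+)$) are correctly stated. You are also right to flag the one point that is \emph{not} automatic in the paper's geometric setting: the equality $\ker\theta_i=G_{i+1}^D$ relies, in the classical proof, on monogenicity of $\hat A$ over its $G_0^D$-fixed subring, which in turn uses perfectness (or separability) of the residue extension---and here $\ell=k(D)$ is typically an imperfect function field once $\dim X\ge 2$. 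Your two proposed workarounds are both reasonable; the Bertini reduction to a general curve section is the cleaner of the two for this paper's purposes, since the ambient field $k$ is algebraically closed and the residue field at a closed point of a curve is then $k$ itself, putting you squarely in Serre's hypotheses. Since the paper is content to invoke the reference, you could equally well just cite Serre, \emph{Corps locaux}, Ch.~IV, Propositions~7 and~9, together with a remark that the geometric hypotheses allow the reduction to that case; your final sentence already says as much.
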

Let $X$ be an irreducible algebraic variety.
For \( g \in \mathrm{Bir}(X) \), let \( \mathrm{Dom}(g) \subset X \) denote the maximal open subset on which \( g \) is defined as a morphism, and define
\[
\mathrm{Fix}(g) := \overline{ \{ x \in \mathrm{Dom}(g) \mid g(x) = x \} },
\]
where the closure is taken in the Zariski topology on \( X \).
Let $G\subset {\rm Bir}(X)$ be a finite group, and let \( D \subset X \) be a prime divisor.
Define
\[
G_D := \{ g \in G \mid D \subset \mathrm{Fix}(g) \}.
\]
We assume that $X$ is normal.
Then the local ring $\mathcal{O}_{X,D}$ is a discrete valuation ring.
For $g\in \mathrm{Bir}(X)$, we have $g^*(\mathcal{O}_{X,D})$ if and only if $\overline{g(D\cap \mr{Dom}(g))}=D$.
With this observation, we obtain the following criterion:
\begin{pro}\label{3.2}
Let $k$ be an algebraically closed field of positive characteristic $p>0$.
Let \( f\co X \dashrightarrow Y \) be a dominant Galois rational map between normal projective varieties of the same dimension over \( k \), and let \( G\subset \mathrm{Bir}(X) \) be the Galois group of the field extension $k(X)/k(Y)$.
Let \( D \subset X \) be a prime divisor.
Then the following are equivalent:
\begin{enumerate}
	\item[$(1)$] The order of \( G_D \) is divisible by \( p \).
	\item[$(2)$] The first ramification group \( G_1^D \) at \( D \) is nontrivial.
\end{enumerate}
\end{pro}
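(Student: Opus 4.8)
The plan is to reduce the statement to a purely group-theoretic consequence of Theorem~\ref{3.1} by first establishing the identification $G_D = G_0^D$, i.e. that $G_D$ is exactly the inertia group of the extension $k(X)/k(Y)$ at $D$. Granting this, we have $|G_D| = |G_0^D| = [G_0^D : G_1^D]\cdot|G_1^D|$. By Theorem~\ref{3.1}\,(1) the index $[G_0^D : G_1^D]$ is prime to $p$, so $p$ divides $|G_D|$ if and only if $p$ divides $|G_1^D|$. Moreover $G_1^D$ is a $p$-group: the descending chain $G_1^D \supseteq G_2^D \supseteq \cdots$ is eventually constant because $G$ is finite, and $\bigcap_{i\geq 1} G_i^D = \{1\}$ since an element congruent to the identity modulo $\mathfrak{m}_D^{i+1}$ for all $i$ fixes $\mathcal{O}_{X,D}$ pointwise, hence acts trivially on $k(X) = \mathrm{Frac}(\mathcal{O}_{X,D})$ and so equals $e$; as each quotient $G_i^D/G_{i+1}^D$ is an elementary abelian $p$-group by Theorem~\ref{3.1}\,(2), the order $|G_1^D|$ is a power of $p$. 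Hence $p \mid |G_1^D|$ if and only if $G_1^D \neq 1$, and the equivalence of $(1)$ and $(2)$ follows.

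It remains to prove $G_D = G_0^D$. For $g \in G_D$ we have $D \subset \mathrm{Fix}(g)$; since $X$ is normal, the indeterminacy locus of $g$ has codimension at least two, so $D \cap \mathrm{Dom}(g)$ is dense and open in $D$, and from $D \subset \overline{\{x \in \mathrm{Dom}(g) \mid g(x)=x\}}$ together with the irreducibility of $D$ one deduces that $g$ restricts to the identity morphism on a dense open subset of $D$; in particular $\overline{g(D\cap\mathrm{Dom}(g))} = D$, so by the observation preceding the proposition $g$ preserves $\mathcal{O}_{X,D}$, i.e. $g \in G^D$. For any $g \in G^D$, that same observation shows $g$ induces a dominant self-map $g_D \colon D \dashrightarrow D$, which is birational because $g^{-1} \in G^D$ induces a two-sided inverse; under the identification $\mathrm{Bir}(D) = \mathrm{Aut}_k(k(D))$, the automorphism $g_D^{\ast}$ of $k(D)$ coincides with the automorphism induced by $g$ on the residue field $\mathcal{O}_{X,D}/\mathfrak{m}_D = k(D)$. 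Consequently $g \in G_0^D$ if and only if $g_D^{\ast} = \mathrm{id}$, i.e. $g_D = \mathrm{id}_D$, i.e. $g$ is the identity on a dense open subset of $D$; by the closure argument above this is equivalent to $D \subset \mathrm{Fix}(g)$, that is, to $g \in G_D$. Thus $G_D = G_0^D$.

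The step I expect to be the main obstacle is this identification $G_D = G_0^D$, and specifically the need to pass cleanly between the birational-geometric condition $D \subset \mathrm{Fix}(g)$ and the valuation-theoretic condition that $g$ act trivially on the residue field at $D$: this uses the codimension-$\geq 2$ bound on the indeterminacy locus provided by normality, the fact that a rational map agreeing with the identity on a dense subset of a reduced variety is the identity morphism there, and the dictionary between birational self-maps of $D$ and $k$-automorphisms of $k(D)$. Once $G_D = G_0^D$ is in place, the remainder is the elementary order computation with Theorem~\ref{3.1} carried out above.
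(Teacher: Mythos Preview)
Your proof is correct. The approach differs from the paper's in organization rather than in essential content, but the reorganization is worth noting. You first establish the clean identity $G_D = G_0^D$ and then read off the equivalence from the ramification filtration via Theorem~\ref{3.1}; the paper instead proves the two implications separately, tracking a single element of order $p$ each time. For $(1)\Rightarrow(2)$ the paper takes $g\in G_D$ of order $p$ and verifies $g^*\alpha\equiv\alpha\bmod\mathfrak{m}_D$ by an explicit computation with representatives $(f,W)$ of germs, while you obtain the same conclusion via the dictionary $\mathrm{Bir}(D)\cong\mathrm{Aut}_k(k(D))$. For $(2)\Rightarrow(1)$ the paper argues by contradiction: if some order-$p$ element $g\in G_1^D$ failed to lie in $G_D$, it would move a point $z\in D$, and projectivity supplies a regular function separating $z$ from $g(z)$, contradicting $g\in G_0^D$. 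Your route through the generic point of $D$ bypasses this point-separation step entirely; projectivity is used only implicitly, to guarantee the codimension-$2$ bound on the indeterminacy locus. The upshot is that your argument isolates the geometric input ($G_D=G_0^D$) from the group-theoretic one (Theorem~\ref{3.1}), making each step reusable, whereas the paper's argument is more self-contained and hands-on but intertwines the two.
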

\begin{proof}
We assume that the order of \( G_D \) is divisible by \( p \).
Then there exists a birational self-map $g\in G_D$ of order $p$.
Since $D\subset \mathrm{Fix}(g)$, $\ol{g(D)}=D$.
It follows that $g^*\in G^D$ and ${\rm Dom}(g)\cap D\ne\emptyset$.
Let $\alpha$ be an element of the local ring $\mathcal{O}_{X,D}$.  
We take a pair $(f,W)$ representing $\alpha$ where $W$ is an open subset of $X$ with $W\cap D\neq \emptyset$, and $f \in \mathcal{O}_X(W)$.  
	Since $X$ is a normal projective variety, there exist open subsets $U,V\subset X$ such that 
	$g\mid_U\co U\to V$ is an isomorphism, and the codimension of $X\backslash V$ is at least $2$.
Hence $V\cap D\ne\emptyset$, and therefore
	\[
	(g|_{\mathrm{Dom}(g)})^{-1}(W)\cap D \neq \emptyset.
	\]  
We set 
\[
O := (g|_{\mathrm{Dom}(g)})^{-1}(W), \quad {\rm and}\quad h := (f - f\circ g)|_O \in \mathcal{O}_X(O).
\]  
Since $\overline{g(D)} = D$, it follows that $h(x)=0$ in the stalk $\mathcal{O}_{X,x}$ for every 
	$x\in D\cap O$. Hence, the pair $(h,O)$ represents an element of the maximal ideal 
	$\mathfrak{m}_D \subset \mathcal{O}_{X,D}$.  
	Consequently, we deduce that
	\[
	g^*(\alpha) \equiv \alpha\bmod{\mathfrak{m}_D}.
	\]
Therefore, $g^* \in G^D_0$.
Since the order of $g$ is $p$, it follows that the order $G_0^D$ is divisible by $p$.
By Theorem \ref{3.1} $(1)$, $G_1^D$ is divisible by $p$.
Thus, $G_1^D$ is nontrivial.
	
We assume that the group $G_1^D$ is nontrivial.
	By Theorem \ref{3.1} $(2)$, there exists a birational self-map $g\in\rm{Bir}(X)$ of order $p$ such that
	$g^*\in G_1^D$.
	If $g\not\in G_D$, then there is a point $z\in {\rm Dom}(g)\cap D$ such that $g(z)\neq z$.
Since $X$ is projective, we can find an open subset $W\subset X$ and a regular function $f\in \mac O_X(W)$ such that $W\cap D\ne\emptyset$, 
$f(z)=0$ in $\kappa(z)$, and 
$f(g(z))\neq 0$ in $\kappa(g(z))$
where $\kappa(z)$ and $\kappa(g(z))$ are the residue fields of the local rings $\mathcal{O}_{X,z}$ and $\mathcal{O}_{X,g(z)}$, respectively.
Let \(\alpha\) be the equivalence class of the pair \((f, W)\) in the local ring $\mathcal{O}_{X,D}$.
Since $z,g(z)\in D$, 
	\[
	g^*(\alpha)\not\equiv \alpha \bmod \mathfrak{m}_D.
	\]
Hence, $g^*\not\in G_0^D$.
On the other hand, since $g^* \in G_1^D \subset G_0^D$, 
this yields a contradiction.
Thus $g \in G_D$, and therefore the order of $G_D$ is divisible by $p$.
\end{proof}
\begin{thm}\label{3.3}
Let $k$ be an algebraically closed field of positive characteristic $p>0$.
Let \( \mathcal{X} \subset \mathbb{P}^{n+1} \) be a normal hypersurface of degree \( d \) over $k$. Suppose that \( P \in \mathbb{P}^{n+1} \) is an extendable Galois point for \( \mathcal{X} \), with multiplicity \( m \) at \( P \).
If \( d - m \) is a multiple of \( p \), then the projection \( \pi_P \colon \mathcal{X} \dashrightarrow \mathbb{P}^n \) is wildly ramified.
\end{thm}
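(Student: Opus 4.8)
The strategy is to apply Proposition~\ref{3.2}: it suffices to produce a single prime divisor $D \subset \mathcal{X}$ for which the fixed-locus group $(G_{\pi_P})_D := \{g \in G_{\pi_P} \mid D \subset \mathrm{Fix}(g)\}$ has order divisible by $p$; then the first ramification group $(G_{\pi_P})_1^D$ at $D$ is nontrivial and $\pi_P$ is wildly ramified at $D$. Thus the problem reduces to finding an automorphism of $\mathcal{X}$ of order $p$ lying in $G_{\pi_P}$ whose fixed locus on $\mathcal{X}$ contains a divisor.

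First I would produce the order-$p$ automorphism. Writing $d - m = p^u l$ with $\gcd(p,l) = 1$, the hypothesis $p \mid d - m$ forces $u \geq 1$, so by Theorem~\ref{4.1} (equivalently, by Cauchy's theorem applied to $|G_{\pi_P}| = d - m$) the group $G_{\pi_P}$ contains an element $g$ of order $p$. After a change of coordinates with $P = [1:0:\cdots:0]$, Lemma~\ref{2.1} together with Lemma~\ref{4.0}~(2) lets us write $g = [A]$ with $A \in \mathrm{UT}(*, I_{n+1})$ whose $(1,1)$-entry equals $1$; hence $A = I_{n+2} + N$ where $N := A - I_{n+2}$ has every row but the first equal to zero and first row $(0, b_1, \dots, b_{n+1}) \neq 0$, because $g \neq e$. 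So $N$ has rank one, $\ker(A - I_{n+2})$ is a hyperplane of $k^{n+2}$ containing $e_1$, and (as $A$ is unipotent, with $1$ its only eigenvalue) the fixed locus of $g$ acting on $\mathbb{P}^{n+1}$ is the projectivization of $\ker(A - I_{n+2})$, namely the hyperplane $H = \{[X_0:\cdots:X_{n+1}] \mid b_1 X_1 + \cdots + b_{n+1} X_{n+1} = 0\} \cong \mathbb{P}^n$, which passes through $P$.

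Next I would intersect $H$ with $\mathcal{X}$. Since $g$ is induced by an element of $\mathrm{PGL}(n+2,k)$ preserving $\mathcal{X}$, it is a morphism on all of $\mathcal{X}$, so $\mathrm{Fix}(g) = H \cap \mathcal{X}$. As $\mathcal{X}$ is irreducible of degree $d \geq 3$, it is not contained in $H$, so the linear form $b_1 X_1 + \cdots + b_{n+1} X_{n+1}$ restricts to a nonzero non-unit of the (integral) homogeneous coordinate ring of $\mathcal{X}$; hence $H \cap \mathcal{X}$ is a nonempty closed subset of pure codimension one in $\mathcal{X}$, the nonemptiness being the projective dimension theorem. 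Taking $D$ to be any irreducible component of $H \cap \mathcal{X}$, we obtain a prime divisor $D$ on the normal variety $\mathcal{X}$ with $D \subset \mathrm{Fix}(g)$, so that $g \in (G_{\pi_P})_D$ and $p \mid |(G_{\pi_P})_D|$.

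Finally I would invoke Proposition~\ref{3.2} for the dominant Galois rational map $\pi_P \colon \mathcal{X} \dashrightarrow \mathbb{P}^n$ between the normal projective varieties $\mathcal{X}$ and $\mathbb{P}^n$ of common dimension $n$, with Galois group $G_{\pi_P}$: because $p \mid |(G_{\pi_P})_D|$, the first ramification group $(G_{\pi_P})_1^D$ is nontrivial, so $\pi_P$ is wildly ramified at $D$, and therefore wildly ramified. I do not expect a real obstacle, since Proposition~\ref{3.2} and Theorem~\ref{4.1} carry essentially all the weight; the points requiring care are the identification of $\mathrm{Fix}(g)$ with the hyperplane $\ker(A - I_{n+2})$ and the verification that this hyperplane meets the irreducible hypersurface $\mathcal{X}$ of degree at least $3$ in a genuine divisor rather than in all of $\mathcal{X}$ or in something of smaller dimension.
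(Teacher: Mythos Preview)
Your proposal is correct and follows essentially the same route as the paper: pick an order-$p$ element $g\in G_{\pi_P}$, lift it via Lemma~\ref{2.1} and Lemma~\ref{4.0}(2) to a unipotent $A\in\mathrm{UT}(*,I_{n+1})$, identify its fixed hyperplane $H=\mathbb{P}(\ker(A-I_{n+2}))$, take an irreducible component $D$ of $\mathcal{X}\cap H$, and apply Proposition~\ref{3.2}. The paper only asserts $\mathcal{X}\cap H\subset\mathrm{Fix}(g)$ whereas you observe the stronger (and correct) equality $\mathrm{Fix}(g)=\mathcal{X}\cap H$, and you are slightly more explicit about why $\mathcal{X}\not\subset H$ and why the intersection has pure codimension one; otherwise the arguments coincide.
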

\begin{proof}
Let $G_{\pi_P}\subset \rm{Bir}(\mathcal X)$ be the group defined in $(\ref{1})$.
Then $G_{\pi_P}$ is the Galois group of $\pi_P$ such that $|G_{\pi_P}|=d-m$.
Since $P$ is extendable,  $G_{\pi_P}\subset \mathrm{PGL}(n+2,k)$.
Since $p$ divides $d-m$,
	there exists an automorphism $g\in G_{\pi_P}$ of order $p$.
	By replacing the coordinate system if necessary, we may assume that $P = [1 : 0 : \cdots : 0]$.
	By Lemma~\ref{2.1}, there exists a matrix $A=\begin{pmatrix} 
		1 & a_{12} & \dots  & a_{1n+2} \\
		0 & 1 & \dots  & 0\\
		\vdots & \vdots & \ddots & \vdots \\
		0 & 0& \dots  & 1
	\end{pmatrix}\in\mr{UT}(*,I_{n+1})$ such that  $g=[A]$.
	Let $\mac H\subset \mb P^{n+1}$ be the hypersurface defined by 
	\[\sum_{i=2}^{n+2}a_{1i}X_{i-1}=0.\]
	This hypersurface $\mathcal H$ is the projective subspace determined by the eigenspace of the matrix \( A \) corresponding to the eigenvalue \( 1 \).
	Then $\mac X\cap \mac H\subset\mr{Fix}(g)$.
Since $\mathcal X$ and $\mathcal H$ are integral (i.e.\ irreducible and reduced)  hypersurfaces in $\mathbb P^{n+1}$, 
$\mathcal X\cap \mathcal H$ has codimension $2$ in $\mathbb P^{n+1}$.
Thus, there exists an irreducible component $D$ of $\mac X\cap \mac H$ with codimension one in $\mathcal X$.
Then $D$ is a prime divisor on $\mac X$ such that $g\in G_D$.
	By Proposition \ref{3.2}, $\pi_P$ is wildly ramified at $D$, and hence $\pi_P$ is wildly ramified.
\end{proof}
One of the central problems in positive characteristic geometry is to determine which finite groups can appear as Galois groups of branched covers. 
Abhyankar formulated a celebrated conjecture concerning the structure of Galois covers in positive characteristic, which was later proved independently by Raynaud $([\ref{bio:ra94},\ \mathrm{Corollary}\ 6.5.3])$ and Harbater $([\ref{bio:ha94},\ \mathrm{Theorem}\  6.2])$. 
\begin{thm}$(\mathrm{Abhyankar's\ Conjecture}\ [\ref{bio:ab57},\ \mathrm{Conjecture}\ 1])$.
Let $G$ be a finite group, and let $p(G)\subset G$ be the subgroup generated by all Sylow $p$-subgroups of $G$.  
Let $X$ be a smooth proper connected curve of genus $g_X$ defined over an algebraically closed field of characteristic $p$, and let $S\subset X$ be a finite subset of cardinality $r_X \ge 1$. Then there exists a $G$-Galois cover 
\[
\varphi \colon Y \to X
\]
branched only at $S$ if and only if the quotient group $G/p(G)$ can be generated by at most $2g_X + r_X - 1$ elements.
\end{thm}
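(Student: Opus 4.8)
The plan is to prove the two implications by entirely different means: the \emph{only if} direction is elementary, while the \emph{if} direction is the deep theorem of Raynaud and Harbater. For \emph{only if}, suppose $\varphi\co Y\to X$ is a connected $G$-Galois cover branched only at $S$, set $N:=p(G)$, and pass to the quotient $\psi\co Z:=Y/N\to X$, a connected $(G/N)$-Galois cover still branched only at $S$ (over $X\setminus S$ the action of $G$, hence of $N$, is free, so $Z\to X$ is étale there). Since $N$ contains every Sylow $p$-subgroup of $G$, the order of $G/N$ is prime to $p$; as the wild inertia group at any prime divisor is a $p$-group by Theorem~\ref{3.1}~(2), the cover $\psi$ has no wild ramification, i.e.\ $\psi$ is tame, so $G/N$ is a finite quotient of $\pi_1^{\mathrm{tame}}(X\setminus S)$. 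Because $S\neq\emptyset$ the curve $X\setminus S$ is affine, so by Grothendieck's specialization theorem $\pi_1^{\mathrm{tame}}(X\setminus S)$ is a quotient of the free profinite group of rank $2g_X+r_X-1$ (the profinite completion of the topological fundamental group of the corresponding punctured Riemann surface). Therefore $G/p(G)$ is generated by at most $2g_X+r_X-1$ elements.

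For the \emph{if} direction I would follow the patching strategy. The first reduction is to the case $G=p(G)$, i.e.\ $G$ \emph{quasi-$p$}: choosing lifts $g_1,\dots,g_s\in G$ (with $s\le 2g_X+r_X-1$) of a generating set of $G/p(G)$, so that $G=\langle p(G),g_1,\dots,g_s\rangle$, one realizes a tame cover of $X$ carrying the prime-to-$p$ part (possible since, by the first paragraph, the tame fundamental group of $X\setminus S$ surjects onto any group generated by $2g_X+r_X-1$ elements of prime-to-$p$ order) together with a quasi-$p$ cover carrying $p(G)$, and amalgamates the two by formal patching over the completed local ring at a chosen point so that the glued cover has Galois group $G$. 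The second, essential, step is to realize an arbitrary quasi-$p$ group as a Galois group over $(X,S)$. Here one argues by induction on $|G|$: if $G=\langle H_1,\dots,H_r\rangle$ for proper quasi-$p$ subgroups $H_i$, one realizes an $H_i$-cover for each $i$ and glues them along a Harbater--Mumford degeneration of $X$ into a tree of copies of $\mathbb{P}^1$, using formal patching over $k[[t]]$ and formal GAGA; the base cases are the $p$-groups (handled via Artin--Schreier--Witt covers and the freeness of the maximal pro-$p$ quotient of $\pi_1(\mathbb{A}^1)$) and the ``indecomposable'' quasi-$p$ groups, which in the original situation $X=\mathbb{P}^1$, $S=\{\infty\}$ are produced by Raynaud's rigid-analytic construction on a semistable model of $\mathbb{P}^1$.

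The hard part is precisely this second step. Patching a tame cover onto a given quasi-$p$ cover is essentially formal once the correct local models are available, but \emph{producing} quasi-$p$ covers with exactly the prescribed branch locus is deep: in Raynaud's approach one first builds an auxiliary cover of a semistable model carrying extra branch points and then runs a delicate analysis of its stable reduction and vanishing cycles to show the extra branching can be absorbed and the cover descended to $\mathbb{A}^1$; in Harbater's approach one must control the inertia along the components of the degenerate fibre and carry out a thickening argument over $k[[t]]$. In either route the combinatorial core is the group theory: classifying which quasi-$p$ groups fail to be generated by proper quasi-$p$ subgroups, and treating those minimal cases (the $p$-groups, and examples such as $\mathrm{PSL}_2(\mathbb{F}_p)$). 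The numerical bound $2g_X+r_X-1$ enters only through the tame piece, via the generator count for $\pi_1^{\mathrm{tame}}(X\setminus S)$ from the first paragraph; Serre's earlier proof of the solvable case is recovered as the sub-case in which the induction stays within the solvable quasi-$p$ groups.
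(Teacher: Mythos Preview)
The paper does not prove this statement. It is quoted as Abhyankar's Conjecture, with the remark immediately preceding it that the conjecture ``was later proved independently by Raynaud $([\ref{bio:ra94}])$ and Harbater $([\ref{bio:ha94}])$''; no argument is given in the paper, and the theorem serves only as motivation for the open question that follows. So there is no ``paper's own proof'' to compare against.

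What you have written is a faithful high-level roadmap of the Raynaud--Harbater proof rather than a proof. Your \emph{only if} paragraph is essentially correct: passing to $Y/p(G)$ gives a prime-to-$p$, hence tame, cover, and Grothendieck's specialization theorem bounds the number of generators of any finite quotient of $\pi_1^{\mathrm{tame}}(X\setminus S)$ by $2g_X+r_X-1$. Your \emph{if} sketch names the right ingredients (reduction to quasi-$p$, formal patching, Artin--Schreier--Witt for $p$-groups, Raynaud's rigid-analytic construction for the indecomposable quasi-$p$ case), and you are candid that the substance lies in the quasi-$p$ step. Two places where the sketch is a bit loose: the reduction from $G$ to the quasi-$p$ case is not literally ``realize a tame $G/p(G)$-cover, realize a $p(G)$-cover, and glue''---one has to produce a $G$-cover whose quotient by $p(G)$ is the given tame cover, which in Harbater's argument goes through an induction and a more careful patching over a degeneration of $X$; and the phrase ``any group generated by $2g_X+r_X-1$ elements of prime-to-$p$ order'' should read ``any prime-to-$p$ group generated by $2g_X+r_X-1$ elements.'' None of this is a fatal error in an outline, but you should be aware that what you have is a plan, not a proof, and the paper itself makes no claim to prove the result.
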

In particular, when $X=\mathbb{P}^1$ and $S$ consists of at least two points, this result characterizes all finite groups $G$ that can occur as Galois groups of covers of $\mathbb{P}^1$ branched only at $S$. This theorem plays a fundamental role in understanding which finite groups can be realized as Galois groups of branched covers of curves in positive characteristic. Motivated by this, we raise the following question in the context of hypersurfaces and Galois points.
\begin{que}
Let $k$ be an algebraically closed field of positive characteristic $p>0$.
	Let $G \cong \mathbb{Z}/p\mathbb{Z}^{\oplus u} \rtimes \mathbb{Z}/l\mathbb{Z}$ where $\gcd(p,l)=1$ and $l$ divides $p^u-1$.  
	Does there exist a smooth hypersurface $\mathcal{X}\subset \mb P^{n+1}$ over $k$ with a Galois point $P\in\mathbb P^{n+1}$ such that the Galois group of the projection
	\[
	\pi_P \colon \mathcal{X} \dashrightarrow \mathbb{P}^n
	\]
	is isomorphic to $G$?
\end{que}
In this paper we do not assume that $\mathcal{X}$ is smooth, so the multiplicity $m$ of $\mathcal{X}$ at $P$ may take various values. 
On the other hand, if $\mathcal{X}$ is smooth, then the multiplicity $m$ of $\mathcal{X}$ at any point is necessarily either $0$ or $1$. 
Therefore, in the smooth case, it suffices to restrict our consideration to the cases $m=0$ and $m=1$.

\end{document}